\DeclareMathAlphabet\mathbfcal{LS2}{stixcal}{b}{n}
\numberwithin{equation}{section}
\theoremstyle{plain}
\newtheorem{theorem}{Theorem}[section]
\newtheorem{proposition}[theorem]{Proposition}
\newtheorem{lemma}[theorem]{Lemma}
\newtheorem{corollary}[theorem]{Corollary}
\theoremstyle{definition}
\newtheorem{remark}[theorem]{Remark}
\newtheorem{example}[theorem]{Example}
\newcommand\R{\mathbb{R}}
\newcommand\N{\mathbb{N}}
\newcommand\eps{\varepsilon}
\newcommand\tuple{{\mathcal B}}
\newcommand\excess{{\mathcal{E}}}
\renewcommand{\mod}{\bmod}
\setlist{itemsep=2pt}
\begin{document}

\title{Decomposing a factorial into large factors}

\author[Alexeev]{Boris Alexeev}
\address{Unaffiliated, Athens, GA 30605.}
\email{boris.alexeev@gmail.com}

\author[Conway]{Evan Conway}
\address{UVA Department of Mathematics, Charlottesville, VA 22903.}
\email{auj4kq@virginia.edu}

\author[Rosenfeld]{Matthieu Rosenfeld}
\address{LIRMM, Univ Montpellier, CNRS, Montpellier, France.}
\email{matthieu.rosenfeld@umontpellier.fr}

\author[Sutherland]{Andrew V. Sutherland}
\address{MIT Department of Mathematics, Cambridge, MA 02139.}
\email{drew@math.mit.edu}

\author[Tao]{\\Terence Tao}
\address{UCLA Department of Mathematics, Los Angeles, CA 90095-1555.}
\email{tao@math.ucla.edu}

\author[Uhr]{Markus Uhr}
\address{Unaffiliated, Zurich, Switzerland.}
\email{uhrmar@gmail.com}

\author[Ventullo]{Kevin Ventullo}
\address{Google, Mountain View, CA.}
\email{kevinventullo@google.com}

\subjclass[2020]{11A51}

\begin{abstract}  Let $t(N)$ denote the largest number such that $N!$ can be expressed as the product of $N$ integers greater than or equal to $t(N)$.
The bound $t(N)/N = 1/e-o(1)$ was apparently established in unpublished work of Erd\H{o}s, Selfridge, and Straus; but the proof is lost.  Here we obtain the more precise asymptotic
$$ \frac{t(N)}{N} = \frac{1}{e} - \frac{c_0}{\log N} + O\left( \frac{1}{\log^{1+c} N} \right)$$
for an explicit constant $c_0 = 0.30441901\dots$ and some absolute constant $c>0$, answering a question of Erd\H{o}s and Graham.  For the upper bound, a further lower order term in the asymptotic expansion is also obtained.  With computer assistance, we obtain highly precise computations of $t(N)$ for wide ranges of $N$, establishing several explicit conjectures of Guy and Selfridge on this sequence.  For instance, we show that $t(N) \geq N/3$ for $N \geq 43632$, with the threshold shown to be best possible.
\end{abstract}

\maketitle


\section{Introduction}

Given a natural number $M$, define a \emph{factorization} of $M$ to be a finite multiset $\tuple$ of natural numbers with product
$$ \prod \tuple \coloneqq \prod_{a \in \tuple} a = M,$$
where each $a$ appears according to its multiplicity in $\tuple$. More generally, define a \emph{subfactorization} of $M$ to be a finite multiset $\tuple$ such that $\prod \tuple$ divides $M$.  Given a threshold $t$, we say that a multiset $\tuple$ is \emph{$t$-admissible} if $a \geq t$ for all $a \in \tuple$.  For a given natural number $N$, we then define $t(N)$ to be the largest $t$ for which there exists a $t$-admissible factorization $\tuple$ of $N!$ of cardinality $|\tuple|=N$.

\begin{example}\label{nine}  The multiset
  $$ \{ 3,3,3,3,4,4,5,7,8\}$$
  is a $3$-admissible factorization of
$$ \prod \{ 3,3,3,3,4,4,5,7,8\} = 3^4 \times 4^2 \times 5 \times 7 \times 8 = 9!$$
of cardinality
$$|\{ 3,3,3,3,4,4,5,7,8\}| = 9,$$
 hence $t(9) \geq 3$.  One can check that no $4$-admissible factorization of $9!$ of this cardinality exists, hence $t(9) = 3$.
\end{example}

It is easy to see that $t(N)$ is non-decreasing in $N$ (any cardinality $N$ factorization of $N!$ can be extended to a cardinality $N+1$ factorization of $(N+1)!$ by adding $N+1$ to the multiset).  The first few elements of the sequence $t(N)$ are
$$ 1,1,1,2,2,2,2,2,3,3,3,3,3,4, \dots$$
(\href{https://oeis.org/A034258}{OEIS A034258}). The values of $t(N)$ for $N \leq 79$ were computed in \cite{guy-selfridge}, and the values for $N \leq 200$ can be extracted from \href{https://oeis.org/A034259}{OEIS A034259}, which describes the inverse sequence to $t$.  As part of our work, we extend this sequence to $N \leq 10^4$; see \cite{github} and \Cref{fig-long}.

When the factorial $N!$ is replaced with an arbitrary number the problem of determining $t(N)$ is essentially the bin covering problem, which is known to be NP-hard; see e.g., \cite{bincover}.  However, as we shall see in this paper, the special structure of the factorial (and in particular, the profusion of factors at the ``tiny primes'' $2,3$) make it possible to estimate $t(N)$ with very high precision.  For instance, we are able to show (see \Cref{long-table}) that
$$ 0 \leq t(9 \times 10^8) - \num{316560601} \leq 113.$$

\begin{remark}\label{subfac}  One can equivalently define $t(N)$ as the greatest $t$ for which there exists a $t$-admissible \emph{subfactorization} of $N!$ of cardinality \emph{at least} $N$.  This is because every such subfactorization can be converted into a $t$-admissible factorization of cardinality exactly $N$ by first deleting elements from the subfactorization to make the cardinality $N$, and then multiplying one of the elements of the subfactorization by a natural number to upgrade the subfactorization to a factorization.  This ``relaxed'' formulation of the problem turns out to be more convenient both for theoretical analysis of $t(N)$ and for numerical computations.
\end{remark}

By combining the obvious lower bound
\begin{equation}\label{obvious}
 \prod \tuple \geq t^{|\tuple|}
\end{equation}
for any $t$-admissible multiset $\tuple$ with Stirling's formula \eqref{stirling}, we obtain the trivial upper bound
\begin{equation}\label{trivial} \frac{t(N)}{N} \leq \frac{(N!)^{1/N}}{N} = \frac{1}{e} + O\left( \frac{\log N}{N}\right);
\end{equation}
see \Cref{fig1}.  In \cite[p.75]{erdos-graham} it was reported that an unpublished work of Erd\H{o}s, Selfridge, and Straus established the asymptotic
\begin{equation}\label{t1}
  \frac{t(N)}{N} = \frac{1}{e} + o(1)
\end{equation}
(first conjectured in \cite{erdos-71}) and asked if one could show the bound
\begin{equation}\label{Tbound}
   \frac{t(N)}{N} \leq \frac{1}{e} - \frac{c}{\log N}
\end{equation}
for some constant $c>0$ and sufficiently large $N$ (see \cite[Section B22, p.\ 122--123]{guy} and problem {\#}391 in \url{https://www.erdosproblems.com}); it was also noted that similar results were obtained in \cite{algr77} if one restricted the $a_i$ to be prime powers.  However, as later reported in \cite{erdos-96}, Erd\H{o}s ``believed that Straus had written up our proof [of \eqref{t1}]. Unfortunately Straus suddenly died and no trace was ever found of his notes. Furthermore, we never could reconstruct our proof, so our assertion now can be called only a conjecture''.   In \cite{guy-selfridge} it was observed that the lower bound $\frac{t(N)}{N} \geq \frac{3}{16}-o(1)$ could be obtained by rearranging powers of $2$ in the standard factorization $N! = \prod \{1,\dots,N\}$, i.e., by removing some powers of $2$ from some of the terms and redistributing them to other terms.  It was also claimed that this bound could be improved to $\frac{t(N)}{N} \geq \frac{1}{4}$ for sufficiently large $N$ by rearranging powers of $2$ and $3$, however we have found surprisingly that this is not the case; see \Cref{main}(v) below.

The following conjectures in \cite{guy-selfridge} were also made:
\begin{enumerate}
\item One has $t(N) \leq N/e$ for $N \neq 1,2,4$.
\item One has $t(N) \geq \lfloor 2N/7 \rfloor$ for $N \neq 56$.
\item One has $t(N) \geq N/3$ for $N \geq 3 \times 10^5$.  (It was also asked if the threshold $3 \times 10^5$ could be lowered.)
\end{enumerate}
The second and third conjectures also appear in \cite{guy}.

In this paper we answer all of these questions.

\begin{theorem}[Main theorem]\label{main} Let $N$ be a natural number.
\begin{itemize}
\item[(i)] If $N \neq 1,2,4$, then $t(N) \leq N/e$.
\item[(ii)]  If $N \neq 56$, then $t(N) \geq \lfloor 2N/7 \rfloor$.  Moreover, this can be achieved by rearranging only the prime factors $2,3,5,7$.
\item[(iii)]  If $N \geq 43632$, then $t(N) \geq N/3$.  The threshold $43632$ is best possible.
\item[(iv)]  For large $N$, one has
  \begin{equation}\label{asym}
    \frac{t(N)}{N} = \frac{1}{e} - \frac{c_0}{\log N} + O\left( \frac{1}{\log^{1+c} N} \right)
  \end{equation}
for some constant $c>0$, where $c_0$ is the explicit constant
\begin{equation}\label{c0-def}
  \begin{split}
  c_0 &\coloneqq \frac{1}{e} \int_0^1 f_e(x)\ dx \\
  &= 0.30441901\dots
\end{split}
\end{equation}
and for any $\alpha>0$, $f_\alpha \colon (0,\infty) \to \R$ denotes the piecewise smooth function
\begin{equation}\label{falpha-def}
  f_\alpha(x) \coloneqq \left\lfloor \frac{1}{x} \right\rfloor \log \frac{\lceil \nicefrac{1}{\alpha x} \rceil}{\nicefrac{1}{\alpha x}}.
\end{equation}
In particular, \eqref{t1} and \eqref{Tbound} hold.
\item[(v)]  The largest $N$ for which one can demonstrate $t(N) \geq N/4$ purely by rearranging powers of $2$ and $3$ in the standard factorization $N! = \prod \{1,\dots,N\}$ is $26244$.
\end{itemize}
\end{theorem}

\begin{remark} In fact the upper bound \eqref{Tbound} can be sharpened to
\begin{equation}\label{tna}
  \frac{t(N)}{N} \leq \frac{1}{e} - \frac{c_0}{\log N} - \frac{c_1+o(1)}{\log^2 N}
\end{equation}
for an explicit constant $c_1=0.75554808\dots$; see \Cref{upper-bound}.
\end{remark}

\begin{figure}[hbp!]
  \centering
  \includegraphics[width=0.8\textwidth]{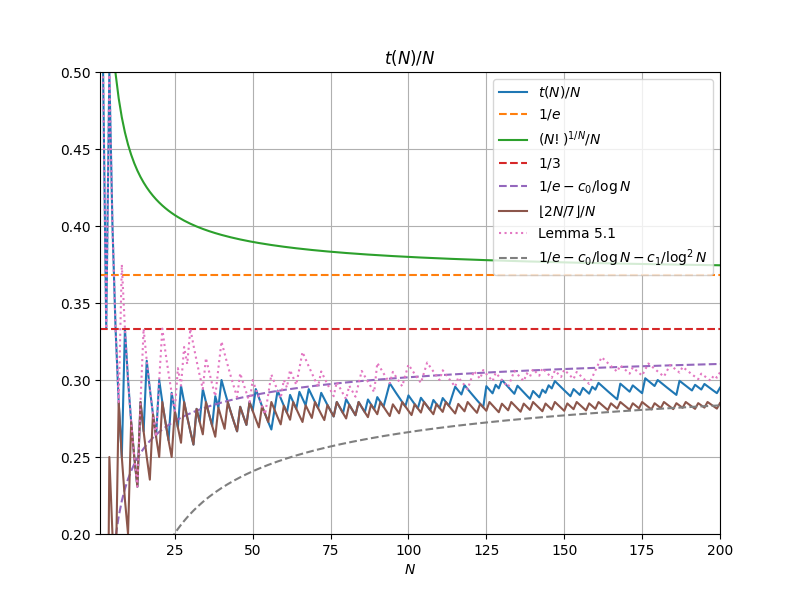}
  \vspace{-8pt}
  \caption{The function $t(N)/N$ (blue) for $N \leq 200$, using the data from \href{https://oeis.org/A034258}{OEIS A034258}, as well as the trivial upper bound $(N!)^{1/N}/N$ (green), the improved upper bound from \Cref{upper-crit} (pink), which is asymptotic to \eqref{asym} (purple), and the function $\lfloor 2N/7 \rfloor/N$ (brown), which we show to be a lower bound for $N \neq 56$.  \Cref{main}(iv) implies that $t(N)/N$ is asymptotic to \eqref{asym} (purple), which in turn converges to $1/e$ (orange), although we believe that \eqref{tna} (gray) asymptotically becomes a sharper approximation.  The threshold $1/3$ (red) is permanently crossed at $N=43632$.
  }\label{fig1}
  \end{figure}

  \begin{figure}
    \centering
    \includegraphics[width=0.8\textwidth]{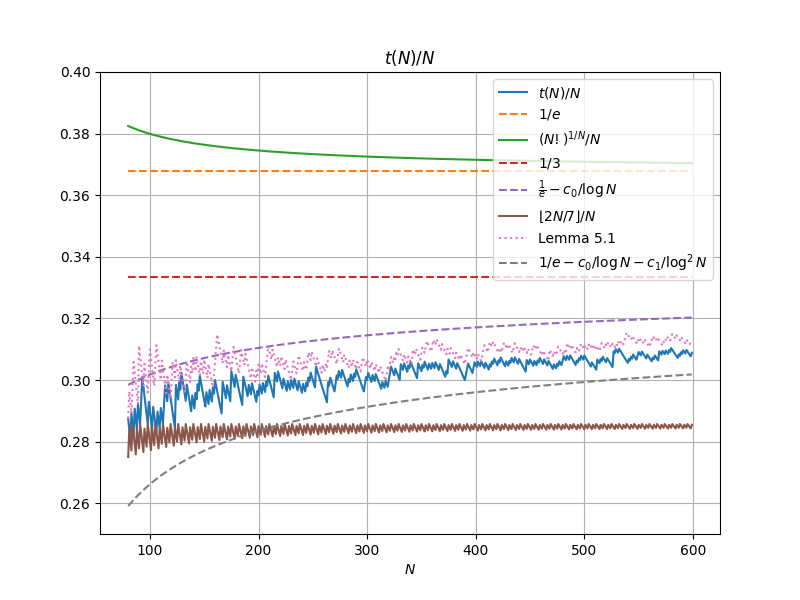}
    \vspace{-8pt}
    \caption{A continuation of \Cref{fig1} to the region $80 \leq N \leq 599$. }\label{fig1-alt}
  \end{figure}

\begin{figure}
  \centering
  \includegraphics[width=0.8\textwidth]{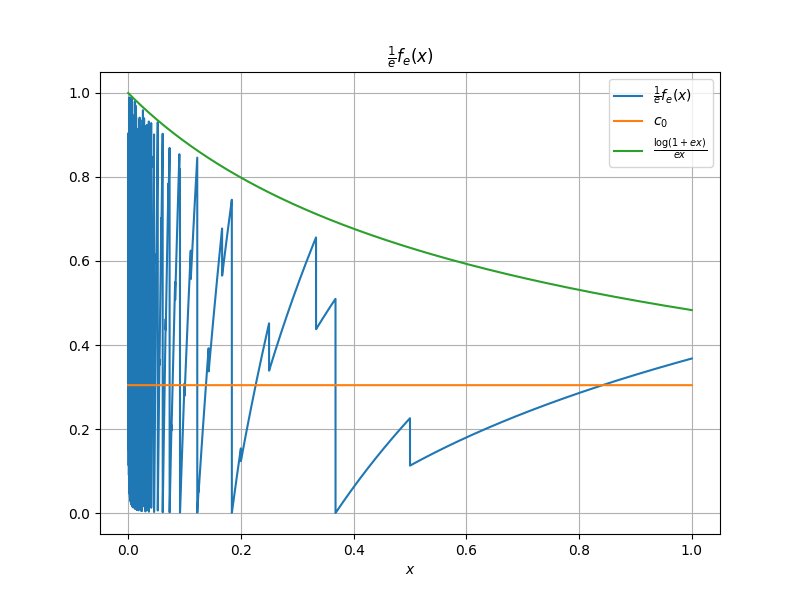}
  \caption{The piecewise continuous function $x\mapsto \frac{1}{e} f_e(x)$, together with its mean value $c_0 = 0.30441901\dots$ and the upper bound $\nicefrac{\log(1+ex)}{ex}$.  The function exhibits an oscillatory singularity at $x=0$ similar to $\sin \nicefrac{1}{x}$ (but it is always nonnegative and bounded). Informally, the function $f_e$ quantifies the difficulty that large primes in the factorization of $N!$ have in becoming only slightly larger than $N/e$ after multiplying by a natural number.}\label{fig-mean}
\end{figure}

For future reference, we observe the simple bounds
\begin{equation}\label{falpha-bound}
   0 \leq f_\alpha(x) \leq \frac{1}{x} \log \frac{\nicefrac{1}{\alpha x}+1}{\nicefrac{1}{\alpha x}}
= \frac{1}{x} \log\left( 1 + \alpha x \right) \leq \alpha
\end{equation}
for all $x>0$; in particular, $f_\alpha$ is a bounded function.  It however has an oscillating singularity at $x=0$; see \Cref{fig-mean}.

In \Cref{c0-app} we give some details on the numerical computation of the constant $c_0$.

\begin{remark}\label{old} In a previous version \cite{tao} of this manuscript, the weaker bounds
  $$ \frac{1}{e} - \frac{O(1)}{\log N} \leq \frac{t(N)}{N} \leq \frac{1}{e} - \frac{c_0+o(1)}{\log N}$$
were established, which were enough to recover \eqref{t1}, \eqref{Tbound}, and \Cref{main}(i). Numerically, the upper bound in \eqref{tna} appears to be a rather good approximation, and we conjecture that it is a lower bound as well.
\end{remark}

As one might expect, the proof of \Cref{main} proceeds by a combination of both theoretical analysis and numerical calculations.  Our main tools to obtain upper and lower bounds on $t(N)$ can be summarized as follows (and in \Cref{cases-table}):

\begin{itemize}
  \item In \Cref{greedy-sec}, we discuss \emph{greedy algorithms} to construct subfactorizations, that provide quickly computable, though suboptimal, lower bounds on $t(N)$ for small, medium, and moderately large values.
  \item In \Cref{linprog-sec}, we present a \emph{linear programming} (and \emph{integer programming}) method that provides quite accurate upper and lower bounds on $t(N)$ for small and medium values of $N$, and which we apply in \Cref{upper-sec} to establish a general upper bound (\Cref{upper-crit}) on $t(N)$ that can be used to obtain \Cref{main}(i).
  \item In \Cref{rearrange-sec}, we extend the \emph{rearrangement approach} from \cite{guy-selfridge} to give computer-assisted proofs of \Cref{main}(ii), \Cref{main}(iii) for sufficiently large $N$, and \Cref{main}(v).  We also give an analytic proof of \eqref{t1}.
  \item In \Cref{accounting-sec}, we introduce an \emph{accounting equation} linking the ``$t$-excess'' of a subfactorization with its ``$p$-surpluses'' at various primes, which provides an alternate proof of \Cref{upper-crit}, and also is the starting point for the modified factorization technique discussed below.
  \item In \Cref{approx-sec}, we give \emph{modified approximate factorization} strategy, which provides lower bounds on $t(N)$, that become asymptotically quite efficient.
\end{itemize}

\begin{table}[ht]
  \centering
  \begin{tabular}{|l|r@{}c@{}l|l|}
  \hline
\rule{0pt}{11pt}  Part & \multicolumn{3}{c|}{Range of $N$} & Method used \\
  \hline
\rule{0pt}{12pt}(i) & & $N$ & ${}\leq 10^4$ & Linear programming \\
 & $80<{}$ &$N$ & & \Cref{upper-crit} \\
\hline
\rule{0pt}{12pt}(ii) & & $N$ & ${}\leq 1.2\times 10^7$ & Integer programming \\
 & $8.2\times 10^6 \leq{}$ & $N$ & & Rearrangement \\
\hline
\rule{0pt}{12pt}(iii) & & $N$ & ${}\leq 8 \times 10^4$ & Integer programming \\
 & $\num{67425} \leq{}$ & $N$ & ${}\leq 10^{14}$ & Greedy  \\
 & $10^{11} \leq{}$ & $N$ & & Modified approximate factorization \\
 & \multicolumn{3}{c|}{$N$ sufficiently large} & Rearrangement \\
\hline
\rule{0pt}{12pt}(iv) (upper) & \multicolumn{3}{c|}{$N$ sufficiently large} & \Cref{upper-crit} \\
\phantom{(iv)} (lower) & \multicolumn{3}{c|}{$N$ sufficiently large} & Modified approximate factorization \\
\hline
\rule{0pt}{12pt}(v) & & $N$ & ${}\leq 5 \times 10^6$ & Linear (or dynamic) programming \\
 & $1.4\times 10^6 \leq{}$ & $N$ & & Rearrangement \\
  \hline
\end{tabular}
\bigskip

\caption{The techniques in this paper can establish the various components of \Cref{main}, for various overlapping  ranges of $N$.}\label{cases-table}
\end{table}

The final approach is significantly more complicated than the other four, but gives the most efficient lower bounds in the asymptotic limit $N \to \infty$.  The key idea is to start with an approximate factorization
\begin{equation}\label{approx_factor}
   N! \approx \left(\prod_{j \in I} j\right)^A
\end{equation}
for some relatively small natural number $A$ (e.g., $A = \lfloor \log^2 N \rfloor$) and a suitable set $I$ of natural numbers greater than or equal to $t$; there is some freedom to select parameters here, and we will take $I$ to be the natural numbers in $(t, t(1+\sigma)]$ that are $3$-rough (coprime to $6$), where $t$ is the target lower bound for $t(N)$ we wish to establish, and $\sigma \coloneqq \frac{3N}{tA}$ is chosen to bring the number of terms in the approximate factorization close to $N$.  With this choice of $I$, the product in \eqref{approx_factor} contains approximately the right number of copies of $p$ for medium-sized primes $p$; but it has the ``wrong'' number of copies of large primes, and is also constructed to avoid the ``tiny'' primes $p=2,3$.  One then performs a number of alterations to this approximate factorization to correct for the ``surpluses'' or ``deficits'' at various primes $p>3$, using the supply of available tiny primes $p=2,3$ as a sort of ``liquidity pool'' to efficiently reallocate primes in the factorization.  A key point will be that the incommensurability of $\log 2$ and $\log 3$ (i.e., the irrationality of $\log 3/\log 2$) means that the $3$-smooth numbers (numbers of the form $2^n 3^m$) are asymptotically dense (in logarithmic scale), allowing for other factors to be exchanged for $3$-smooth factors with little loss.  The weaker results mentioned in \Cref{old}  only used the prime $2$ as a supply of ``liquidity'', and thus encountered inefficiencies due to the inability to ``make change'' when approximating another factor by a power of two.

\subsection{Linear and integer programming solvers}

This paper uses linear programming as a mathematical tool, and we also use linear and integer programming solvers for computations.
The solvers used were Gurobi~\cite{gurobi} and \texttt{lp\_solve}~\cite{lpsolve}.
A comment on the blog of one of the authors brought to our attention a question on MathOverflow\footnote{\url{https://mathoverflow.net/questions/419722/reliability-of-ilp-approach-to-number-theoretic-optimization}} regarding the reliability of integer linear programming solvers.
There, Max Alekseyev considers the related problem of factoring $N!$ into \emph{two} integer factors and maximizing the smaller factor.
For a fixed small $N$, it is tempting to express this problem as an integer linear program in a relatively straightforward way, and then asking a integer linear programming solver for the solution.
Unfortunately, many solvers produce incorrect (and worse, inadmissible) solutions to these programs, even for $N\le 40$.

Despite the similarity of the statement of the problems, most of the linear programs we analyze are much better-behaved than the ones discussed in this question.
Specifically, the linear programs mentioned by Alekseyev have coefficients (of the form $\log p$) which are transcendental, and the primary cause of the numerical issues is the numerical difficulty in verifying inequalities.
Our coefficients are small integers (often of the form $\nu_p(j)$), and as a result, solvers should not encounter numerical issues in simply verifying that inequalities between integers hold, even when $N$ is many millions.
(Some of our linear programs have coefficients which are rational numbers with large denominators, such as in \Cref{onethird-large}, but all of these solutions are verified in exact arithmetic precisely because of these numerical concerns.)

Nonetheless, we sought to verify all uses of a linear program solver in our work.
One of the particularly pleasing properties of the use of linear program solvers is that they often produce output that can be verified for correctness, even if the solver has a bug or numerical issue.
Specifically, when an integer program solver produces a lower bound on $t(N)$ (or a related quantity like $t_{2,3,5,7}(N)$), this corresponds to an explicit factorization of $N!$, so we can verify the factorization directly.
Alternatively, when a linear program solver produces an upper bound on $t(N)$ (or a related quantity like $t_{2,3}(N)$), we can verify that the dual linear program solution output is admissible in exact arithmetic, possibly after rounding the output.
We have done this for all parts of our main \Cref{main}, so our results do not depend on the correctness of any linear program solver.
(Note that the solvers did not produce any incorrect solutions to our programs.)

We have also used integer program solvers to produce the data in \Cref{fig-t5,fig-t7,fig-long}.
The lower bounds from these figures, which correspond to explicit factorizations, have been verified directly.
The upper bounds have not been verified, but we do expect them all to be exactly correct.
The data from these Figures is not used in any of the results.
(The data in other tables and figures, such as \Cref{fig-t2,fig-t3}, has been verified.)

\subsection{Author contributions and data}

This project was initially conceived as a single-author manuscript by Terence Tao, but since the release of the initial preprint \cite{tao}, grew to become a collaborative project organized via the GitHub repository \cite{github}, which also contains the supporting code and data for the project.  The contributions of the individual authors, according to the CRediT categories\footnote{\url{https://credit.niso.org/}}, are as follows:

\begin{itemize}
\item Boris Alexeev: Formal Analysis, Investigation, Methodology, Software, Validation, Writing -- review \& editing.
\item Evan Conway: Formal Analysis, Investigation, Software.
\item Matthieu Rosenfeld: Software.
\item Andrew V. Sutherland: Formal Analysis, Investigation, Methodology, Software, Validation, Writing -- review \& editing.
\item Terence Tao: Conceptualization, Formal Analysis, Methodology, Project Administration, Visualization, Writing -- original draft, Writing -- review \& editing.
\item Markus Uhr: Formal Analysis, Software.
\item Kevin Ventullo: Software.
\end{itemize}

\subsection{Acknowledgments}

AVS is supported by Simons Foundation Grant 550033.  TT is supported by NSF grant DMS-2347850.  We thank Thomas Bloom for the web site \url{https://www.erdosproblems.com}, where TT learned of this problem, as well as Bryna Kra and Ivan Pan for  corrections. We thank the referees for valuable suggestions and corrections.

\section{Notation and basic estimates}

In this paper the natural numbers $\N=\{1,2,3,\dots\}$ will start at $1$.

We use the usual asymptotic notation $X = O(Y)$, $X \ll Y$, or $Y \gg X$ to denote an inequality of the form $|X| \leq CY$ for some absolute constant $C$; if we need this constant to depend on additional parameters, we will indicate this by subscripts, thus for instance $O_M(Y)$ denotes a quantity bounded in magnitude by $C_M Y$ for some $C_M$ depending on $M$.  We also write $X \asymp Y$ for $X \ll Y \ll X$. For effective estimates, we will use the more precise notation $O_{\leq}(Y)$ to denote any quantity whose magnitude is bounded by exactly at most $Y$. We also use $O_{\leq}(Y)^+$ to denote a quantity of size $O_{\leq}(Y)$ that is also non-negative, that is to say it lies in the interval $[0,Y]$.  We also use $o(X)$ to denote any quantity bounded in magnitude by $c(N) X$, for some $c(N)$ that goes to zero as $N \to \infty$. We also use $X = \Omega(Y)$ to denote an inequality of the form $|X| \geq CY$ for some absolute constant $C$.

If $S$ is a statement, we use $1_S$ to denote its indicator, thus $1_S=1$ when $S$ is true and $1_S=0$ when $S$ is false.  If $x$ is a real number, we use $\lfloor x \rfloor$ to denote the greatest integer less than or equal to $x$, and $\lceil x \rceil$ to be the least integer greater than or equal to $x$.

Throughout this paper, the symbol $p$ (or $p_0$, $p_1$, etc.) is always understood to be restricted to be prime.
We use $(a,b)$ to denote the greatest common divisor of $a$ and $b$, $a|b$ to denote the assertion that $a$ divides $b$, and $\pi(x) = \sum_{p \leq x} 1$ to denote the usual prime counting function.  For a natural number $n$, we use $P_+(n)$ and $P_-(n)$ to denote the largest and smallest prime factors of $n$, respectively, with the convention that $P_-(1) = P_+(1) = 1$.

We use $\nu_p(a/b) = \nu_p(a)-\nu_p(b)$ to denote the $p$-adic valuation of a positive rational number $a/b$, that is to say the number of times $p$ divides the numerator $a$, minus the number of times $p$ divides the denominator $b$.  For instance, $\nu_2(32/27)=5$ and $\nu_3(32/27)=-3$.
If one applies a logarithm to the fundamental theorem of arithmetic, one obtains the identity
\begin{equation}\label{ftoa}
  \sum_p \nu_p(r) \log p = \log r
\end{equation}
for any positive rational $r$.
For a natural number $n$, we can write
\begin{equation}\label{nup-form}
  \nu_p(n) = \sum_{j=1}^\infty 1_{p^j|n}.
\end{equation}
Upon taking partial sums, we recover Legendre's formula
\begin{equation}\label{legendre}
  \nu_p(N!) = \sum_{j=1}^\infty \left\lfloor \frac{N}{p^j} \right\rfloor = \frac{N - s_p(N)}{p-1}
\end{equation}
where $s_p(N)$ is the sum of the digits of $N$ in the base $p$ expansion.

Given a multiset of integers $\tuple$ that is a putative factorization of $N!$,
we refer to the quantity $\nu_p\left( \frac{N!}{\prod \tuple} \right)$ as the \emph{$p$-surplus} of $\tuple$ with respect to the target $N!$, and similarly refer to the negative $-\nu_p\left( \frac{N!}{\prod \tuple} \right) = \nu_p\left( \frac{\prod \tuple}{N!} \right)$ of this surplus as the \emph{$p$-deficit}, with the multiset being \emph{$p$-balanced} if the $p$-surplus (or $p$-deficit) is zero.  Thus, $\tuple$ is a (complete) factorization of $N!$ if it is balanced at every prime $p$, and it is a subfactorization if it is in balance or surplus at every prime $p$.

Let $M(N,t)$ denote the maximal cardinality of a $t$-admissible subfactorization of $N!$; thus, by \Cref{subfac}, $t(N) \geq t$ if and only if $M(N,t) \geq N$.

To bound the factorial, we have the explicit Stirling approximation \cite{robbins}
\begin{equation}\label{stirling}
\log N! = N \log N - N + \log \sqrt{2\pi N} + O_\leq^+\left(\frac{1}{12N}\right),
\end{equation}
valid for all natural numbers $N$.

\subsection{Approximation by \texorpdfstring{$3$}{3}-smooth numbers}

The primes $2,3$ will play a special role\footnote{One could also run analogous arguments with other sets of tiny primes; for instance, the initial version \cite{tao} of this paper only utilized the prime $2$ in this fashion.} in this paper and will be referred to as \emph{tiny primes}.
Call a natural number \emph{$3$-smooth} if it is the product of tiny primes, i.e., it is of the form $2^n 3^m$ for some natural numbers $n,m$, and \emph{$3$-rough} if it is not divisible by any tiny prime, that is to say it is coprime to $6$.  Given a positive real number $x$, we use $\lceil x \rceil^{\langle 2,3 \rangle}$ to denote the smallest $3$-smooth number greater than or equal to $x$.  For instance, $\lceil 5 \rceil^{\langle 2,3 \rangle} = 6$ and $\lceil 10 \rceil^{\langle 2,3 \rangle} = 12$.

It will be convenient to introduce a variant of this quantity that is close to a power of $12$. The significance of the base $12$ is that the $3$-smooth portion $2^{\nu_2(N!)} 3^{\nu_3(N!)}$ of $N!$, which serves as our ``liquidity pool'', is approximately $2^N 3^{N/2} = \sqrt{12}^{N}$; see \eqref{legendre} above.  This makes $\log \sqrt{12}$ a natural ``unit of currency'' in which to conduct various factor exchanges, with various integer linear combinations of $\log 2$ and $\log 3$ usable as ``small change'' to approximate quantities that are not integer multiples of $\log \sqrt{12} = \log 2 + \frac{1}{2} \log 3$.

If $1 \leq L \leq x$ is an additional real parameter, we define
\begin{equation}\label{fancy-kappa-def}
  \lceil x \rceil^{\langle 2,3\rangle}_L \coloneqq 12^a \lceil x/12^a \rceil^{\langle 2,3 \rangle}
\end{equation}
for any real $x \geq L \geq 1$, where $a \coloneqq \left\lfloor \frac{x/L}{\log 12} \right\rfloor$ is the largest integer such that $12^a \leq x/L$.

For any $L \geq 1$, let $\kappa_L$ be the least quantity such that
\begin{equation}\label{kappa-def}
  x \leq \lceil x \rceil^{\langle 2,3\rangle} \leq \exp(\kappa_L) x
\end{equation}
holds for all $x \geq L$; see \Cref{fig:nextsmooth}.  In \Cref{power-sec} we establish the following facts:

\begin{figure}[hbp!]
  \centering
  \includegraphics[width=0.8\textwidth]{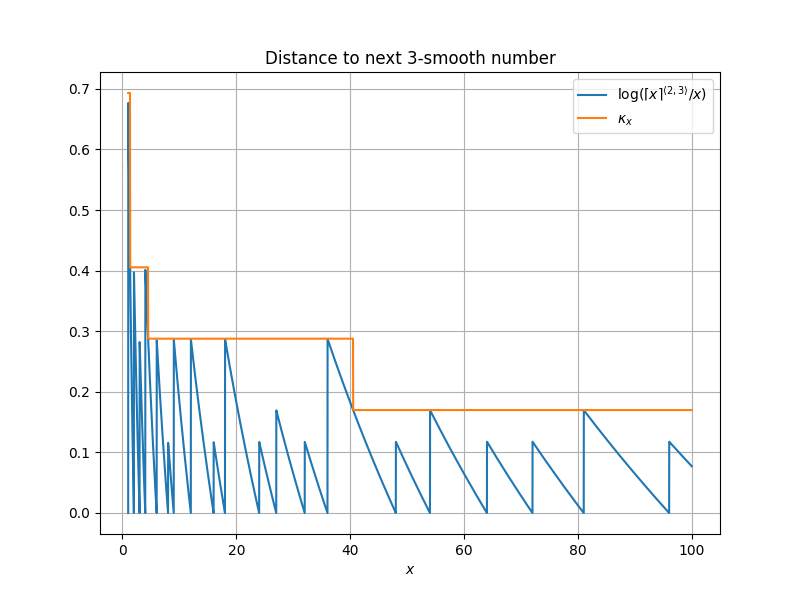}
  \vspace{-8pt}
  \caption{The function $\log \frac{\lceil x \rceil^{\langle 2,3 \rangle}}{x}$, compared against $\kappa_x$.
  }\label{fig:nextsmooth}
  \end{figure}

\begin{lemma}[Approximation by $3$-smooth numbers]\label{power-lemma}\
\begin{itemize}
\item[(i)]  We have $\kappa_{4.5} = \log\frac{4}{3} = 0.28768\dots$ and $\kappa_{40.5} = \log \frac{32}{27} = 0.16989\dots$.
\item[(ii)]  For large $L$, one has $\kappa_L \ll \log^{-c} L$ for some absolute constant $c>0$.
\item[(iii)]  If $1 \leq L \leq x$ are real numbers, then
\begin{equation}\label{mod-kappa}
  x \leq \lceil x \rceil^{\langle 2,3\rangle}_L \leq \exp(\kappa_L) x
\end{equation}
and for any $0 \leq \gamma < 1$ we have
\begin{equation}\label{12-2}
\frac{\nu_2(\lceil x \rceil^{\langle 2,3\rangle}_L) - 2 \gamma \nu_3(\lceil x \rceil^{\langle 2,3\rangle}_L)}{1-\gamma} \leq \frac{\log x  + \kappa^{(2)}_{L,\gamma} }{\log \sqrt{12}}
\end{equation}
and
\begin{equation}\label{12-3}
\frac{2\nu_3(\lceil x \rceil^{\langle 2,3\rangle}_L) - \gamma \nu_2(\lceil x \rceil^{\langle 2,3\rangle}_L)}{1-\gamma} \leq \frac{\log x + \kappa^{(3)}_{L,\gamma} }{\log \sqrt{12}}
\end{equation}
where
\begin{equation}\label{kappastar-2-def}
\kappa^{(2)}_{L,\gamma} \coloneqq \left(\frac{\log \sqrt{12}}{(1-\gamma)\log 2} - 1\right) \log(12L) + \frac{\kappa_L\log \sqrt{12}}{(1-\gamma) \log 2},
\end{equation}
\begin{equation}\label{kappastar-3-def}
  \kappa^{(3)}_{L,\gamma} \coloneqq \left(\frac{\log \sqrt{12}}{(1-\gamma)\log \sqrt{3}} - 1\right) \log(12L) + \frac{\kappa_L\log \sqrt{12}}{(1-\gamma)\log \sqrt{3}}.
\end{equation}
\end{itemize}
\end{lemma}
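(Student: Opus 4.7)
The plan is to treat the three parts largely independently: part (i) by a finite case analysis on the 3-smooth factorization, part (ii) by a Diophantine approximation argument, and part (iii) by a bookkeeping calculation built on a specific decomposition of $\lceil x \rceil^{\langle 2,3\rangle}_L$.

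For (i), I would begin from the observation that if $s_1 < s_2 < \dots$ enumerates the 3-smooth integers and $s_\star$ is the smallest such integer at least $L$, then
\begin{equation*}
\kappa_L = \max\Bigl(\log(s_\star/L),\ \sup_{s_i \geq s_\star} \log(s_{i+1}/s_i)\Bigr).
\end{equation*}
For $L = 4.5$ one has $s_\star = 6$ with boundary contribution $\log(6/4.5) = \log(4/3)$, and a short case analysis on $(a,b) = (\nu_2(s_i), \nu_3(s_i))$ for $s_i \geq 6$ produces a 3-smooth multiplier of ratio at most $4/3$ in every case (for instance $\tfrac{4}{3}s_i = 2^{a+2}3^{b-1}$ when $b \geq 1$, and $\tfrac{9}{8}s_i = 2^{a-3}3^{b+2}$ when $a \geq 3$, $b=0$), the bound being attained at $s_i = 6 \to 8$. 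For $L = 40.5$ the analogous organization by $b = \nu_3(s_i)$ confirms $s_{i+1}/s_i \leq 32/27$ for $s_i \geq 48$, with the extremal case being $b \geq 3$, $a \leq 2$ (attained e.g.\ at $s_i = 81 \to 96$), while the boundary contributes $\log(48/40.5) = \log(32/27)$.

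For (ii), I would invoke Dirichlet's approximation theorem (or equivalently convergents of $\log_2 3$). Setting $N \coloneqq \lfloor c \log L \rfloor$ for a small absolute $c > 0$, pigeonhole on $\{a\log 2 + b\log 3 : 0 \leq a,b \leq N\}$ produces $(u,v) \in \Z^2 \setminus \{0\}$ with $\max(|u|,|v|) \leq N$ and $0 < u\log 2 + v\log 3 \ll 1/N$ (negating if necessary to achieve positivity). Any 3-smooth $s_i = 2^a 3^b \geq L$ with $a, b \geq N$ can then be shifted by $(u,v)$ to a larger 3-smooth number within a multiplicative factor $1 + O(1/\log L)$. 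The axis cases $s_i = 3^b$ or $s_i = 2^a$ (where one exponent vanishes) are handled by one-sided shifts coming from convergents of $\log_2 3$ of the appropriate alternating sign; these succeed because $s_i \geq L$ forces the complementary exponent to be of order $\log L$, leaving room for enough convergents. This yields the stronger bound $\kappa_L \ll 1/\log L$.

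For (iii), the key decomposition is $y \coloneqq \lceil x \rceil^{\langle 2,3\rangle}_L = 12^a z$, with $a = \lfloor \log(x/L)/\log 12 \rfloor$ and $z = \lceil x/12^a \rceil^{\langle 2,3\rangle}$ a 3-smooth integer. Since the definition of $a$ gives $x/12^a \in [L, 12L)$, inequality \eqref{mod-kappa} follows by applying the defining inequality of $\kappa_L$ to $x/12^a$ and multiplying by $12^a$. For \eqref{12-2}, the identities $\nu_2(y) = 2a + \nu_2(z)$ and $\nu_3(y) = a + \nu_3(z)$ yield
\begin{equation*}
\frac{\nu_2(y) - 2\gamma\nu_3(y)}{1-\gamma} = 2a + \frac{\nu_2(z) - 2\gamma\nu_3(z)}{1-\gamma},
\end{equation*}
and a linear program in $(\nu_2(z),\nu_3(z)) \in \R_{\geq 0}^2$ subject to $\nu_2(z)\log 2 + \nu_3(z)\log 3 \leq \log z$ is maximized at the vertex $\nu_3(z) = 0$, giving $\nu_2(z) - 2\gamma\nu_3(z) \leq \log z/\log 2$. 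Substituting $\log z \leq \log x + \kappa_L - 2a\log\sqrt{12}$ (from $z \leq y/12^a$ together with \eqref{mod-kappa}) produces a negative coefficient $1 - \tfrac{\log\sqrt{12}}{(1-\gamma)\log 2}$ in front of $2a$, which the complementary lower bound $2a\log\sqrt{12} \geq \log x - \log(12L)$ (coming from $12^{a+1} > x/L$) is used to dominate; the $\log x$ terms then combine exactly to $\log x/\log\sqrt{12}$, and the residual constant rearranges to $\kappa^{(2)}_{L,\gamma}/\log\sqrt{12}$. Inequality \eqref{12-3} is proved identically, with the linear-program vertex now at $\nu_2(z) = 0$ and $\log 2$ replaced by $\log\sqrt{3}$. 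The main obstacle is the axis-case handling in part (ii), since (i) is finite verification and (iii) is a routine linear-programming manipulation once the decomposition $y = 12^a z$ is in place.
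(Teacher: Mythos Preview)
Your treatments of parts (i) and (iii) are correct and match the paper's approach. For (i) the paper packages the same case analysis into a reusable lemma (if $3^{m_1}/2^{n_1},\,2^{n_2}/3^{m_2} \geq 1$ then $\kappa_{\min(2^{n_1+n_2},3^{m_1+m_2})/6} \leq \log\max(3^{m_1}/2^{n_1}, 2^{n_2}/3^{m_2})$), which it then also feeds into (ii); for (iii) your linear-program bound $\nu_2(z) - 2\gamma\nu_3(z) \leq (\log z)/\log 2$ is exactly the paper's computation, carried out in the same order.

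Part (ii), however, has a genuine gap. The bound $\kappa_L \ll 1/\log L$ that you claim is not known; it is in fact equivalent to $\log_2 3$ having bounded partial quotients, which is open. The failure is in the axis case. Take $s_i = 3^b$ with $b = \lceil \log_3 L\rceil$. Every $3$-smooth $s' > s_i$ is of the form $2^{p}3^{b-k}$ with $1 \leq k \leq b$ and $p = \lceil k\log_2 3\rceil$, so the gap after $s_i$ equals $\log 2 \cdot \min_{1\leq k\leq b}\bigl(1 - \{k\log_2 3\}\bigr)$, a \emph{one-sided} Diophantine quantity. When $q_{2j} \leq b < q_{2j+1}$ the only admissible upper approximants with denominator at most $b$ are the semiconvergents $q_{2j-1} + m q_{2j}$; a direct computation (using $|\delta_{2j-1}|q_{2j} + |\delta_{2j}|q_{2j-1} = 1$) shows that for $b$ near $q_{2j+1}/2$ the product of $b$ with the best such error is of order $a_{2j+1}$, the next partial quotient. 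Your phrase ``leaving room for enough convergents'' does not control this. The paper supplies the missing ingredient: Baker's theorem on linear forms in logarithms gives $q_{j+1} \ll q_j^{O(1)}$, and feeding that polynomial bound through the same convergent machinery yields only $\kappa_L \ll (\log L)^{-c}$ with $c>0$ depending on the Baker exponent --- which is precisely what the lemma asserts, not the stronger $1/\log L$.
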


We remark that when $x$ is a power of $12$, the left-hand sides of \eqref{12-2}, \eqref{12-3} are both equal to $\frac{\log x}{\log \sqrt{12}}$; thus the estimates \eqref{12-2}, \eqref{12-3} are quite efficient asymptotically.

We use the notation $\sum^*$ to denote summation restricted to $3$-rough numbers, thus for instance $\sum_{a < k \leq b}^* 1$ denotes the number of $3$-rough numbers in $(a,b]$.  We have a simple estimate for such counts:

\begin{lemma}\label{lit}  For any interval $(a,b]$ with $0 \leq a \leq b$ one has $\sum_{a < k \leq b}^* 1 = \frac{b-a}{3} + O_{\leq}(4/3)$.
\end{lemma}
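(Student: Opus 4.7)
The plan is to exploit the fact that a natural number is $3$-rough exactly when it is congruent to $1$ or $5$ modulo $6$, so the $3$-rough numbers form a periodic set of density exactly $2/6 = 1/3$. I would reduce the two-sided count in the lemma to a one-sided counting function by setting $\phi(x) \coloneqq \sum_{1 \leq k \leq x}^* 1$ for $x \geq 0$ and writing $\sum_{a<k\leq b}^* 1 = \phi(b)-\phi(a)$.

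The heart of the proof will be the one-sided estimate $\phi(x) = x/3 + O_\leq(2/3)$. To prove it, I would write $\lfloor x \rfloor = 6m + r$ with $r \in \{0,1,2,3,4,5\}$; then $\phi(x)$ equals $2m$ (the contribution from $\{1,\dots,6m\}$, which contains exactly the two residue classes $1,5 \pmod 6$) plus the number of $3$-rough integers in $\{6m+1,\dots,6m+r\}$, which is $0,1,1,1,1,2$ as $r = 0,1,2,3,4,5$. A direct six-case tabulation comparing $\phi(x)$ to $x/3$ (keeping in mind that $x - \lfloor x \rfloor$ ranges over $[0,1)$) should confirm that in every case $\phi(x) - x/3 \in (-2/3, 2/3]$, establishing the $O_\leq(2/3)$ error.

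With the one-sided bound in hand, the lemma follows at once from the triangle inequality:
\begin{equation*}
\left| \phi(b) - \phi(a) - \tfrac{b-a}{3} \right| \;\leq\; \left|\phi(b) - \tfrac{b}{3}\right| + \left|\phi(a) - \tfrac{a}{3}\right| \;\leq\; \tfrac{2}{3} + \tfrac{2}{3} = \tfrac{4}{3}.
\end{equation*}
There is no real obstacle here; the only mildly tedious step is the six-case tabulation, which is entirely routine. An alternative route via inclusion--exclusion on multiples of $2$, $3$, and $6$ would also work, using $1 - \tfrac{1}{2} - \tfrac{1}{3} + \tfrac{1}{6} = \tfrac{1}{3}$, but controlling the four fractional-part errors naively gives a worse constant than $4/3$ unless one tracks cancellations carefully, so the direct residue-class argument is cleaner.
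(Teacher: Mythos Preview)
Your proposal is correct and follows essentially the same approach as the paper: reduce via the triangle inequality to the one-sided estimate $\sum_{0<k\leq x}^* 1 - x/3 = O_{\leq}(2/3)$, then verify this by the $6$-periodicity of the residue pattern. The paper phrases the verification as ``easily verified for $0 \leq x \leq 6$, and the left-hand side is $6$-periodic,'' whereas you carry out the six-case tabulation explicitly, but these are the same argument.
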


\begin{figure}
  \centering
  \includegraphics[width=0.8\textwidth]{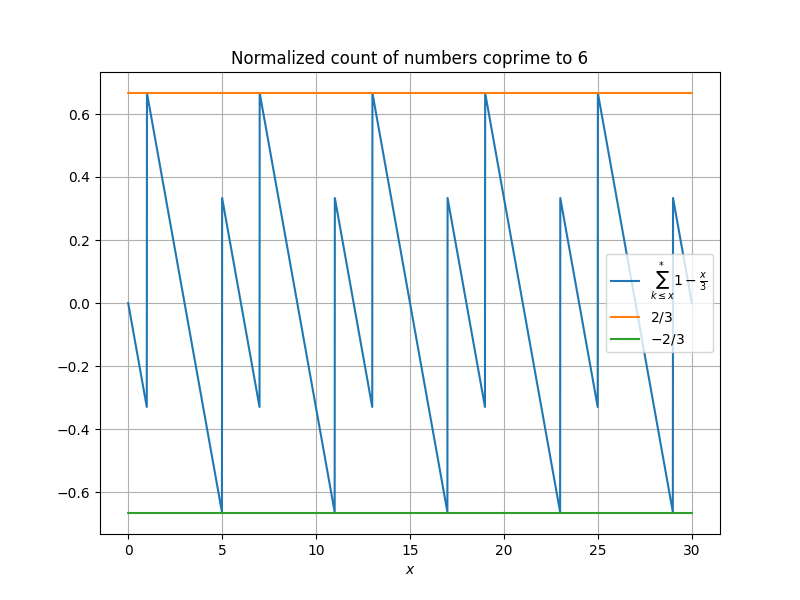}
  \vspace{-8pt}
  \caption{The function $\sum_{k \leq x}^* 1 - \frac{x}{3}$.}\label{fig-saw}
\end{figure}

\begin{proof}  By the triangle inequality, it suffices to show that $\sum_{0 < k \leq x}^* 1 - \frac{x}{3} = O_{\leq}(2/3)$ for all $x \geq 0$.  This is easily verified for $0 \leq x \leq 6$, and the left-hand side is $6$-periodic in $x$, giving the claim; see \Cref{fig-saw}.
\end{proof}

\subsection{Sums over primes}

We recall the effective prime number theorem from \cite[Corollary 5.2]{dusart}, which asserts that
\begin{equation}\label{pi-lower}
  \pi(x) \geq \frac{x}{\log x} + \frac{x}{\log^2 x}
\end{equation}
for $x \geq 599$ and
\begin{equation}\label{pi-upper}
  \pi(x) \leq \frac{x}{\log x} + \frac{1.2762 x}{\log^2 x}
\end{equation}
for $x >1$.

We will also need to control sums of somewhat oscillatory functions over primes, for which the bounds in \eqref{pi-lower}, \eqref{pi-upper} are of insufficient strength. Let $y<x$ be real numbers. Given a function $b \colon (y,x] \to \R$, its \emph{total variation}
$\|b\|_{\mathrm{TV}(y,x]}$ is defined as the supremum of the quantities $\sum_{j=0}^{J-1} |b(x_{j+1})-b(x_j)|$ for $y < x_0 \leq \dots \leq x_J \leq x$, and the \emph{augmented total variation} $\|b\|_{\mathrm{TV}^*(y,x]}$ is defined as
$$
\|b\|_{\mathrm{TV}^*(y,x]}
\coloneqq |b(y^+)| + |b(x)| + \|b\|_{\mathrm{TV}(y,x]},$$
where $b(y^+) \coloneqq \lim_{t \to y^+} b(t)$ denotes the right limit of $b$ at $y$ (which exists if $b$ is of finite total variation).  Equivalently, $\|b\|_{\mathrm{TV}^*(y,x]}$ is the total variation of $b$ if extended by zero outside of $(y,x]$. The indicator function $1_{(y,x]}$ clearly has an augmented total variation of $2$.

We will use this augmented total variation to control sums over primes.  More precisely, in \Cref{primes-sec} we will show

\begin{lemma}[Effective bounds for oscillatory sums over primes]\label{osc-lemma}  Let $1423 \leq y \leq x$, and let $b: (y,x] \to \R$ be of bounded total variation.
  \begin{itemize}
    \item [(i)] Wwe have the bound
\begin{equation}\label{bv-exact}
    \sum_{y < p \leq x} b(p) \log p = \int_y^x \left(1-\frac{2}{\sqrt{t}}\right) b(t)\ dt + O_{\leq}(\|b\|_{\mathrm{TV}^*(y,x]} E(x))
\end{equation}
where the error function $E(x)$ is defined as
\begin{equation}\label{tilde-e}
  E(x) \coloneqq 0.95 \sqrt{x} + 3.83 \times 10^{-9} x.
\end{equation}
  \item[(ii)] One has
\begin{equation}\label{pix}
  \pi(x) - \pi(y) = \int_y^x \left(1-\frac{2}{\sqrt{t}}\right)\frac{dt}{\log t} + O_\leq\left(2 \frac{E(x)}{\log y}\right),
\end{equation}
the upper bound
\begin{equation}\label{pixy-upper}
 \pi(x) - \pi(y) \leq \frac{x-y}{2\log y} + \frac{x-y}{2\log x} + 2 \frac{E(x)}{\log y}
\end{equation}
and the lower bound
\begin{equation}\label{pixy-lower}
  \pi(x) - \pi(y) \geq \left(1-\frac{2}{\sqrt{y}}\right) \frac{x-y}{\log \frac{x+y}{2}} - 2 \frac{E(x)}{\log y}.
\end{equation}
  \item[(iii)] If $b$ is non-negative, we have the upper bound
\begin{equation}\label{bv-upper}
   \sum_{y < p \leq x} b(p) \leq \frac{1}{\log y} \int_y^x b(t)\ dt + \|b\|_{\mathrm{TV}^*(y,x]} \frac{E(x)}{\log y}
\end{equation}
and the lower bound
\begin{equation}\label{bv-lower}
  \sum_{y < p \leq x} b(p) \leq \frac{1-\frac{2}{\sqrt{y}}}{\log x} \int_y^x b(t)\ dt - \|b\|_{\mathrm{TV}^*(y,x]} \frac{E(x)}{\log x}.
\end{equation}
\end{itemize}
One can replace all occurrences of $E(x)$ here by the classical error term $O(x \exp(-c \sqrt{\log x}))$ for some absolute constant $c>0$ (in which case the $\frac{2}{\sqrt{t}}$ type terms can be absorbed into the error term).
\end{lemma}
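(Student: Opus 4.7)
The plan is to combine Abel summation (Riemann--Stieltjes integration) with explicit effective bounds on the Chebyshev function $\theta(x) \coloneqq \sum_{p \leq x} \log p$ drawn from B\"uthe and Dusart. I would first express
\[
\sum_{y < p \leq x} b(p) \log p = \int_{y^+}^x b(t)\, d\theta(t),
\]
and introduce the smooth reference function $F(t) \coloneqq t - 4\sqrt{t}$, chosen so that $F'(t) = 1 - 2/\sqrt{t}$ is exactly the claimed main-term density. Writing $R(t) \coloneqq \theta(t) - F(t)$, the sum splits as
\[
\int_y^x b(t)\!\left(1 - \tfrac{2}{\sqrt{t}}\right) dt \;+\; \int_{y^+}^x b(t)\, dR(t),
\]
so the whole task reduces to controlling the remainder Stieltjes integral.

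The next step is to bound $|R(t)|$ by (essentially) $E(x)$ on the interval $[1423, x]$. I would split into two regimes. For moderate $t$, a B\"uthe-type explicit bound of the form $|\psi(t) - t| \leq c\sqrt{t}$ in the numerically verified range, combined with the identity $\psi - \theta = \theta(\sqrt{\,\cdot\,}) + \theta(\sqrt[3]{\,\cdot\,}) + \cdots$ to control prime-power contributions, yields the $0.95\sqrt{x}$ piece of $E(x)$. For large $t$ beyond the B\"uthe range, I would use Dusart's bound $|\theta(t) - t| \leq 3.83 \times 10^{-9} t$ to supply the linear $3.83 \times 10^{-9} x$ piece. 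With these estimates in hand, Riemann--Stieltjes integration by parts
\[
\int_{y^+}^x b(t)\, dR(t) = b(x) R(x) - b(y^+) R(y^+) - \int_y^x R(t)\, db(t)
\]
immediately bounds the remainder by $(|b(x)| + |b(y^+)| + \|b\|_{\mathrm{TV}(y,x]})\, E(x) = \|b\|_{\mathrm{TV}^*(y,x]} E(x)$ (using monotonicity of $E$ to replace $|R(t)|$ by $E(x)$ throughout).

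The corollaries then fall out by specializing $b$ and applying elementary inequalities: taking $b(t) = 1/\log t$ (monotone, so its augmented total variation telescopes to $2/\log y$) produces \eqref{pix}; then \eqref{pixy-upper} and \eqref{pixy-lower} come by combining the pointwise bounds $1 - \tfrac{2}{\sqrt{y}} \leq 1 - \tfrac{2}{\sqrt{t}} \leq 1$ on $(y,x]$ with the convexity of $t \mapsto 1/\log t$ (whose Jensen midpoint form yields the $(x-y)/\log\!\tfrac{x+y}{2}$ denominator); and \eqref{bv-upper}, \eqref{bv-lower} are immediate from sandwiching $\log p$ between $\log y$ and $\log x$ in the non-negative integrand $b(p) \log p$. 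The hard part will be the explicit bookkeeping that pins down the constants $0.95$ and $3.83 \times 10^{-9}$ and the threshold $y \geq 1423$ from the best available unconditional B\"uthe/Dusart estimates, handling cleanly the crossover between the $\sqrt{x}$ and the linear regimes of $E(x)$. The concluding remark -- that $E(x)$ can be replaced by $x \exp(-c\sqrt{\log x})$ -- then follows by substituting the classical non-explicit PNT error term into the same argument, noting that the $2/\sqrt{t}$ correction gets absorbed into the larger error there.
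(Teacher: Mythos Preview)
Your approach is essentially the same as the paper's: both package the Abel/Riemann--Stieltjes integration by parts as a standalone lemma, reducing everything to a uniform bound on $\theta(z)-(z-c\sqrt{z})$ for $z\in[y,x]$, and then invoke explicit results of B\"uthe for the moderate range together with the $\psi$-to-$\theta$ conversion. The only notable bookkeeping difference is that the paper derives the linear $3.83\times 10^{-9}x$ piece not from Dusart but from a sharper intermediate error function $\tilde E(x)$ assembled out of B\"uthe and Johnston--Yang bounds on $|\psi(x)-x|$, and only afterward checks $\tilde E(x)\le E(x)$.
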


We remark that the accuracy in \eqref{bv-exact}, \eqref{pix} in particular is on par with what would be provided by the Riemann hypothesis, as long as $x$ is not too large (e.g., $x \leq 10^{18}$).  The other estimates in this lemma are not quite as precise, but are still adequate for our applications.  The error term $E(x)$ can be improved somewhat for large $x$ (see \eqref{etil-def}), but this simplified version will suffice for our analysis (in particular, the contribution of the second term in \eqref{tilde-e} will be negligible for our applications).  We make the easy remark that $E(x)$ is non-decreasing in $x$, while $E(x)/x$ is non-increasing.

\section{Greedy algorithms}\label{greedy-sec}

Recall that $t(N)$ can be interpreted as the largest $t$ for which one has $M(N,t) \geq N$, where $M(N,t)$ denotes the cardinality of the largest $t$-admissible subfactorization of $N$. Because of this, any algorithm that can produce lower bounds $M$ on $M(N,t)$ can also produce lower bounds on $t(N)$, as follows:

\begin{itemize}
  \item[Step 0:] Start with a heuristic lower bound $t$ for $t(N)$.
  \item[Step 1:] Use the provided algorithm to compute a lower bound $M(N,t)\ge M$.
  \item[Step 2:] If $M \ge N$, either \texttt{HALT} and report $t(N)\ge t$, or increase $t$ by some amount (possibly guided by the extent to which the lower bound $M$ exceeds $N$), and return to Step 1.
  \item[Step 3:] If $M < N$, decrease $t$ by some amount (possibly guided by the extent to which the lower bound $M$ falls short of $N$) and return to Step 1.
\end{itemize}

One can similarly use an algorithm that produces upper bounds for $M(N,t)$ to produce upper bounds for $t(N)$.

The algorithm described above is imprecisely specified, because it requires one to make some implementation decisions about how to select the parameter $t$ at various steps of the algorithm. In particular, having some accurate heuristics (or ``hints'') about what the correct value of $t(N)$ should be (possibly based on the outcomes of previous stages of the algorithm) can greatly accelerate its performance.  But regardless of this variability in speed, the $M(N,t)$ algorithm will in practice produce a certificate (e.g., an explicit subfactorization of $N!$) that can be quickly and independently verified by a separate computer program to confirm the lower bound on $t(N)$.  So the output of such imprecisely specified algorithms can at least be independently confirmed, if not reproduced exactly.  In particular, a lack of reproducibility does not prevent verification of a specific bound on $t(N)$, such as $t(N) \geq N/3$, so long as independently verifiable proof certificates (such as an $N/3$-admissible subfactorization of $N!$ of length at least $N$) are generated.

We therefore turn to the question of how to algorithmically obtain good upper and lower bounds on $M(N,t)$.  In this section we will discuss greedy methods to obtain lower bounds on this quantity; in the next section we will discuss how linear programming and integer programming methods can also be used to obtain both upper and lower bounds on $M(N,t)$.

The following \textit{greedy algorithm} produces reasonably good lower bounds on $M(N,t)$:

\begin{enumerate}
\item[Step 0:] Initialize $\tuple$ to be the empty multiset.
\item[Step 1:] If $\tuple$ is not a factorization of $N!$, determine the largest $p$ in surplus: $\nu_p\bigl(N!/\prod \tuple\bigr) > 0$.
\item[Step 2:] If $N! / \prod \tuple$ is divisible by a multiple of $p$ greater than or equal to $t$, determine the smallest such multiple, add it to $\tuple$, and return to Step 1.  Otherwise, \texttt{HALT}.
\end{enumerate}

This procedure clearly halts in finite time and produces a $t$-admissible subfactorization of $N!$.
The length of this subfactorization gives a lower bound on $M(N,t)$ that can be used to obtain lower bounds on $t(N)$ as discussed above.  For instance, applying this procedure with $N=9$, $t=3$ produces the $3$-admissible subfactorization
\[
\bigl\{7 \cdot 1,\ 5 \cdot 1,\ 3 \cdot 1,\ 3 \cdot 1,\ 3 \cdot 1,\ 3 \cdot 1,\ 2 \cdot 2,\ 2 \cdot 2,\ 2 \cdot 2 \bigr\},
\]
which recovers the bound $M(9,3) \geq 9$ (and hence $t(9) \geq 3$) from \Cref{nine}, albeit with a slightly different subfactorization, in which the $8$ is replaced by $4$.

The greedy approach works well for small $N$, producing the exact value of $t(N)$ for $N \leq 79$, but the quality of the bounds on $t(N)$ it produces declines as $N$ grows; see \Cref{fig-zoom}.  Its performance is also respectable (though not optimal) for medium $N$; for instance, when $N=3 \times 10^5$ and $t=N/3$, it establishes the lower bound $M(N,t) \geq N+372$, which is close to the exact value $M(N,t) = N+455$ we establish using the linear programming methods of the next section; see \eqref{mnt} and \eqref{mnt-lower}.

To handle the larger values of $N$ needed to establish Theorem~\ref{main}(iii) for $N\in [ 8\times 10^4, 10^{11}]$, and also for the broader range $N\in [67425,10^{14}]$ that comfortably overlaps regions in which we may apply other methods, we now consider how to efficiently implement Steps 1 and 2 of the greedy algorithm outlined above.  Let $p_1\ge p_2\ge \ldots \ge 2$ be the prime factors of $N>1$ listed with multiplicity in non-increasing order, and let $m_ip_i\ge t$ be the factor of $N!$ chosen by the greedy algorithm for the prime $p_i$ on input $t < N/2$.  Our implementation of the greedy algorithm is based on the following observations:

\begin{itemize}
\item[(a)] For large primes $p_i$ the greedy algorithm will always use $m_i=\lceil t/p_i\rceil$, with $m_j=m_i$ for all $p_j\in \left[\lceil t/m_i\rceil,\lceil t/(m_i-1)\rceil\right)$, including all $p_j\ge t$ for $m_i=1$.
\item[(b)] The sequence $m_i$ is nondecreasing ($m_{i+1}p_{i+1}\ge t$ implies $m_{i+1}p_i\ge t$ with $m_{i+1}$ dividing $\tuple$ after the $i$th step, so the greedy choice of $m_i$ satisfies $m_i\le m_{i+1}$).
\item[(c)] Each $m_i$ is $p_i$-smooth, and the ratio of the largest and smallest prime divisors of $m_i$ cannot exceed $t/m_i$ (if it did we could remove the smallest prime divisor from $m_i$).
\end{itemize}

Observation (a) allows us to efficiently handle large $p_i$, observation (b) enables an $O(N^{1+\epsilon})$ running time, and observation (c) allows us to more efficiently handle small $p_i$, which turns out to be the main bottleneck of the simple greedy algorithm sketched above.

In Section \ref{sec:fast-greedy} we describe a variant of the greedy algorithm that produces slightly weaker lower bounds on $M(N,t)$, but yields a power savings in the running time: we show that one can achieve an $O(N^{\nicefrac{2}{3}+\epsilon})$ running time asymptotically. The complexity of our implementation of this faster variant is actually $O(N^{\nicefrac{3}{4}+\epsilon})$, but it is faster than the asymptotically superior approach in the range of $N$ we are most concerned with.  While one might suppose that an algorithm whose output is a factorization of $N!$ into $N$ factors would require $\Omega(N)$ time (and $\Omega(N\log N)$ space for the output), we can compress this factorization using tuples of the form $(n,m,p_{\min},p_{\max})$ to represent $n$ occurrences of factors $mp$ for each prime $p\in [p_{\min},p_{\max}]$; for example, the tuple $(\pi(N)-\pi(t-1),1,t,N)$ encodes all factors $p_i\ge t$ in a single tuple.  This compression allows us to encode certificates of a subfactorization in $O(N^{\nicefrac{1}{2}+\epsilon})$ space that can be verified in $O(N^{\nicefrac{2}{3}+\epsilon})$ time, which is an important consideration for large $N$ (e.g. $N\approx 10^{14}$).

\subsection{Implementation of the greedy algorithm}
We assume $1< N/4< t < N/2$ throughout the rest of this section and fix $M \approx N^{\nicefrac{1}{2}}$.
Let $x_0=\lfloor N/M\rfloor \approx N^{\nicefrac{1}{2}}$, and partition the interval $(M,N]$ into subintervals $(x_k,x_{k+1})$ on which the step functions $f_t(x)\coloneqq\lceil t/x\rceil$ and $g_N(x)\coloneqq\lfloor N/x\rfloor$ are both constant, such that $x_r=N$, and $x_k$ is a point of discontinuity for at least one of $f_t(x)$ and $g_N(x)$ for $0<k<r$.  Under our assumption that the greedy algorithm uses optimal cofactors $m_i=\lceil t/p_i\rceil=f_t(p_i)$ for each prime $p_i>M$, we will have $g_N(x_k)$ factors $m_ip_i\ge t$ with $m_i=f_t(x_{k+1})$ for each prime $p_i\in (x_k,x_{k+1}]$, and we can compute the number of factors produced by the greedy algorithm that are divisible by a prime $p>M$ as
\begin{equation}\label{eq:picount}
\sum_{k=0}^{r-1} f_N(x_k)\left(\pi(x_{k+1})-\pi(x_k)\right).
\end{equation}
If $\tuple$ is the multiset of the factors $m_ip_i$ with $p_i\ge M$, for primes $p < M$ we can compute
\begin{equation}\label{eq:rval}
\nu_p\left(N!/\prod \tuple\right) = \nu_p(N!) - \sum_{k=0}^{r-1} f_N(x_k)\nu_p(f_t(x_{k+1}))
\end{equation}
using a precomputed table of factorizations of integers $m < N^{\nicefrac{1}{2}}$ and $\nu_p(N!)=\sum_{e=1}^{\lfloor \log_p(N)\rfloor} \Bigl\lfloor \frac{N}{p^e}\Bigr\rfloor$ in $O(N^{\nicefrac{1}{2}+\epsilon})$ time.  For $p < M$ we expect to have
\[
\sum_{k=0}^{r-1}f_N(x_k)\nu_p(f_t(x_{k+1})) \approx \frac{1}{p-1}\int_M^t \frac{N}{x\log x}dx \approx\frac{\log 2}{p-1} < \frac{1}{p-1} \approx \nu_p(N!),
\]
which motivates our observation (a) above.

\begin{remark} While our algorithm is based on the heuristic assumption that \eqref{eq:rval} is nonnegative for all $p<M$, it verifies this assumption at runtime. This verification did not fail in any of the computations used to prove that \Cref{main}(iii) holds for $67425\le N\le 10^{14}$, which is all that is needed for our results.  But if it were to fail, one could simply increase $M$ until it does not, and one can show that this will happen with $M=O(N^{\nicefrac{1}{2}+\epsilon})$, meaning that there is no impact on the asymptotic running time.  We have verified that \eqref{eq:rval} is nonnegative for all $10\le N\le 10^8$ and all primes $p \le M=\lceil\sqrt{N}\rceil$ with $t=\lceil N/3\rceil$.
\end{remark}

Computing the sum in \eqref{eq:picount} involves computing $\pi(x_k)$ for $O(N^{\nicefrac{1}{2}})$ values of $x_k\in (N^{1/2},N]$.
Up to a constant factor, this is the same as the cost of computing $\pi(N/x)$ for all positive integers $x \le N^{\nicefrac{1}{2}}$.
There are analytic methods to compute $\pi(x)$ in $O(x^{\nicefrac{1}{2}+\epsilon})$ time \cite{lagarias-odlyzko}, which implies that the time to compute \eqref{eq:picount} can be bounded by
\[
\sum_{x=1}^{N^{\nicefrac{1}{2}}} \left(\frac{N}{x}\right)^{\nicefrac{1}{2}+\epsilon} \ll \int_1^{N^{\nicefrac{1}{2}}} \left(\frac{N}{x}\right)^{\nicefrac{1}{2}+\epsilon}\!\! dx = O\left(N^{\nicefrac{3}{4}+\epsilon}\right).
\]
We can improve the running time by enumerating primes up to $N^{\nicefrac{2}{3}}$ using a sieve and computing $\pi(x)$ for $x\le N^{\nicefrac{2}{3}}$ as we go.  This yields an $O(N^{\nicefrac{2}{3}+\epsilon})$ bound on the time to compute \eqref{eq:picount}, and can be accomplished using $O(N^{\nicefrac{1}{3}+\epsilon})$ space.

In practice, it is more common to compute $\pi(x)$ using the $O(x^{\nicefrac{2}{3}+\epsilon})$ algorithm described in \cite{deleglise-rivat,lagarias-miller-odlyzko}, for which high-performance implementations are widely available.  In this case the optimal asymptotic approach is to sieve primes up to $N^{3/4}$, yielding an $O(N^{\nicefrac{3}{4}+\epsilon})$ algorithm. In our implementation we used a  slightly smaller sieving bound that more evenly balances the time spent sieving primes versus counting them in the range $N\le 10^{14}$ via the \texttt{primesieve}~\cite{walisch-primesieve} and \texttt{primecount} \cite{walisch-primecount} libraries used in our implementation.

If we precompute the prime factorizations of the positive integers $m\le t/M$, we can compute \eqref{eq:rval} for all primes $p \le M$ in $O(N^{\nicefrac{1}{2}+\epsilon})$ time (and verify our assumption that it is nonnegative).  This is all the information we will need in the next phase of the algorithm, which partitions the remaining $M$-smooth part of $N!$ into factors of size at least $t$.

We now recall observations (b) and (c) above, that the $m_i$ are nondecreasing and $p_i$-smooth.    This means that we can  precompute a table of $M$-smooth integers $m\le t$ and process them in increasing order as we consider decreasing primes $p_i$, thereby obtaining a quasilinear running time $O(N^{1+\epsilon})$. At each step the algorithm will determine the largest exponent $e$ such that $(m_i p_i)^e$ divides $N!/\tuple$, so we will have $m_i=m_{i+1}=\cdots = m_{i+n-1}$ and $p_i=p_{i+1}=\cdots = p_{i+n-1}$, with either $p_{i+n}<p_i$ or $m_{i+n} > m_i$ (possibly both).
The running time is then dominated by the time to precompute the prime factorizations of all the candidate $m_i$.  The additional constraint on the prime factors of the $m_i$ noted in (c) reduces the number of candidate cofactors $m$ we need to store in memory by a logarithmic factor.  This does not change the $O(N^{1+\epsilon})$ complexity bound, but it is significant in the practical range of interest, where it reduces the memory required by up to a factor of about 40 for $N\le 10^{11}$.  But we can handle much larger values of $N$ by modifying the algorithm as described in the next subsection.

\subsection{A fast variant of the greedy algorithm}\label{sec:fast-greedy}
We now give a variant of the greedy algorithm that produces slightly weaker bounds on $t(N)$ in general, but obtains an $O(N^{\nicefrac{3}{4}+\epsilon})$ running time using $O(N^{\nicefrac{2}{3}+\epsilon})$ space (and the space can be reduced to $O(N^{\nicefrac{1}{2}+\epsilon})$).  The algorithm fixes $M$ satisfying $M(M-1)\ge t$ and treats the primes $p_i\ge M$ exactly as in the first phase of the greedy algorithm described above, using $O(N^{\nicefrac{3}{4}+\epsilon})$ time and $O(N^{\nicefrac{2}{3}+\epsilon})$ space.

In the second phase, rather than precomputing a list of all candidate $m_i\le N$, the algorithm instead precomputes a list of $M$-smooth integers $m\le N^{\nicefrac{2}{3}}$.  As it considers the small primes $p_i < M$ in decreasing order, it will eventually reach a point where no precomputed value of~$m$ is a suitable cofactor for $p_i$ (this will certainly happen for $p_i < N^{1/3}$).  When this occurs it will instead look for a cofactor that is suitable for $p_i^2$, which will be smaller and easier to construct from the remaining part of the factorization of $N!$, allowing the algorithm to remove all but at most one factor of $p_i$.  It will continue in this fashion to consider suitable cofactors for both $p$ and $p^2$ until it eventually reaches a point where neither can be found, at which point all the remaining $p_i$ are either very small, $O(N^\epsilon)$, or occur with multiplicity 1.  In the final phase we simply construct factors of $N!$ larger than $t$ by combining available remaining primes in decreasing order.  This will occasionally result in factors that are substantially larger than the original greedy algorithm would use, but there are only a small number of these and the algorithm can construct them quickly using very little memory.  This allows it to handle large values of $N$ much more efficiently, as can be seen in in the timings in \Cref{greedy-table}.

For this fast variant of the greedy algorithm, in contrast to the original greedy algorithm, the computation is dominated by the first phase, which takes $O(N^{\nicefrac{3}{4}+\epsilon})$ time to handle the primes $p_i>M$; the rest of the algorithm takes only $O(N^{\nicefrac{2}{3}+\epsilon})$ time.

\subsection{Optimizing bounds produced by the greedy algorithm}
On inputs $N$ and~$t$ the greedy algorithm produces a lower bound on $M(N,t)$. If this lower bound is greater than or equal to $N$ we can deduce $t(N)\ge t$, but it may be possible to prove a better lower bound on $t(N)$ using a larger value of $t$, and this is desirable even in the context of proving \Cref{main}(iii) where it would suffice to use $t=\lceil N/3\rceil$.  The function $t(N)$ is nondecreasing, since adding $N+1$ to a $t$-admissible factorization of $N!$ yields a $t$-admissible factorization of $(N+1)!$. It follows that if $t(N) \ge (N+\delta)/3$ for some integer $\delta >0$ then $t(N')\ge N'/3$ for all $N'\in [N,N+\delta]$ (a range that may include $N'$ for which the greedy algorithm cannot directly prove $t(N')\ge N'/3$).

For large $N$ we expect to be able to choose $t$ so that the greedy algorithm (and its fast variant) can prove $M(N,t)\ge N$, and therefore $t(N)\ge t$, using $t=\lceil (N+\delta)/3\rceil$ with $\delta \ge cN$ for some $c>0$ that approaches $1/e-1/3$ as $N\to \infty$.  In the context of proving \Cref{main}, this allows us to establish $t(N)\ge N/3$ for all $N$ in any sufficiently large dyadic interval using just $O(1)$ calls to the greedy algorithm or its fast variant, provided that we are able to choose (or precompute) suitable values of $t$.

Let $t_0(N)$ denote the least $t$ for which the greedy algorithm proves $M(N,t)\ge N$ for all $t\le t_0(N)$.  Let $t_1(N)$ denote the largest $t$ for which the greedy algorithm proves $M(N,t_1(N))\ge N$ but cannot prove $M(N,t)\ge N$ for any $t>t_1(N)$.  There will typically be a substantial gap between $t_0(N)$ and $t_1(N)$, but for large $N$ we expect both to exceed $N/3$ by a constant factor.  In this section we consider two problems:

\begin{itemize}
\item[(a)] Given $N$, quickly produce some $t\in [t_0(N),t_1(N)]$.
\item[(b)] Compute the exact value of $t_1(N)$.
\end{itemize}

Our solution to (a) suffices to establish Theorem~\ref{main}(iii) for $N\in [10^6,10^{14}]$ using the fast variant of the greedy algorithm.
Our solution to (b) allows us to extend this range to $[67425,10^{14}]$.  There are smaller $N$ for which $t_1(N)\ge N/3$, the least of which is $N=44716$, but there is no way to extend the lower end of the range $[67425,10^{14}]$ using the greedy algorithm, as there is no choice of $N$ or $t$ that will allow the greedy algorithm to prove $t(67424) \ge 67424/3$ (even indirectly); here we need the linear programming methods described in the next section.

A simple solution to (a) uses a bisection search: pick initial values $t_{\rm low} = N/4$ and $t_{\rm high} > N/2$ for which we know $[t_{\rm low},t_{\rm high})$ contains $[t_0(N),t_1(N)]$ invoke the greedy algorithm (or its fast variant) with $t=\lceil(t_{\rm low}+t_{\rm high})/2\rceil$ and update $t_{\rm low}$ or $t_{\rm high}$ as appropriate, depending on whether the greedy algorithm proves $M(N,t)\ge N$ or not.  After $O(\log N)$ iterations we will have $t_{\rm low} = t_{\rm high}-1$, at which point we know $t_{\rm low}\in [t_0(N),t_1(N)]$.

We can do slightly better by using the value of the bound on $M(N,t)$ determined by the greedy algorithm in each iteration to guide the search, rather than simply checking whether it is above or below $N$.  Rather than choosing $t=\lceil(t_{\rm low}+t_{\rm high})/2\rceil$, we start with $t=\lceil N/3\rceil$ and in each iteration we replace the most recently tested value of $t$ with the nearest integer to
\[
\exp\left(\frac{B}{N}\log t\right),
\]
where $M(N,t)\ge B$ is the bound proved by the greedy algorithm on input $t$,  subject to the constraint that this value must lie in the interval $[t_{\rm low},t_{\rm high})$
(we use $(3t_{\rm low}+t_{\rm high})/4$ if it is below the interval and $(t_{\rm low}+3t_{\rm high})/4$ if it is above).  We then update $t_{\rm low}$ and $t_{\rm high}$ as above.
In practice this heuristic method converges about twice as fast as a standard bisection search.

We now consider the more challenging problem of computing $t_1(N)$.  It is not clear that this function can be computed in quasi-linear time.  In the worst case our approach potentially involves $O(N)$ calls to the greedy algorithm, whereas our solution to (a) uses only $O(\log N)$.  This limits the range of its applicability, but it is easy to parallelize the search, and this makes it feasible to compute $t_1(N)$ for $N$ as large as $10^9$ (but $N=10^{14}$ is surely out of reach).

To compute $t_1(N)$ we first use our solution to problem (a) to establish a lower bound on $t_1(N)\ge t_0(N)$.  To obtain an upper bound, we use the first phase of the greedy algorithm to compute the number $B_1$ of factors $m_ip_i$ divisible by primes $p_i\ge M\approx N^{1/2}$, along with the remaining factor $R=N!/\tuple$ expressed in terms of its valuation at primes $p < M$ via \eqref{eq:rval}.
We may then take $B_1 + \lfloor \log R / \log t\rfloor$ as an upper bound on the number of factors the greedy algorithm could produce in the best possible case. Note that increasing $t$ can only decrease this upper bound, so we can use a bisection search to find the least $t$ for which this upper bound is less than $N/3$, which is then a strict upper bound on $t_1(N)$.

Computing the lower and upper bounds on $t_1(N)$ involves only $O(\log N)$ calls to (the first phase of) the greedy algorithm and can be done quickly.  But the interval determined by these lower and upper bounds is typically large and appears to grow linearly with $N$.  We cannot apply a bisection search because there will typically be many $t$ in this interval for which the greedy algorithm produces more than $N$ factors that are interspersed with $t$ for which this is not the case.  Lacking a better alternative, we use an exhaustive search (which can easily be run in parallel on multiple cores) to find the largest such $t$ between our upper and lower bounds for which the greedy algorithm outputs at least $N/3$ factors, which gives us the value of $t_1(N)$.

\subsection{Proving \texorpdfstring{\Cref{main}}{Theorem 1.3}(iii) for \texorpdfstring{$\boldsymbol{67425 \le N \le 10^{14}}$}{67425 <= N <= 1e14}}
To establish \Cref{main}(iii) for $67425 \leq N \leq 10^{14}$ we may proceed as follows:
\begin{enumerate}
\item[Step 0:] Let $N=67425$.
\item[Step 1:] While $N\le 10^6$, compute $t_1(N)$ via problem (b) above using the standard greedy algorithm, verify that $t_1(N) > \lceil N/3\rceil$, and replace $N$ by $3t_1(N)$.
\item[Step 2:] While $N\le 10^{14}$, compute $t\in [t_0(N),t_1(N)]$ via (a) above using the fast variant of the greedy algorithm, verify that $t_1(N) > \lceil N/3\rceil$, and replace $N$ by $3t_1(N)$.
\end{enumerate}
The GitHub repository \cite{github} associated to this paper contains lists of the pairs $(N,t)$ that arise from the procedure above, 223 pairs for Step~1 and 336 pairs for Step~2, which can be used to quickly verify its success by invoking the greedy algorithm for each pair from Step~1, and the fast variant of the greedy algorithm for each pair from Step~2, and verifying in each case that a subfactorization of $N!$ with at least $N/3$ factors is produced.  The script \url{https://github.com/teorth/erdos-guy-selfridge/blob/main/src/fastegs/verifyhints.sh} performs this verification, which takes much less time (under a minute) than it does to run the procedure above.

It thus remains to establish \Cref{main}(iii) in the region $43632 \leq N < 67425$ and $N > 10^{14}$, and to show that $t(N)<N/3$ for $N=43631$.

\begin{table}[htp!]
  \centering
  \begin{tabular}{|c|c|c|c|c|r|}
  \hline
\rule{0pt}{11pt}  $N$ & $t(N)_-$ & Fast heuristic & Time (s) & Standard exhaustive & Time (s) \\
  \hline
\rule{0pt}{12pt}$1 \times 10^5$ & $\num{33642}$ & $t(N)_- - \num{458}$ & 0.002 & $t(N)_- - \num{70}$ & 0.013 \\
$2 \times 10^5$ & $\num{67703}$ & $t(N)_- - \num{1046}$ & 0.000 & $t(N)_- - \num{90}$ & 0.015 \\
$3 \times 10^5$ & $\num{101903}$ & $t(N)_- - \num{1495}$ & 0.000 & $t(N)_- - \num{54}$ & 0.023 \\
$4 \times 10^5$ & $\num{136143}$ & $t(N)_- - \num{2610}$ & 0.001 & $t(N)_- - \num{147}$ & 0.036 \\
$5 \times 10^5$ & $\num{170456}$ & $t(N)_- - \num{3091}$ & 0.001 & $t(N)_- - \num{51}$ & 0.050 \\
$6 \times 10^5$ & $\num{204811}$ & $t(N)_- - \num{2878}$ & 0.001 & $t(N)_- - \num{214}$ & 0.058 \\
$7 \times 10^5$ & $\num{239187}$ & $t(N)_- - \num{3834}$ & 0.001 & $t(N)_- - \num{279}$ & 0.061 \\
$8 \times 10^5$ & $\num{273604}$ & $t(N)_- - \num{4216}$ & 0.001 & $t(N)_- - \num{226}$ & 0.086 \\
$9 \times 10^5$ & $\num{308029}$ & $t(N)_- - \num{4444}$ & 0.001 & $t(N)_- - \num{226}$ & 0.172 \\\hline
\rule{0pt}{12pt}$1 \times 10^6$ & $\num{342505}$ & $t(N)_- - \num{4863}$ & 0.001 & $t(N)_- - \num{202}$ & 0.238 \\
$2 \times 10^6$ & $\num{687796}$ & $t(N)_- - \num{7850}$ & 0.002 & $t(N)_- - \num{293}$ & 0.408 \\
$3 \times 10^6$ & $\num{1033949}$ & $t(N)_- - \num{10395}$ & 0.003 & $t(N)_- - \num{564}$ & 0.385 \\
$4 \times 10^6$ & $\num{1380625}$ & $t(N)_- - \num{14637}$ & 0.005 & $t(N)_- - \num{705}$ & 0.550 \\
$5 \times 10^6$ & $\num{1727605}$ & $t(N)_- - \num{20837}$ & 0.005 & $t(N)_- - \num{470}$ & 1.394 \\
$6 \times 10^6$ & $\num{2074962}$ & $t(N)_- - \num{25872}$ & 0.006 & $t(N)_- - \num{1480}$ & 2.053 \\
$7 \times 10^6$ & $\num{2422486}$ & $t(N)_- - \num{31513}$ & 0.008 & $t(N)_- - \num{829}$ & 6.479 \\
$8 \times 10^6$ & $\num{2770212}$ & $t(N)_- - \num{31401}$ & 0.008 & $t(N)_- - \num{1183}$ & 3.978 \\
$9 \times 10^6$ & $\num{3118129}$ & $t(N)_- - \num{35468}$ & 0.007 & $t(N)_- - \num{1100}$ & 2.941 \\\hline
\rule{0pt}{12pt}$1 \times 10^7$ & $\num{3466235}$ & $t(N)_- - \num{43529}$ & 0.010 & $t(N)_- - \num{1222}$ & 3.144 \\
$2 \times 10^7$ & $\num{6952243}$ & $t(N)_- - \num{73103}$ & 0.014 & $t(N)_- - \num{2730}$ & 10.825 \\
$3 \times 10^7$ & $\num{10444441}$ & $t(N)_- - \num{110137}$ & 0.011 & $t(N)_- - \num{1653}$ & 39.501 \\
$4 \times 10^7$ & $\num{13940484}$ & $t(N)_- - \num{107106}$ & 0.018 & $t(N)_- - \num{1544}$ & 22.470 \\
$5 \times 10^7$ & $\num{17439282}$ & $t(N)_- - \num{186318}$ & 0.025 & $t(N)_- - \num{1911}$ & 79.011 \\
$6 \times 10^7$ & $\num{20940210}$ & $t(N)_- - \num{263788}$ & 0.031 & $t(N)_- - \num{1787}$ & 72.292 \\
$7 \times 10^7$ & $\num{24442818}$ & $t(N)_- - \num{286343}$ & 0.028 & $t(N)_- - \num{1047}$ & 273.384 \\
$8 \times 10^7$ & $\num{27946958}$ & $t(N)_- - \num{255063}$ & 0.031 & $t(N)_- - \num{3833}$ & 213.208 \\
$9 \times 10^7$ & $\num{31452431}$ & $t(N)_- - \num{335639}$ & 0.041 & $t(N)_- - \num{4121}$ & 823.168 \\\hline
\rule{0pt}{12pt}$1 \times 10^8$ & $\num{34958725}$ & $t(N)_- - \num{342699}$ & 0.027 & $t(N)_- - \num{2785}$ & 331.221 \\
$2 \times 10^8$ & $\num{70064782}$ & $t(N)_- - \num{738180}$ & 0.063 & $t(N)_- - \num{4800}$ & 4531.127 \\
$3 \times 10^8$ & $\num{105218403}$ & $t(N)_- - \num{956003}$ & 0.068 & $t(N)_- - \num{3502}$ & 2488.738 \\
$4 \times 10^8$ & $\num{140401212}$ & $t(N)_- - \num{1264714}$ & 0.073 & $t(N)_- - \num{6111}$ & 4852.155 \\
$5 \times 10^8$ & $\num{175605266}$ & $t(N)_- - \num{1645121}$ & 0.113 & $t(N)_- - \num{13029}$ & 12647.108 \\
$6 \times 10^8$ & $\num{210825848}$ & $t(N)_- - \num{1801197}$ & 0.149 & $t(N)_- - \num{7372}$ & 7154.594 \\
$7 \times 10^8$ & $\num{246059851}$ & $t(N)_- - \num{1925394}$ & 0.123 & $t(N)_- - \num{13808}$ & 12781.331 \\
$8 \times 10^8$ & $\num{281305291}$ & $t(N)_- - \num{2487332}$ & 0.147 & $t(N)_- - \num{17305}$ & 8573.188 \\
$9 \times 10^8$ & $\num{316560601}$ & $t(N)_- - \num{3137853}$ & 0.153 & $t(N)_- - \num{14555}$ & 23058.731 \\\hline
\end{tabular}
\bigskip

\caption{For sample values of $N \in [10^5, 10^9]$, the performance of the fast greedy algorithm (using heuristically chosen $t\in [t_0(N),t_1(N)]$), and the standard greedy algorithm (using  exhaustively computed $t_1(N)$), compared against the lower bound $t(N)_-$ obtained from the linear programming method of \Cref{linprog-sec}. All computations were performed on an Intel i9-13900KS CPU with 24 cores. The fast heuristic computations were single-threaded computations, while exhaustive greedy computations used 32 threads running on 24 cores.}\label{greedy-table}
\end{table}

\section{Linear programming}\label{linprog-sec}

It turns out that linear programming and integer programming methods are quite effective at bounding $M(N,t)$, both from above and below.  The starting point is the following integer program interpretation of $M(N,t)$. For any $t,N$, let $J_{t,N}$ be the collection of all $j \geq t$ that divide $N!$, and which do not have any proper factor $j' < j$ that is also greater than or equal to $t$.  For instance,
$$ J_{4,5} = \{ 4, 5, 6, 9 \}.$$

\begin{proposition}[Integer programming description of $t(N)$]\label{fip}  For any $N,t \geq 1$, $M(N,t)$ is the maximum value of
\begin{equation}\label{mj-sum}
   \sum_{j \in J_{t,N}} m_j
\end{equation}
where the $m_j$ are non-negative integers subject to the constraints
\begin{equation}\label{constraints}
  \sum_{j \in J_{t,N}} m_j \nu_p(j) \leq \nu_p(N!)
\end{equation}
for all primes $p \leq N$.
\end{proposition}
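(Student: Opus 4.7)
The plan is to establish $M(N,t) = M^*(N,t)$, where $M^*(N,t)$ denotes the maximum value of the integer program, by exhibiting explicit constructions in both directions. The structure of $J_{t,N}$ as the set of minimal admissible divisors of $N!$ is exactly what makes both directions transparent.

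For the easy inequality $M^*(N,t) \leq M(N,t)$, I would take any feasible solution $(m_j)_{j \in J_{t,N}}$ and form the multiset $\tuple$ containing $m_j$ copies of each $j \in J_{t,N}$. Every element is at least $t$ by the defining property of $J_{t,N}$. For each prime $p$, the constraints \eqref{constraints} give $\nu_p(\prod \tuple) = \sum_j m_j \nu_p(j) \leq \nu_p(N!)$; the primes $p > N$ contribute vacuously since no $j \in J_{t,N}$ is divisible by them (each such $j$ divides $N!$). By the fundamental theorem of arithmetic this means $\prod \tuple \mid N!$, so $\tuple$ is a $t$-admissible subfactorization of $N!$ of cardinality $\sum_j m_j$, proving this direction.

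For the reverse inequality $M(N,t) \leq M^*(N,t)$, I would begin with an optimal $t$-admissible subfactorization $\tuple$ of $N!$ of cardinality $M(N,t)$ and reduce each element into $J_{t,N}$ while preserving the cardinality. For each $a \in \tuple$ we have $a \mid \prod \tuple \mid N!$, so $a$ divides $N!$. Let $a'$ be the smallest divisor of $a$ with $a' \geq t$; this exists because $a$ itself is a candidate. Then $a' \mid N!$, $a' \geq t$, and $a'$ admits no proper divisor $\geq t$, since any such divisor would also be a proper divisor of $a$, contradicting minimality. Hence $a' \in J_{t,N}$. Replacing each $a$ by $a'$ yields a multiset $\tuple'$ supported in $J_{t,N}$ of the same cardinality, and $\prod \tuple' \mid \prod \tuple \mid N!$ since replacing each element by a divisor only shrinks the product. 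Letting $m_j$ be the multiplicity of $j$ in $\tuple'$ then produces a feasible IP solution of objective value $|\tuple'| = M(N,t)$.

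The only real subtlety is checking that the reduced factor $a'$ genuinely lies in $J_{t,N}$, and this is immediate from its minimality among divisors of $a$ that are at least $t$. Otherwise the argument is a routine bookkeeping of prime multiplicities via the fundamental theorem of arithmetic, so I do not expect any serious obstacle. The substantive content of the proposition is conceptual rather than technical: it reformulates the combinatorial optimization defining $M(N,t)$ as a concrete integer (and, upon relaxation, linear) program over the finite index set $J_{t,N}$, which is the form needed by the computational methods developed in the rest of this section.
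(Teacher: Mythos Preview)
Your proof is correct and follows essentially the same approach as the paper's: build a subfactorization from any feasible $(m_j)$ for one direction, and for the other direction reduce each factor of an optimal subfactorization to a minimal divisor $\geq t$ (the paper phrases this as iteratively replacing factors by proper factors, which amounts to the same thing). Your treatment is slightly more explicit in verifying that the minimal divisor lands in $J_{t,N}$ and in handling primes $p>N$, but the substance is identical.
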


\begin{proof}  If $m_j, j \in J_{t,N}$ are non-negative integers obeying \eqref{constraints}, then clearly
  \begin{equation}\label{mj-subfac}
    \prod_{j \geq t} j^{m_j}
  \end{equation}
is a $t$-admissible subfactorization of $N!$, so that $M(N,t)$ is greater than or equal to \eqref{mj-sum}. Conversely, suppose that $M(N,t) \geq M$, thus we have a $t$-admissible subfactorization of $N!$ into $M$ factors.  Clearly, each of these factors $j$ is at least $t$, and divides $N!$.
If one of these factors $j$ has a proper factor $j' < j$ that is greater than or equal to $t$, then we can replace the factor $j$ by the factor $j'$ in the subfactorization, and still obtain a $t$-admissible subfactorization of $N!$.  Iterating this, we may assume without loss of generality that all the factors $t$ lie in $J_{t,N}$.  We can then express this subfactorization as a product \eqref{mj-subfac}, and by computing $p$-valuations we conclude the constraints \eqref{constraints}.  The claim follows.
\end{proof}

This integer program formulation can be used, when combined with standard packages such as Gurobi~\cite{gurobi} or \texttt{lp\_solve}~\cite{lpsolve}, to compute $M(N,t)$ (and hence $t(N)$) precisely for any specific $N,t$ with $N$ as large as $10^4$, though in practice it is better to first use faster methods (which we discuss below) to control these quantities first, using integer programming as a last resort when these faster methods fail to achieve the desired result.

For larger $N$, the sets $J_{t,N}$ become somewhat large, and the integer program becomes computationally expensive.  For the purposes of lower bounding $M(N,t)$, one can arbitrarily replace $J_{t,N}$ with a smaller set (effectively setting $m_j=0$ for all $j$ outside this set) to speed up the integer program; empirically we have found that the set $\{ j: t \leq j \leq N \}$ is a good choice, as it appears to give the same bounds while being significantly faster.

For upper bounds, we can relax the integer program to a linear program.  Let $M_\R(N,t)$ denote the maximum value of \eqref{mj-sum} where the $m_j, j \in J_{t,N}$ are now non-negative \emph{real} numbers obeying \eqref{constraints}.  Clearly we have the upper bound
$$ M(N,t) \leq M_\R(N,t)$$
which can be improved slightly to
\begin{equation}\label{lp-upper}
  M(N,t) \leq \lfloor M_\R(N,t)\rfloor
\end{equation}
since $M(N,t)$ is an integer.  We refer to these bounds as the \emph{linear programming upper bounds}.

The quantity $M_\R(N,t)$ can be computed by standard linear programming methods; in particular, upper bounds on $M_\R(N,t)$ can be obtained by solving a dual linear program involving some weights $w_p, p \leq N$ that obey constraints for each $j \in J_{t,N}$.  In fact we can restrict attention to those constraints with $j$ in the range $t \leq j \leq N$:

\begin{proposition}[Dual description of $M_\R(N,t)$]\label{dual-desc} For any $N,t \geq 1$ with $t \leq N/2$, $M_\R(N,t)$ is the minimum value of
\begin{equation}\label{hyp}
    \sum_{p \leq N} w_p \nu_p(N!)
\end{equation}
where $w_p$ are non-negative reals for primes $p \leq N$
subject to the constraints that the $w_p$ are weakly increasing, thus
\begin{equation}\label{wp-decrease}
w_{p_2} \geq w_{p_1}
\end{equation}
whenever $p_2 \geq p_1$, and
\begin{equation}\label{pj}
  \sum_{p \leq N} w_p \nu_p(j) \geq 1
 \end{equation}
for all $t \leq j \leq N$.  In particular, if
\begin{equation}\label{hyp-low}
  \sum_{p \leq N} w_p \nu_p(N!) < N
\end{equation}
then $t(N) < t$.
\end{proposition}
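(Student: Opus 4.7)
The plan is to derive the proposition from standard linear programming duality applied to the LP relaxation of the integer program in \Cref{fip}, combined with two feasibility-preserving simplifications of the resulting dual LP.

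By strong LP duality applied to the primal LP of \Cref{fip} (with the $m_j \geq 0$ now ranging over reals), one obtains
$$ M_\R(N,t) = \min \Bigl\{ \sum_{p \leq N} w_p \nu_p(N!) : w_p \geq 0,\ \sum_{p \leq N} w_p \nu_p(j) \geq 1 \text{ for all } j \in J_{t,N} \Bigr\}, $$
with the minimum attained; strong duality is standard since both LPs are finite and feasible. The first simplification replaces the constraint set $J_{t,N}$ by $[t,N]$. The two minima coincide: on one hand, $J_{t,N} \subseteq [t,N]$, so enlarging the constraint set can only raise the minimum; on the other hand, any $j \in [t,N]$ satisfies $j \mid N!$ (as $j \leq N$), and iteratively passing to a proper factor $\geq t$ produces some $j' \in J_{t,N}$ with $j' \mid j$, so by non-negativity of $w$ together with $\nu_p(j') \leq \nu_p(j)$, the constraint at $j'$ implies the constraint at $j$.

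The second, and main, simplification is to further impose the monotonicity requirement $w_{p_2} \geq w_{p_1}$ for all primes $p_1 < p_2 \leq N$. Since this restricts the feasible region, the resulting minimum is at least the previous one; for the reverse inequality, one must construct from any feasible $w$ a monotonic $\tilde w$ with no larger objective. The natural candidate is the monotone minorant $\tilde w_p \coloneqq \min \{w_q : q \text{ prime},\ p \leq q \leq N\}$, which is weakly increasing by construction and satisfies $\tilde w_p \leq w_p$, so $\sum_p \tilde w_p \nu_p(N!) \leq \sum_p w_p \nu_p(N!)$. The main obstacle will be verifying that $\tilde w$ remains feasible: for each $j \in [t,N]$, writing $\tilde w_p = w_{q(p)}$ for an argmin $q(p) \geq p$, one has $\sum_p \tilde w_p \nu_p(j) = \sum_q w_q \nu_q(j'')$, where $j'' \coloneqq \prod_{p \mid j} q(p)^{\nu_p(j)} \geq j \geq t$. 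When $j'' \leq N$, feasibility is immediate from the constraint at $j'' \in [t,N]$; the troublesome case is $j'' > N$, where one must exhibit a divisor $j''' \mid j''$ with $j''' \in [t,N]$. Here the hypothesis $t \leq N/2$ enters: if some $q(p) \geq t$ for a $p \mid j$ then $q(p)$ itself is such a divisor (as $q(p) \leq N$), while otherwise $j''$ is $(t-1)$-smooth and a partial-product argument on its prime factors, exploiting $t \leq N/2$, supplies a divisor in $[t,N]$. Once $j'''$ is found, $\sum_p w_p \nu_p(j'') \geq \sum_p w_p \nu_p(j''') \geq 1$ yields feasibility of $\tilde w$.

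The final assertion is then immediate: since $M(N,t) \leq M_\R(N,t)$ (the integer program is a restriction of its LP relaxation), any monotonic feasible $w$ with $\sum_p w_p \nu_p(N!) < N$ forces $M_\R(N,t) < N$, hence $M(N,t) < N$, whence $t(N) < t$ by the definition of $M(N,t)$.
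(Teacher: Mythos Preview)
Your proof has two genuine gaps, both in the ``simplifications.''

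\textbf{Gap in the first simplification.} The claimed inclusion $J_{t,N} \subseteq [t,N]$ is false. Take $N=20$, $t=10$: then $49 = 7^2$ divides $20!$ and has only $7 < 10$ as a proper factor, so $49 \in J_{10,20}$, yet $49 > 20$. (Indeed the paper's own example $J_{4,5} = \{4,5,6,9\}$ already exhibits this with $9>5$, though there $t > N/2$.) Consequently the dual over $[t,N]$ may have \emph{strictly more} feasible points than the dual over $J_{t,N}$, and your argument only gives one inequality between the two minima.

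\textbf{Gap in the second simplification.} The monotone minorant $\tilde w_p = \min_{q \geq p} w_q$ of an arbitrary feasible $w$ need not remain feasible, and the partial-product argument you sketch does not repair this. Concretely, let $N=100$, $t=50$, and set $w_p = 1$ for $p \neq 47$, $w_{47} = 0$. This is feasible for the constraints over $[50,100]$: the only $j$ in that range divisible by $47$ is $j=94=2\cdot 47$, and $w_2 + w_{47} = 1 \geq 1$; all other $j$ have $\sum_p w_p \nu_p(j) = \Omega(j) \geq 1$. Now $\tilde w_2 = w_{47} = 0$, so for $j = 64 = 2^6$ one gets $\sum_p \tilde w_p \nu_p(64) = 0 < 1$. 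In your notation $j'' = 47^6$, whose divisors are $1, 47, 47^2, \ldots$, none of which lie in $[50,100]$; the partial products jump from $47 < t$ directly to $47^2 > N$, and the hypothesis $t \leq N/2$ does not prevent this.

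The underlying issue is that the two simplifications cannot be decoupled: the intermediate LP (constraints over $[t,N]$, no monotonicity) does \emph{not} compute $M_\R(N,t)$. The paper handles this by treating the two directions asymmetrically. For $\geq$, it shows (via Bertrand's postulate) that monotonicity together with the $[t,N]$ constraints forces the constraint \eqref{pj} for \emph{all} $j \geq t$, in particular for $j \in J_{t,N}$. For $\leq$, it does not attempt to monotonize an arbitrary feasible $w$; instead it enlarges the primal index set to $J'_{t,N} = \{j \geq t : j \mid N!\}$ (unchanged primal value), takes the dual there, and then proves by a perturbation argument that the \emph{optimal} dual solution over $J'_{t,N}$ is already monotone: if $w_{p_1} > w_{p_2}$ with $p_1 < p_2$, one decreases $w_{p_1}$ slightly and uses the constraint at $(p_2/p_1)j \in J'_{t,N}$, with a separate case analysis (using $t \leq N/2$) when $(p_2/p_1)j \nmid N!$.
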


The requirement $t \leq N/2$ applies in practice, since by \eqref{trivial} we have $t(N) \leq N/2$ except possibly for the small cases $N \leq 5$.

\begin{proof}  Suppose first that $w_p$ are non-negative reals obeying \eqref{pj} for all $t \leq j \leq N$.
We claim that \eqref{pj} in fact holds for all $j \geq t$, not just for $t \leq j \leq N$.  Indeed, if this were not the case, consider the first $j > N$ where \eqref{pj} fails.  Take a prime $p$ dividing $j$ and replace it by a prime in the interval $[p/2,p)$ which exists by Bertrand's postulate (or remove $p$ entirely, if $p=2$); this creates a new $j'$ in $[j/2,j)$ which is still at least $t$.  By the weakly increasing hypothesis on $w_p$, we have
  $$ \sum_p w_p \nu_p(j) \geq \sum_p w_p \nu_p(j')$$
  and hence by the minimality of $j$ we have
  $$ \sum_p w_p \nu_p(j) > 1,$$
  a contradiction.

Now let $m_j, j \in J_{t,N}$ be non-negative reals obeying \eqref{constraints}.  Multiplying each constraint in \eqref{constraints} by $w_p$ and summing, we conclude from \eqref{pj} that
$$ \sum_{j \in J_{t,N}} m_j  \leq \sum_{j \in J_{t,N}} m_j \sum_{p \leq N} w_p \nu_p(j) \leq \sum_{p \leq N} w_p \nu_p(N!)$$
and hence \eqref{hyp} is an upper bound for $M_\R(N,t)$.

In the opposite direction, we need to locate weakly increasing non-negative weights $w_p$ obeying \eqref{pj} for $t \leq j \leq N$ for which
\begin{equation}\label{wpm}
  \sum_{p \leq N} w_p \nu_p(N!) = M_\R(N,t).
\end{equation}
To do this, we first make the technical observation that in the definition of $M_\R(N,t)$, we can enlarge the index set $J_{t,N}$ to the larger set $J'_{t,N}$ of natural numbers $j \geq t$ that divide $N!$.  This follows by repeating the proof of \Cref{fip}: if $m_j$ were non-zero for some $j \geq t$ dividing $N!$ that had a proper factor $j' \geq t$, then one could transfer the mass of $m_j$ to $m_{j'}$ (i.e., replace $m_{j'}$ with $m_{j'}+m_j$ and then set $m_j$ to zero) without affecting \eqref{constraints}.

If we then invoke the duality theorem of linear programming, we can find weights $w_p \geq 0$ for $p \leq N$ obeying \eqref{wpm} as well as \eqref{pj} for all $j \in J'_{t,N}$ (not just $j \in J_{t,N}$).  To conclude the proof, it suffices to show that the $w_p$ are weakly increasing. Suppose for contradiction that there are primes $p_1 < p_2 \leq N$ such that $w_{p_1} > w_{p_2}$.  Let $\eps>0$ be a sufficiently small quantity, and define the modification $\tilde w_p$ to $w_p$ by decreasing $w_{p_1}$ by $\eps$ and leaving all other $w_p$ unchanged.  This decreases the left-hand side of \eqref{wpm}, so to get a contradiction with the already-obtained lower bound, it suffices to show that
  $$ \sum_p \tilde w_p \nu_p(j) \geq 1$$
  for all $j \geq t$ dividing $N!$.  If $j$ has a proper factor $j'$ that is still at least $t$, the condition for $j$ would follow from that of $j'$, so we may restrict attention to the case where $j$ has no proper factor greater than or equal to $t$.  We can assume that $j$ is divisible by $p_1$, otherwise the claim follows from \eqref{pj}.  For $\eps$ small enough, one has
  $$ \sum_p \tilde w_p \nu_p(j) \geq \sum_p w_p \nu_p\left(\frac{p_2}{p_1} j\right);$$
since $\frac{p_2}{p_1} j \geq j \geq t$, we are done unless $\frac{p_2}{p_1} j$ is not divisible by $N!$. This only occurs when $\nu_{p_2}(j) = \nu_{p_2}(N!)$, but then
$$ \frac{j}{p_1} \geq p_2^{\nu_{p_2}(N!)} \geq p_2^{\lfloor N/p_2\rfloor}.$$
We claim that the right-hand side is at least $N/2$.  This is clear for $p_2 \geq N/2$, and also for $\sqrt{N/2} \leq p_2 < N/2$ since $\lfloor N/p_2\rfloor \geq 2$ in this case.  For $p_2 < \sqrt{N/2}$ one has
$$  p_2^{\lfloor N/p_2\rfloor} \geq 3^{\lfloor \sqrt{2N} \rfloor} \geq \frac{N}{2}$$
for all $N$ (here we use that $3^k \geq \frac{(k+1)^2}{4}$ for $k \geq 1$).  Thus in all cases we have $j/p_1 \geq N/2 \geq t$, contradicting the hypothesis that $j$ has no proper factor that is at least $t$.
\end{proof}

\Cref{dual-desc} allows for a fast method to compute $M_\R(N,t)$ by a linear program.  In practice, we have found that even if we drop the explicit constraint \eqref{wp-decrease} that the $w_p$ are weakly decreasing (or equivalently, if we return to the primal problem of optimizing \eqref{mj-sum} for real $m_j \geq 0$ obeying \eqref{constraints}, but now with $j$ restricted to $t \leq j \leq N$), the optimal weights $w_p$ produced by the resulting linear program will be weakly decreasing anyway, although we could not prove this empirically observed fact rigorously.  For instance, when $N = 3 \times 10^5$ and $t = N/3$, this linear program produces non-decreasing weights which certify that
$$ M_\R(N,t) = N + 445.83398\dots$$
and hence by \eqref{lp-upper}
\begin{equation}\label{mnt}
  M(N,t) \leq N + 445
\end{equation}
for this choice of $N,t$.  In fact, as discussed later in this section, we know that equality holds in this particular case.  For $N \leq 10^4$, we found that the linear programming upper bound on $t(N)$ is tight except for
$N=155$, $765$, $1528$, $1618$, $1619$, $2574$, $2935$, $3265$, $5122$, $5680$, and $9633$, for which integer programming was needed to precisely compute $t(N)$.  The values of $t(N)$ thus computed are  plotted in \Cref{fig-long}.

The linear programming upper bound is sufficiently tight to establish that $t(N)<N/3$ for $N=43631$, which proves that the threshold $43632$ of \Cref{main}(iii) is best possible.
The dual certificate for this computation\footnote{\url{https://github.com/teorth/erdos-guy-selfridge/blob/main/src/python/verification/prove43631.py}} was verified in exact arithmetic.
The exact bound obtained is $M(43631,14544) \le 43631 - \frac{47}{1257}$.

\begin{figure}[!ht]
  \centering
  \includegraphics[width=0.8\textwidth]{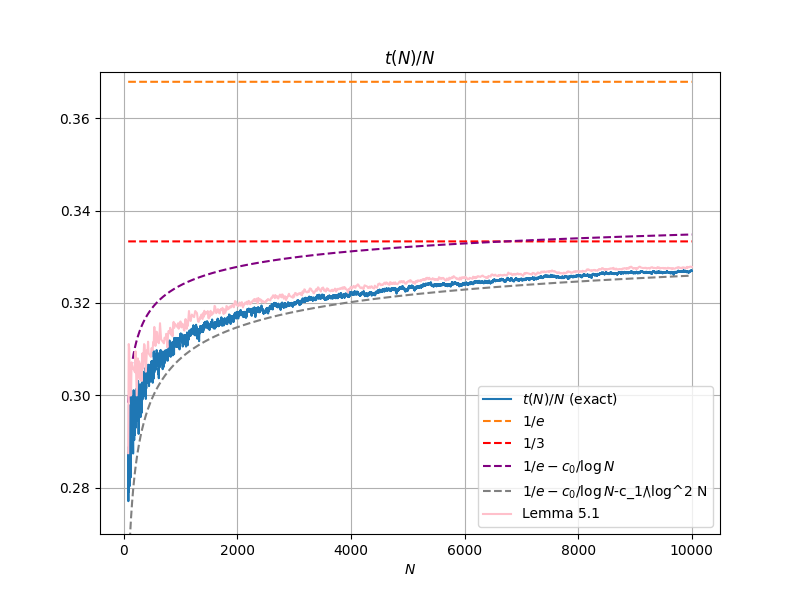}
  \vspace{-8pt}
  \caption{Exact values of $t(N)/N$ for $80 \leq N \leq 10^4$, obtained via integer programming.  The upper bound from \Cref{upper-crit} is surprisingly sharp, as is the refined asymptotic $1/e - c_0/\log N - c_1/\log^2 N$, though the cruder asymptotics $1/e$ or $1/e - c_0/\log N$ are significantly poorer approximations.}
  \label{fig-long}
  \end{figure}

With integer programming, we could also establish\footnote{Explicit factorizations in this range can be found at \url{https://github.com/teorth/erdos-guy-selfridge/tree/main/Data/factorizations}.} $t(N) \geq N/3$ for all $43632 \leq N \leq 8 \times 10^4$.
  In particular, when combined with the greedy algorithm computations from the previous section, this resolves \Cref{main}(iii) except in the asymptotic range $N > 10^{14}$, where it suffices to establish the lower bound $t(N) \geq N/3$.

For $N \geq 10^4$, the integer programming method to lower bound $M(N,t)$ becomes slow.  We found two faster methods to give slightly weaker lower bounds on this quantity, which we call the ``floor+residuals'' method, and the ``smooth factorization'' method.

The ``floor+residuals'' method proceeds by first running the primal linear program to find the real $m_j \geq 0$ for $t \leq j \leq N$ that maximize \eqref{mj-sum} subject to \eqref{constraints}.  The integer parts\footnote{This procedure turns out to be subtly dependent on the specific linear programming implementation, both due to roundoff errors, and also because the extremizer of the linear program can be non-unique, with different LP solvers arriving at different extremizers.} $\lfloor m_j \rfloor$ will then of course also obey \eqref{constraints} and thus form a subfactorization; but this subfactorization is somewhat inefficient because there can be a $p$-surplus of $\nu_p(N!) - \sum_{j \geq t} \lfloor m_j \rfloor \nu_p(j)$ at various primes $p \leq N$.  We then apply the greedy algorithm of the previous section to fashion as many factors greater than or equal to $t$ from these residual primes, to obtain our final subfactorization that provides a lower bound on $M(N,t)$.

The floor+residuals method is fast and highly accurate for small and medium $N$ (e.g., $N \leq 3 \times 10^5$).  For instance:
\begin{itemize}
  \item The method computes $t(N)$ exactly for all $N \leq 600$, with the sole exception of $N=155$; see \Cref{fig-zoom}.  When $N=155$, the floor+residuals method provides a subfactorization that certifies $t(155) \geq 45$, while the linear programming upper bound \eqref{lp-upper} gives $t(155) \leq 46$.  Integer programming can then be deployed to confirm $t(155)=45$.
  \item The method establishes $t(N) \geq N/3$ for all $43632 \leq N \leq 4.5 \times 10^4$; see \Cref{fig-surplus}.
  \item The method also verifies the lower bound $t(N)\geq N/3$ for $N=41006$, while the linear programming upper bound \eqref{lp-upper} shows that $t(N) \geq N/3$ fails for all smaller $N$ except for $N=1,2,3,4,5,6,9$; see \Cref{fig-surplus}.
  \item The method establishes the matching lower bound
\begin{equation}\label{mnt-lower}
M(N,t) \geq N + 455
\end{equation}
to \eqref{mnt} when $N = 3 \times 10^5$ and $t=N/3$.
\end{itemize}

For larger $N$ (e.g., $3 \times 10^5 \leq N \leq 9 \times 10^8$), the floor+residuals method becomes slow due to the large number $\pi(N)$ of variables $w_p$ that are involved in the linear program.  We developed a \emph{smooth factorization lower bound} method\footnote{See \url{https://github.com/teorth/erdos-guy-selfridge/tree/main/src/mojo} for details.} to handle this range, by first using the greedy approach from the previous section to allocate all factors involving $p \geq \sqrt{N}$, and then using a version of the floor+residuals method to handle the smaller primes $p < \sqrt{N}$ (with $j$ now restricted to ``smooth'' numbers - numbers whose prime factors are less than $\sqrt{N}$).  Thus, the linear program now involves only $\pi(\sqrt{N})$ variables $w_p$, and runs considerably faster in ranges such as $10^4 < N \leq 9 \times 10^8$.  The lower bounds obtained by this method remain quite close to the linear programming upper bound \eqref{lp-upper} (or the floor+residuals method), and outperforms the greedy algorithm; see Table \ref{long-table} and \Cref{fig-longer}.

  \begin{figure}
    \centering
    \includegraphics[width=0.75\textwidth]{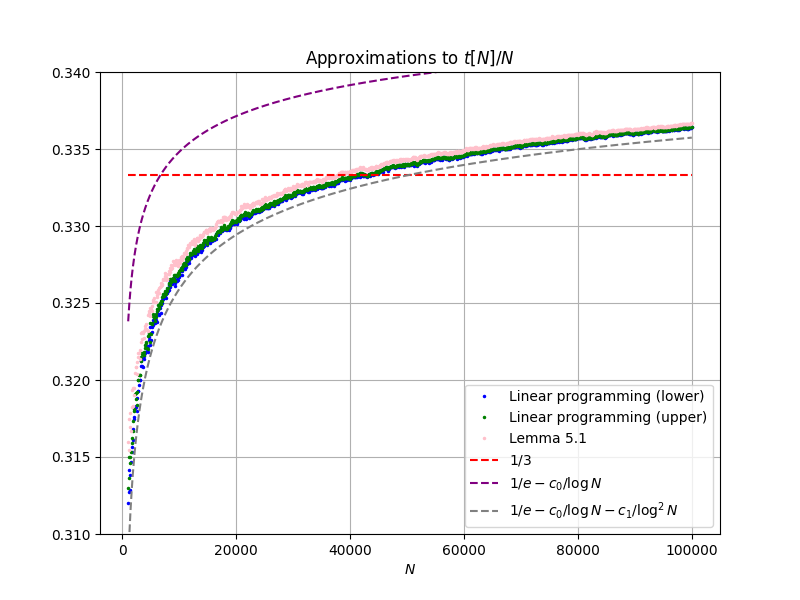}
    \vspace{-12pt}
    \caption{The linear programming upper bound and smooth factorization lower bound on $t(N)/N$ for $10^3 \leq N \leq 10^5$ that are multiples of $100$. The refined asymptotic $1/e - c_0/\log N - c_1/\log^2 N$ is now a slight underestimate, hinting at further terms in the asymptotic expansion. For another view of the situation near the crossover point $N=43632$ for \Cref{main}(iii), see \Cref{fig-surplus}.}
    \label{fig-longer}
    \end{figure}

    \begin{figure}
      \centering
      \includegraphics[width=0.8\textwidth]{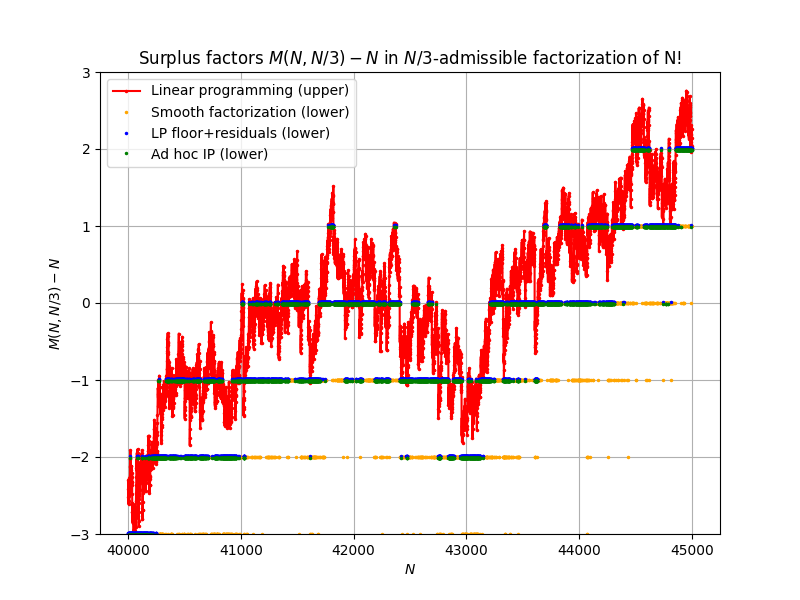}
      \vspace{-12pt}
      \caption{Bounds on $M(N,N/3)-N$ for $4 \times 10^4 \leq N \leq 4.5 \times 10^4$.  The linear programming upper bound (red) can be rounded down to the nearest integer, as per \eqref{lp-upper}.  In this range, at least one of the integer programming (green, implemented in an \emph{ad hoc} fashion) and the floor+residuals (blue) methods turn out to match this bound exactly, with the smooth factorization method (orange) not far behind.}
      \label{fig-surplus}
      \end{figure}

      \begin{figure}
        \centering
        \includegraphics[width=0.8\textwidth]{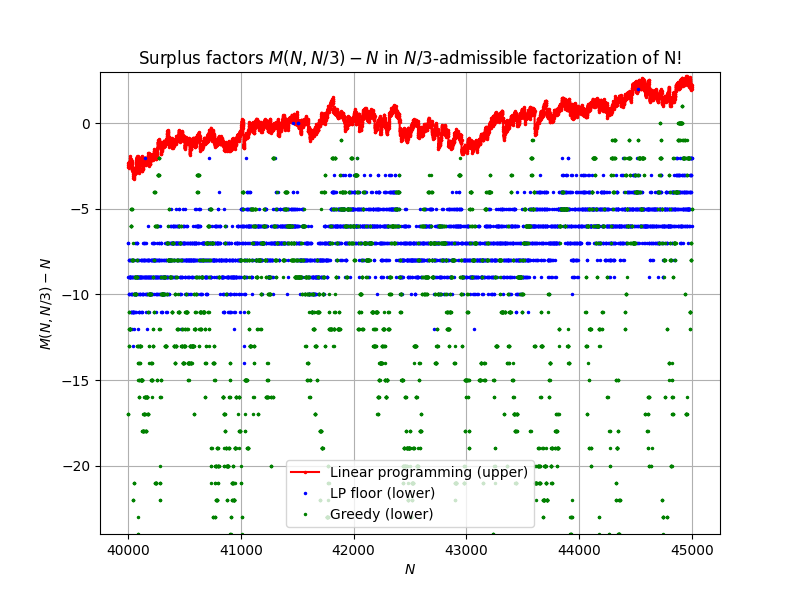}
        \vspace{-16pt}
        \caption{Two weaker lower bounds on $M(N,N/3)-N$ for $4 \times 10^4 \leq N \leq 4.5 \times 10^4$ beyond those depicted in \Cref{fig-surplus}: the floor of a linear programming method (blue) without gathering residuals, and the greedy method (green), which performs erratically but usually gives worse bounds than any of the other methods in this range.}
        \label{fig-surplus2}
        \end{figure}

\begin{figure}
  \centering
  \includegraphics[width=0.8\textwidth]{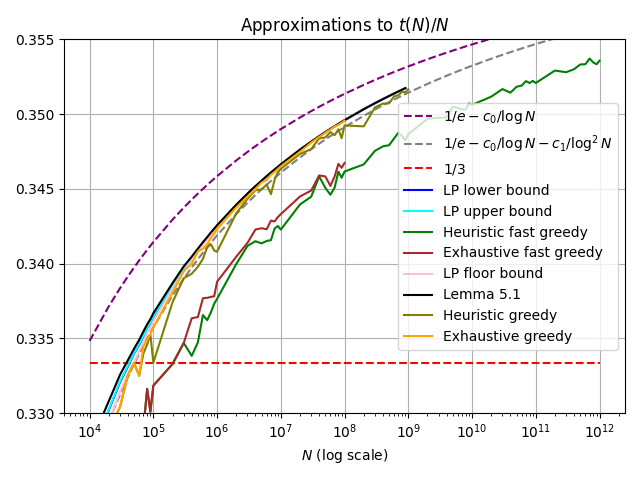}
  \vspace{-16pt}
  \caption{Lower bounds on $t(N)/N$ coming from various linear programming and greedy methods for sample values of $N$ in the range $10^4 \leq N \leq 10^{12}$, as well as upper bounds coming from linear programming and \Cref{upper-crit}.  Even if one simply takes floors from the linear program and discards residuals, the resulting lower bound is asymptotically almost indistinguishable from the upper bound. }\label{fig-verylarge}
\end{figure}

    \begin{table}[ht]
      \centering
      \begin{tabular}{|c|c|c|c|c|}
      \hline
\rule{0pt}{14pt} $N$ & $t(N)_-$ & $t(N)_+$ & \Cref{upper-crit} & $\frac{N}{e}-\frac{c_0 N}{\log N} - \frac{c_1 N}{\log^2 N}$  \\[5pt]
      \hline
\rule{0pt}{12pt}$1 \times 10^5$ & $\num{33642}$ & $t(N)_- + 4$ & $t(N)_- + 26$ & $t(N)_- - \num{69}$ \\
$2 \times 10^5$ & $\num{67703}$ & $t(N)_- + 1$ & $t(N)_- + 36$ & $t(N)_- - \num{130}$ \\
$3 \times 10^5$ & $\num{101903}$ & $t(N)_- + 3$ & $t(N)_- + 42$ & $t(N)_- - \num{206}$ \\
$4 \times 10^5$ & $\num{136143}$ & $t(N)_- + 6$ & $t(N)_- + 43$ & $t(N)_- - \num{248}$ \\
$5 \times 10^5$ & $\num{170456}$ & $t(N)_- + 3$ & $t(N)_- + 46$ & $t(N)_- - \num{310}$ \\
$6 \times 10^5$ & $\num{204811}$ & $t(N)_- + 4$ & $t(N)_- + 47$ & $t(N)_- - \num{373}$ \\
$7 \times 10^5$ & $\num{239187}$ & $t(N)_- + 9$ & $t(N)_- + 54$ & $t(N)_- - \num{425}$ \\
$8 \times 10^5$ & $\num{273604}$ & $t(N)_- + 6$ & $t(N)_- + 64$ & $t(N)_- - \num{490}$ \\
$9 \times 10^5$ & $\num{308029}$ & $t(N)_- + 13$ & $t(N)_- + 70$ & $t(N)_- - \num{539}$ \\
\hline
\rule{0pt}{12pt}$1 \times 10^6$ & $\num{342505}$ & $t(N)_- + 3$ & $t(N)_- + 62$ & $t(N)_- - \num{619}$ \\
$2 \times 10^6$ & $\num{687796}$ & $t(N)_- + 4$ & $t(N)_- + 87$ & $t(N)_- - \num{1180}$ \\
$3 \times 10^6$ & $\num{1033949}$ & $t(N)_- + 11$ & $t(N)_- + 107$ & $t(N)_- - \num{1736}$ \\
$4 \times 10^6$ & $\num{1380625}$ & $t(N)_- + 12$ & $t(N)_- + 122$ & $t(N)_- - \num{2286}$ \\
$5 \times 10^6$ & $\num{1727605}$ & $t(N)_- + 4$ & $t(N)_- + 126$ & $t(N)_- - \num{2763}$ \\
$6 \times 10^6$ & $\num{2074962}$ & $t(N)_- + 21$ & $t(N)_- + 152$ & $t(N)_- - \num{3326}$ \\
$7 \times 10^6$ & $\num{2422486}$ & $t(N)_- + 22$ & $t(N)_- + 165$ & $t(N)_- - \num{3819}$ \\
$8 \times 10^6$ & $\num{2770212}$ & $t(N)_- + 29$ & $t(N)_- + 177$ & $t(N)_- - \num{4316}$ \\
$9 \times 10^6$ & $\num{3118129}$ & $t(N)_- + 24$ & $t(N)_- + 173$ & $t(N)_- - \num{4834}$ \\
\hline
\rule{0pt}{12pt}$1 \times 10^7$ & $\num{3466235}$ & $t(N)_- + 12$ & $t(N)_- + 179$ & $t(N)_- - \num{5392}$ \\
$2 \times 10^7$ & $\num{6952243}$ & $t(N)_- + 18$ & $t(N)_- + 234$ & $t(N)_- - \num{10284}$ \\
$3 \times 10^7$ & $\num{10444441}$ & $t(N)_- + 13$ & $t(N)_- + 253$ & $t(N)_- - \num{14975}$ \\
$4 \times 10^7$ & $\num{13940484}$ & $t(N)_- + 64$ & $t(N)_- + 354$ & $t(N)_- - \num{19582}$ \\
$5 \times 10^7$ & $\num{17439282}$ & $t(N)_- + 33$ & $t(N)_- + 356$ & $t(N)_- - \num{24124}$ \\
$6 \times 10^7$ & $\num{20940210}$ & $t(N)_- + 23$ & $t(N)_- + 381$ & $t(N)_- - \num{28610}$ \\
$7 \times 10^7$ & $\num{24442818}$ & $t(N)_- + 37$ & $t(N)_- + 415$ & $t(N)_- - \num{32996}$ \\
$8 \times 10^7$ & $\num{27946958}$ & $t(N)_- + 43$ & $t(N)_- + 445$ & $t(N)_- - \num{37417}$ \\
$9 \times 10^7$ & $\num{31452431}$ & $t(N)_- + 23$ & $t(N)_- + 428$ & $t(N)_- - \num{41882}$ \\
\hline
\rule{0pt}{12pt}$1 \times 10^8$ & $\num{34958725}$ & $t(N)_- + 48$ & $t(N)_- + 482$ & $t(N)_- - \num{46039}$ \\
$2 \times 10^8$ & $\num{70064782}$ & $t(N)_- + 45$ & $t(N)_- + 644$ & $t(N)_- - \num{87837}$ \\
$3 \times 10^8$ & $\num{105218403}$ & $t(N)_- + 41$ & $t(N)_- + 752$ & $t(N)_- - \num{128227}$ \\
$4 \times 10^8$ & $\num{140401212}$ & $t(N)_- + 80$ & $t(N)_- + 887$ & $t(N)_- - \num{167495}$ \\
$5 \times 10^8$ & $\num{175605266}$ & $t(N)_- + 98$ & $t(N)_- + 972$ & $t(N)_- - \num{206175}$ \\
$6 \times 10^8$ & $\num{210825848}$ & $t(N)_- + 68$ & $t(N)_- + 1058$ & $t(N)_- - \num{244391}$ \\
$7 \times 10^8$ & $\num{246059851}$ & $t(N)_- + 89$ & $t(N)_- + 1147$ & $t(N)_- - \num{282167}$ \\
$8 \times 10^8$ & $\num{281305291}$ & $t(N)_- + 92$ & $t(N)_- + 1158$ & $t(N)_- - \num{319607}$ \\
$9 \times 10^8$ & $\num{316560601}$ & $t(N)_- + 101$ & $t(N)_- + 1238$ & $t(N)_- - \num{356927}$ \\
\hline
\end{tabular}
\bigskip

\caption{For sample values of $N \in [10^5, 9 \times 10^8]$, the (remarkably precise) lower and upper bounds $t(N)_- \leq t(N) \leq t(N)_+$ obtained by smooth factorization and linear programming respectively, the (slightly weaker) upper bound on $t(N)$ from \Cref{upper-crit}, and the conjectural approximation $\frac{N}{e} - \frac{c_0 N}{\log N} - \frac{c_1N}{\log^2 N}$ (rounded to the nearest integer). }\label{long-table}
\end{table}

\FloatBarrier

\section{Some upper bounds}\label{upper-sec}

It is easy to check using \eqref{ftoa} that the weights $w_p \coloneqq \frac{\log p}{\log t}$ will obey the conditions \eqref{hyp-low}, \eqref{pj} as long as $0 > \log N! - N \log t$.  This recovers the trivial upper bound \eqref{trivial}.  By adjusting these weights at large primes, one can improve this bound as follows:

\begin{lemma}[Upper bound criterion]\label{upper-crit}  Suppose that $1 \leq t \leq N$ are such that
  \begin{equation}\label{contra}
     \sum_{\frac{t}{\lfloor\sqrt{t}\rfloor} < p \leq N} f_{N/t}(p/N) > \log N! - N \log t,
  \end{equation}
  where $f_{N/t}$ was defined in \eqref{falpha-def}.
  Then $t(N) < t$.
  \end{lemma}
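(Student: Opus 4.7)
The plan is to construct an explicit feasible vector of weights for the dual linear program in \Cref{dual-desc} whose objective falls below $N$ precisely when \eqref{contra} holds. Setting $T \coloneqq t/\lfloor\sqrt t\rfloor$, I take
$$ w_p \coloneqq \begin{cases} \dfrac{\log p}{\log t}, & p \leq T, \\[4pt] \dfrac{1}{\log t}\log\dfrac{t}{\lceil t/p\rceil}, & T < p \leq N. \end{cases} $$
The motivation is that the constraint \eqref{pj} applied to the minimal $j \geq t$ divisible by $p$, namely $j = p\lceil t/p\rceil$, forces $w_p \geq 1 - \log\lceil t/p\rceil/\log t$ once the prime factors of the cofactor $\lceil t/p\rceil$ receive weight $\log q/\log t$; this assignment is legitimate because $p > T$ forces $\lceil t/p\rceil \leq \lfloor\sqrt t\rfloor \leq T$, so the cofactor is itself $T$-smooth. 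Monotonicity in $p$ is clear on each branch, and at the transition $p = T$ both formulas agree with $\log T/\log t$, so the $w_p$ are weakly increasing.

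The main step is to verify $\sum_p w_p \nu_p(j) \geq 1$ for every $t \leq j \leq N$. Factor $j = j_S j_L$, where $j_L \coloneqq \prod_{p > T} p^{\nu_p(j)}$ is the ``large prime'' part and $j_S$ is $T$-smooth, and set $k \coloneqq \Omega(j_L)$. Using \eqref{ftoa} on $j_S$ and the formula for $w_p$, the inequality unwinds to
$$ j_S \cdot t^{k-1} \geq \prod_{p \mid j_L} \lceil t/p \rceil^{\nu_p(j)}. $$
For $k = 0$ this is $j = j_S \geq t$. For $k = 1$, writing $j = j_S p$, it reduces to $j_S \geq \lceil t/p \rceil$, which follows from $j \geq t$ and the integrality of $j_S$. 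For $k \geq 2$, the bound $\lceil t/p\rceil \leq \lfloor\sqrt t\rfloor$ for $p > T$ yields
$$ \prod_{p \mid j_L} \lceil t/p\rceil^{\nu_p(j)} \leq \lfloor\sqrt t\rfloor^k \leq t^{k/2} \leq t^{k-1}, $$
so that the trivial bound $j_S \geq 1$ already suffices. This case analysis — in particular the collapse of the $k \geq 2$ case — is the main (though mild) obstacle, and the threshold $T = t/\lfloor\sqrt t\rfloor$ is engineered precisely so that both the $k = 1$ and $k \geq 2$ cases work out.

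Expanding the objective and splitting the range of $p$ gives
$$ \sum_{p \leq N} w_p \nu_p(N!) = \frac{\log N!}{\log t} - \frac{1}{\log t}\sum_{T < p \leq N} \nu_p(N!)\log\frac{\lceil t/p\rceil}{t/p}. $$
Combining $\nu_p(N!) \geq \lfloor N/p \rfloor$ with the nonnegativity of the logarithmic factor, and recognising that $f_{N/t}(p/N) = \lfloor N/p \rfloor \log(\lceil t/p\rceil/(t/p))$ by \eqref{falpha-def}, the hypothesis \eqref{contra} translates directly into $\sum_p w_p \nu_p(N!) < N$, so \Cref{dual-desc} yields $t(N) < t$.
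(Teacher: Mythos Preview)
Your proof is correct and follows essentially the same route as the paper: the same weights $w_p$, the same three-way case split (your $k=0,1,\geq 2$ matches the paper's ``no large prime / one large prime / at least two large primes'' cases), and the same objective computation. The only cosmetic difference is that you explicitly verify monotonicity of the $w_p$ in order to invoke \Cref{dual-desc} for $t \leq j \leq N$, whereas the paper simply checks \eqref{pj} for all $j \in J_{t,N}$ directly (its argument in fact covers all $j \geq t$).
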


\begin{proof} We introduce the weights
  $$
w_p \coloneqq  \begin{cases}
  \frac{\log p}{\log t}, & p  \leq \frac{t}{\lfloor \sqrt{t} \rfloor}, \\
  \frac{\log p}{\log t} - \frac{\log \frac{\lceil t/p \rceil}{t/p}}{\log t} = 1 - \frac{\log \lceil t/p \rceil}{\log t}, & p  > \frac{t}{\lfloor \sqrt{t} \rfloor}.
\end{cases}
$$
Clearly the $w_p$ are non-negative.  It will suffice to verify the conditions \eqref{pj}, \eqref{hyp-low}.  If $j \in J_{t,N}$ contains no prime factor $p > \frac{t}{\lfloor \sqrt{t} \rfloor}$, then from \eqref{ftoa} we have
$$ \sum_p w_p \nu_p(j) = \frac{\sum_p \nu_p(j) \log p}{\log t} = \frac{\log j}{\log t} \geq 1.$$
If $j \in J_{t,N}$ is of the form $j = mp_1$ where $p_1 > \frac{t}{\lfloor \sqrt{t} \rfloor}$ and $m$ contains no prime factor exceeding $\frac{t}{\lfloor \sqrt{t} \rfloor}$, then $m \geq \lceil t/p_1 \rceil$, and we have
\begin{align*}
\sum_p w_p \nu_p(j) &= \frac{\sum_p \nu_p(j) \log p}{\log t}
- \frac{\log \frac{\lceil t/p_1 \rceil}{t/p_1}}{\log t}\\
&= \frac{\log(mp_1)}{\log t} -  \frac{\log \frac{m}{t/p_1}}{\log t} = 1.
\end{align*}
Finally, if $j \in J_{t,N}$ is divisible by two primes $p_1, p_2 > {t}{\lfloor \sqrt{t} \rfloor}$ (possibly equal), then
\begin{align*}
  \sum_p w_p \nu_p(j) &\geq
  1 - \frac{\log \lceil t/p_1 \rceil}{\log t} + 1 - \frac{\log \lceil t/p_1 \rceil}{\log t} \\
  &\geq
  1 - \frac{\log \sqrt{t}}{\log t} + 1 - \frac{\log \sqrt{t}}{\log t} = 1.
\end{align*}
Thus we have verified \eqref{pj} for all $j \in J_{t,N}$.  Finally, from \eqref{ftoa}, \eqref{legendre}, \eqref{contra} we have
\begin{align*}
  \sum_p w_p \nu_p(N!) &= \frac{\sum_p \nu_p(N!) \log p}{\log t} - \sum_{p > \frac{t}{\lfloor \sqrt{t} \rfloor}} \frac{\nu_p(N!) \log \frac{\lceil t/p \rceil}{t/p}}{\log t} \\
  &\leq \frac{\log N!}{\log t} -  \frac{\sum_{p > \frac{t}{\lfloor \sqrt{t} \rfloor}} \lfloor \frac{N}{p} \rfloor \log \frac{\lceil t/p \rceil}{t/p}}{\log t}\\
  &= \frac{\log N!}{\log t} - \frac{\sum_{p > \frac{t}{\lfloor \sqrt{t} \rfloor}} f_{N/t}(p/N)}{\log t} \\
  &< N,
\end{align*}
giving \eqref{hyp}.  The claim follows.
\end{proof}

In practice, \Cref{upper-crit} gives quite good upper bounds on $N$, especially when $N$ is large, although for medium $N$ the linear programming method is superior: see \Cref{fig1}, \Cref{fig1-alt}, and \Cref{fig-zoom}.

\begin{figure}
  \centering
  \includegraphics[width=0.8\textwidth]{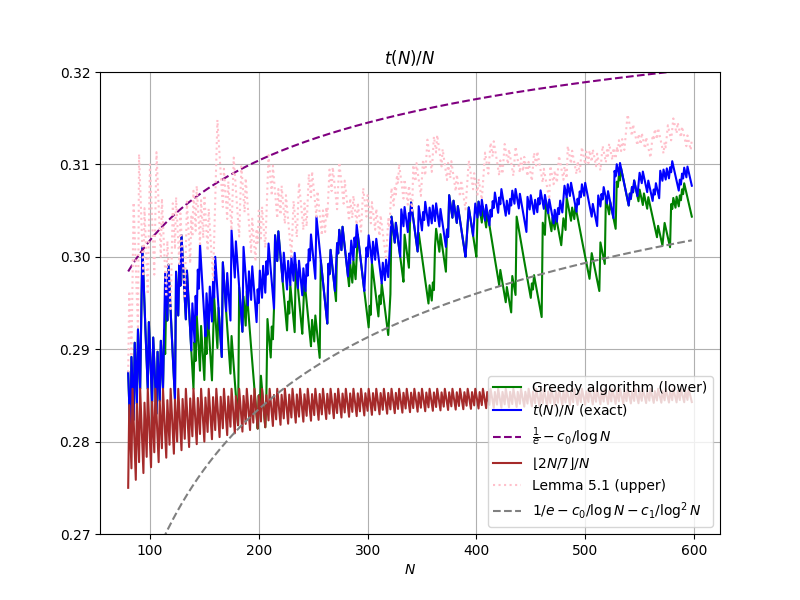}
  \vspace{-8pt}
  \caption{An enlarged version of \Cref{fig1-alt}, displaying the lower bound from the greedy algorithm and the upper bound from \Cref{upper-crit}.  The linear programming upper bound and floor+residual bounds are exact in this region, except for $N=155$ in which the upper bound is off by one.}\label{fig-zoom}
\end{figure}

\begin{figure}
  \centering
  \includegraphics[width=0.8\textwidth]{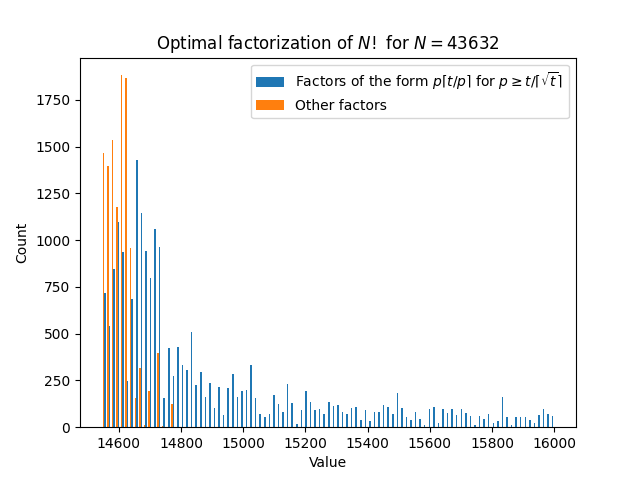}
  \caption{A histogram of an optimal factorization of $N!$ for $N=43632$, demonstrating that $t(N) = N/3+1 = 14545$.  Of the $N$ factors, most ($\num{32147}$) of them are of the form $p \lceil t/p \rceil$ for $p > \frac{t}{\lfloor \sqrt{t} \rfloor}$, thus directly contributing to the left-hand side of \Cref{upper-crit}.  (The factors arising from very large primes lie to the right of the displayed graph, as a long tail of the histogram.)  The remaining $\num{11485}$ factors stay close to $t(N)$, with the largest being $\num{14941}$.
  }\label{fig-factor}
  \end{figure}

We can now prove the upper bound portion of \Cref{main}(iv):

  \begin{proposition}\label{upper-bound}  For large $N$, one has
    $$ \frac{t(N)}{N} \leq \frac{1}{e} - \frac{c_0}{\log N} - \frac{c_1 + o(1)}{\log^2 N}$$
    where $c_0$ is given by \eqref{c0-def} and $c_1$ is given by
    \begin{align}
      c'_1 &\coloneqq \frac{1}{e} \int_0^1 f_e(x) \log \frac{1}{x}\ dx = 0.3702015\dots \label{c1p-def}\\
      c''_1 &\coloneqq \sum_{k=1}^\infty \frac{1}{k}  \log\left( \frac{e}{k} \left\lceil \frac{k}{e} \right\rceil \right) = 1.679578996\dots\label{c1pp-def}\\
      c_1 &\coloneqq c'_1 + c_0 c''_1 - ec_0^2/2 =0.75554808\dots.\label{c1-def}
    \end{align}
  \end{proposition}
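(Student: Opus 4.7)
The plan is to apply \Cref{upper-crit} with a target $t$ just above the asserted bound. For arbitrary $\varepsilon > 0$, take $t = t_{N,\varepsilon} \coloneqq \lceil N(1/e - c_0/\log N - (c_1-\varepsilon)/\log^2 N)\rceil$; verifying the hypothesis \eqref{contra} for this $t$ will give $t(N) < t_{N,\varepsilon}$, and letting $\varepsilon \downarrow 0$ yields the proposition.

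For the right-hand side of \eqref{contra}, combining Stirling's formula \eqref{stirling} with the Taylor expansion $\log(1-x) = -x - x^2/2 + O(x^3)$ applied to $\log(t_{N,\varepsilon}/(N/e))$ produces a quadratic correction of $e^2 c_0^2/2$, yielding
\[ \log N! - N\log t_{N,\varepsilon} = \frac{ec_0\, N}{\log N} + \frac{(e(c_1-\varepsilon) + e^2 c_0^2/2)\, N}{\log^2 N} + O(\log N). \]

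Estimating the left-hand side $S \coloneqq \sum_{t/\lfloor\sqrt{t}\rfloor < p \leq N} f_{N/t}(p/N)$ to matching precision is the main work. Applying \Cref{osc-lemma} (with the classical prime number theorem error $O(N\exp(-c\sqrt{\log N}))$ absorbed into $o(N/\log^K N)$ for any $K$), $S$ is approximated by $N \int_0^1 f_\alpha(x)/(\log N + \log x)\, dx$ where $\alpha \coloneqq N/t$. Expanding $(\log N + \log x)^{-1} = 1/\log N - \log x/\log^2 N + O(\log^2 x/\log^3 N)$ splits this into two pieces: a term $-(N/\log^2 N)\int_0^1 f_\alpha(x) \log x\, dx$ equal to $ec'_1 N/\log^2 N + o(N/\log^2 N)$ by \eqref{c1p-def} and continuity in $\alpha$, and a term $(N/\log N)\int_0^1 f_\alpha(x)\, dx$ requiring a Taylor expansion in $\alpha - e$. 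Substituting $y = \alpha x$ gives $\int_0^1 f_\alpha(x)\, dx = (1/\alpha)\sum_{j \geq 1} G(\alpha/j)$ where $G$ antidifferentiates $g(y) \coloneqq \log(y\lceil 1/y\rceil)$; term-by-term differentiation produces $\sum_{j\geq 1} g(\alpha/j)/j$, which at $\alpha = e$ is exactly $c''_1$ by \eqref{c1pp-def}. Combined with $\alpha - e = e^2 c_0/\log N + O(1/\log^2 N)$, this yields the $1/\log^2 N$ correction to $\int_0^1 f_\alpha(x)\, dx$.

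Matching $1/\log^2 N$ coefficients, the inequality $S > \log N! - N\log t_{N,\varepsilon}$ reduces to an algebraic identity that is precisely encoded in the definition \eqref{c1-def}: the $-ec_0^2/2$ component of $c_1$ cancels against the $e^2 c_0^2/2$ coming from the quadratic Stirling correction, leaving a positive slack proportional to $\varepsilon$ that guarantees \eqref{contra} for large $N$. The main technical obstacle is controlling the error from \Cref{osc-lemma} to $o(N/\log^2 N)$ precision despite $f_\alpha$ having unbounded total variation near $x = 0$; this likely requires either sharper effective prime-counting estimates, or splitting the sum into dyadic ranges of $p$ on which $f_\alpha$ has bounded variation and applying \Cref{osc-lemma} piecewise.
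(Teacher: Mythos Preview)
Your approach follows the paper's closely: apply \Cref{upper-crit} with $t = N(1/e - c_0/\log N - (c_1-\varepsilon)/\log^2 N)$, expand the right side via Stirling, approximate the prime sum $S$ by \Cref{osc-lemma}, and Taylor-expand $1/\log(Nx)$ to separate the $c'_1$ and $c''_1$ contributions.

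On the total-variation obstacle you flag, the paper's fix is simpler than either of your proposals: since $f_{N/t} \geq 0$, one simply discards the primes $p \leq N/\log^3 N$ (this only decreases $S$), and on the remaining range the augmented total variation of $p \mapsto f_{N/t}(p/N)$ is $O(\log^3 N)$, which the classical error $O(N\exp(-c\sqrt{\log N}))$ from \Cref{osc-lemma} easily absorbs. No dyadic decomposition or sharper prime-counting is needed.

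Where you diverge substantively is in isolating the $c''_1$ term. The paper does not differentiate $I(\alpha) = \int_0^1 f_\alpha$; it first rescales $x \mapsto (et/N)x$ so that the ceiling becomes $\lceil 1/ex\rceil$ independent of $N$, and then lower-bounds $\lfloor (N/et)/x \rfloor - \lfloor 1/x \rfloor$ pointwise by $\sum_{k \leq K} 1_{(1/k,\,(N/et)/k]}(x)$. Each such interval has length $\sim ec_0/(k\log N)$, and integrating $\log(ex\lceil 1/ex\rceil)$ over it contributes $\sim (ec_0/\log N)\cdot (1/k)\log\bigl((e/k)\lceil k/e\rceil\bigr)$, recovering $c''_1$ upon summation. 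Your differentiation route is a legitimate alternative, but it contains a slip: $\sum_{j} g(\alpha/j)/j$ is the derivative of $\alpha I(\alpha) = \sum_j G(\alpha/j)$, not of $I(\alpha)$ itself. The product rule gives
\[
I'(\alpha) = \frac{1}{\alpha}\Bigl(\sum_{j\geq 1} \frac{g(\alpha/j)}{j} - I(\alpha)\Bigr), \qquad I'(e) = \frac{c''_1}{e} - c_0,
\]
and the extra $-c_0$ propagates into your final coefficient matching as an additional $-e^2c_0^2$ at order $N/\log^2 N$; your claim that the matching ``is precisely encoded in \eqref{c1-def}'' does not survive once this term is included, so the algebra there needs to be redone carefully.
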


We discuss the numerical evaluation of these constants in \Cref{c0-app}.

  Numerically, this bound is a reasonably good approximation for medium-sized $N$, see \Cref{fig-long}, \Cref{fig-longer}, although it may be possible to improve the approximation further with additional terms.  Based on these numerics it seems natural to conjecture that one in fact has
  $$\frac{t(N)}{N} = \frac{1}{e} - \frac{c_0}{\log N} - \frac{c_1 + o(1)}{\log^2 N}$$
as $N \to \infty$.

    \begin{proof}[Proof of \Cref{upper-bound}]
    We apply \Cref{upper-crit} with
      $$ t \coloneqq \frac{1}{e} - \frac{c_0}{\log N} - \frac{c_1-\eps}{\log^2 N}$$
    for a given small constant $\eps>0$.  From the Taylor expansion of the logarithm and the Stirling approximation \eqref{stirling} one sees that
    $$ \log N! - N \log t = ec_0 \frac{N}{\log N} + (ec_1 - \frac{1}{2} e^2 c_0^2 - e\eps+o(1)) \frac{N}{\log^2 N}$$
    so it will suffice to establish the lower bound
    \begin{equation}\label{targ-sum}
      \sum_{\frac{t}{\lfloor\sqrt{t}\rfloor} < p \leq N} f_{N/t}(p/N) \geq ec_0 \frac{N}{\log N} + (ec_1 - \frac{1}{2} e^2 c_0^2 - e \eps + o(1)) \frac{N}{\log^2 N}
    \end{equation}
    for $N$ sufficiently large depending on $\eps$.

    For $N$ large enough, we have $\frac{t}{\lfloor\sqrt{t}\rfloor} \leq \frac{N}{\log^3 N}$.
    On the interval $[1/\log^3 N,1]$, the piecewise smooth function $f_{N/t}$ is bounded by $O(1)$ thanks to \eqref{falpha-bound}, and has an (augmented) total variation of $O(\log^3\! N)$; the same is true of the rescaled function $x \mapsto f_{N/t}(x/N)$ on $[N/\log^3\! N,N]$.   This implies that $x \mapsto \frac{1}{\log x} f_{N/t}(x/N)$ has an (augmented) total variation of $O(\log^2 N)$.
   By \Cref{osc-lemma} (with classical error term), we conclude that the left-hand side of \eqref{targ-sum} is at least
    $$ \int_{N/\log^3 N}^N f_{N/t}(x/N) \frac{dx}{\log x} + O\left( N \exp(-c\sqrt{\log N}) \right)$$
    for some $c>0$.  Performing a change of variable, it suffices to show that
    $$  \int_{1/\log^3 N}^1 f_{N/t}(x) \frac{\log N}{\log(Nx)}\ dx
    \geq  ec_0 + \frac{ec_1 - \frac{1}{2} e^2 c_0^2 - e \eps + o(1)}{\log N}.$$
    By Taylor expansion, we have
    $$ \frac{\log N}{\log(Nx)} = 1 + \frac{\log \frac{1}{x}}{\log N}  + o\left(\frac{1}{\log N}\right)$$
    and from dominated convergence we have
  $$
  \int_{1/\log^3 N}^1 f_{N/t}(x) \log \frac{1}{x}\ dx = ec'_1 + o(1)$$
and hence by definition of $c_1$, it suffices to show that
$$  \int_{1/\log^3 N}^1 f_{N/t}(x)\ dx
\geq  ec_0 + \frac{ec_0 c''_1 - e^2 c_0^2 - e\eps + o(1)}{\log N}.$$
By performing a rescaling by $N/et = 1 + \frac{ec_0+o(1)}{\log N}$, the left-hand side may be written as
$$ \left(1 - \frac{ec_0+o(1)}{\log N}\right) \int_{N/et\log^3 N}^{N/et}
\left\lfloor \frac{N/et}{x} \right\rfloor \log\left(ex \left\lceil \frac{1}{ex} \right\rceil \right)\ dx$$
so it will suffice to show that
$$
\int_{N/et\log^3 N}^{N/et}
\left\lfloor \frac{N/et}{x} \right\rfloor \log\left(ex \left\lceil \frac{1}{ex} \right\rceil \right)\ dx \geq ec_0 + \frac{ec_0 c''_1 - e\eps + o(1)}{\log N}.$$
From \eqref{c0-def}, \eqref{falpha-bound} we have
$$ \int_{1/\log^2 N}^1 \left\lfloor \frac{1}{x} \right\rfloor \log\left(ex \left\lceil \frac{1}{ex} \right\rceil \right) = ec_0 - \frac{o(1)}{\log N},$$
so it suffices to show that
$$
\int_{1/log^2 N}^{N/et}
\left(\left\lfloor \frac{N/et}{x} \right\rfloor - \left\lfloor \frac{1}{x} \right\rfloor\right) \log\left(ex \left\lceil \frac{1}{ex} \right\rceil \right)\ dx \geq \frac{ec_0 c'' - e\eps + o(1)}{\log N}.$$
Let $K$ be sufficiently large depending on $\eps$, then for $N$ sufficiently large depending on $K$ we can lower bound the left-hand side by
$$ \sum_{k=1}^K \int_{1/k}^{N/etk} \log\left(ex \left\lceil \frac{1}{ex} \right\rceil \right)\ dx;$$
since $\frac{N}{etk} = \frac{1}{k} + \frac{ec_0}{k \log N}$, we can lower bound this (using the irrationality of $e$) by
$$ \frac{ec_0+o(1)}{\log N} \sum_{k=1}^K \frac{1}{k} \log\left(\frac{e}{k} \left\lceil \frac{k}{e} \right\rceil\right)$$
for sufficiently large $N$.  Since the sum here can be made arbitrarily close to $c''_0$ by increasing $K$, we obtain the claim.
\end{proof}

We can now establish \Cref{main}(i):

\begin{proposition}\label{tne} One has $t(N)/N < 1/e$ for $N \neq 1,2,4$.
\end{proposition}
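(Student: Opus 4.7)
The proof plan is to split $N$ into a ``large'' regime handled by asymptotic analysis and a ``small'' regime handled by exhaustive computation.

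For $N$ sufficiently large, \Cref{upper-bound} already delivers $t(N)/N \leq 1/e - c_0/\log N - (c_1 + o(1))/\log^2 N < 1/e$, since $c_0 > 0$ by its definition \eqref{c0-def}. A more direct and quantitatively convenient approach, however, is to apply \Cref{upper-crit} with the choice $t \coloneqq \lfloor N/e \rfloor + 1$. This $t$ automatically satisfies $t > N/e$, so whenever the hypothesis
$$\sum_{t/\lfloor\sqrt{t}\rfloor < p \leq N} f_{N/t}(p/N) > \log N! - N \log t$$
of \Cref{upper-crit} is verified, we deduce $t(N) < t$, i.e., $t(N) \leq \lfloor N/e \rfloor < N/e$. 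With this choice of $t$ and Stirling's formula \eqref{stirling}, the right-hand side collapses to size $O(\log N)$, while the arguments already carried out in the proof of \Cref{upper-bound} show that the left-hand side is of order $N/\log N$; the comparison is comfortable once $N$ is moderately large.

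For the remaining finite range of small $N$, one verifies the inequality $t(N) < N/e$ by direct computation: the integer and linear programming procedures of \Cref{linprog-sec} compute $t(N)$ exactly for all $N \leq 10^4$, comfortably covering any explicit threshold above which \Cref{upper-crit} can be made to apply. The exceptional cases are immediately visible from the initial segment of the sequence $t(N) = 1,1,1,2,\dots$: one has $t(1)=t(2)=1$ and $t(4)=2$, each strictly exceeding $N/e$. According to \Cref{cases-table}, the two regimes can be taken to overlap on the range $80 < N \leq 10^4$, which doubles as a cross-validation between the two methods.

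The main obstacle is making the estimates in the proof of \Cref{upper-bound} effective enough to yield a concrete and small threshold: the $o(\cdot)$ notation used there must be replaced by explicit constants. This requires tracking the error terms produced by the effective form of \Cref{osc-lemma} through the change of variables and Taylor expansions in the proof of \Cref{upper-bound}, and using the elementary majorisation \eqref{falpha-bound} to dominate the tail contributions of $f_{N/t}$. Once this effective threshold (e.g.\ $N>80$) is pinned down, the proof reduces to a finite verification.
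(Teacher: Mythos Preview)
Your plan is essentially the same as the paper's: split into a small-$N$ computational regime and a large-$N$ regime handled by \Cref{upper-crit}, with the key observation that Stirling collapses the right-hand side to $O(\log N)$ while the prime sum is of order $N/\log N$. One simplification the paper makes that you may find convenient: rather than setting $t=\lfloor N/e\rfloor+1$ and working with $f_{N/t}$, it takes $t=N/e$ exactly (no integrality is needed in \Cref{upper-crit}), so the relevant function is $f_e$ itself; restricting further to primes in $(N/e,N]$ gives a piecewise-monotone function on $[1/e,1]$ with augmented total variation exactly $4-2\log 2$, which makes the application of \Cref{osc-lemma} completely explicit and pushes the analytic threshold down to $N>5000$, with a short numerical check of the inequality for $80\le N\le 5000$ bridging to the linear-programming data for $N<80$.
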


\begin{proof}  From existing data on $t(N)$ (or the linear programming method) one can verify this claim for $N < 80$ (see \Cref{fig1}), so we assume that $N\geq 80$.

Applying \Cref{upper-crit} and \eqref{stirling}, it suffices to show that
\begin{equation}\label{base-test-ineq}
   \sum_{p \geq \frac{N/e}{\lfloor\sqrt{N/e}\rfloor}} f_{e}(p/N) > \frac{1}{2} \log(2\pi N) + \frac{1}{12N}.
\end{equation}
This may be easily verified numerically in the range $80 \leq N \leq 5000$ (see \Cref{fig2}).
We will discard the $\lfloor\sqrt{N/e}\rfloor$ denominator, so it suffices to show
\begin{equation}\label{test-ineq}
  \sum_{N/e < p \leq N} f_{e}(p/N) > \frac{1}{2} \log(2\pi N) + \frac{1}{12N}
\end{equation}
for $N > 5000$.  On $[1/e,1]$, one can compute
$$ \|f_e\|_{\mathrm{TV}^*(1/e,1]}
= 4 - 2 \log 2$$
so by \Cref{osc-lemma} (noting that $5000/e > 1423$) we have
$$ \sum_{N/e < p \leq N} f_{e}(p/N)
\geq \frac{N \left(1-\frac{2}{\sqrt{N/e}}\right)}{\log N} \int_{1/e}^1 f_e(x)\ dx - (4 - 2 \log 2) \frac{E(N)}{\log N}
$$
and so it suffices to show that
$$ \left(1 - \frac{2}{\sqrt{N/e}}\right) \int_{1/e}^1 f_e(x)\ dx \geq
(4 - 2 \log 2) \frac{E(N)}{N}
+ \frac{\log(2\pi N) \log N}{2N} + \frac{\log N}{12N^2}.$$
The right-hand side is increasing in $N$ and the left-hand side is decreasing for $N \geq 5000$, so it suffices to verify this claim for $N=5000$; but this is a routine calculation (with plenty of room to spare; see \Cref{fig2}).
\end{proof}

\begin{figure}
  \centering
  \includegraphics[width=0.8\textwidth]{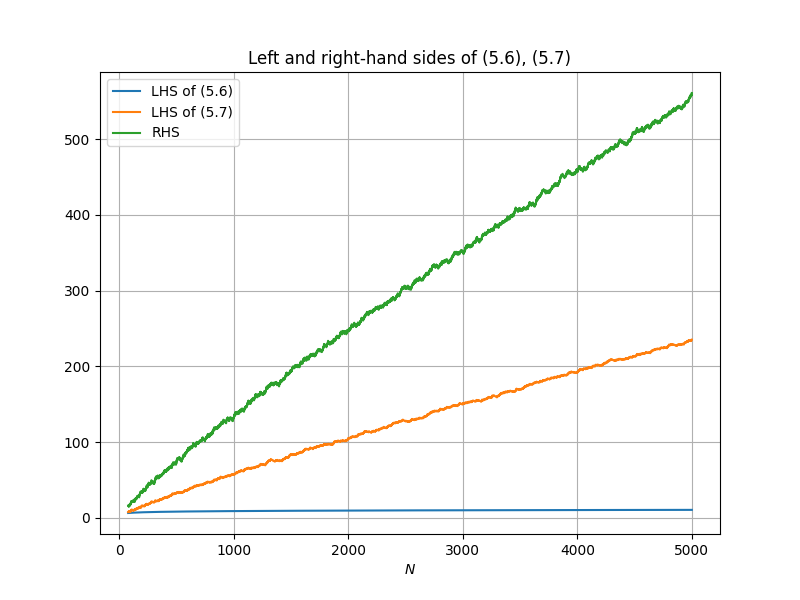}
  \vspace{-8pt}
  \caption{A plot of the left and right sides of \eqref{base-test-ineq}, \eqref{test-ineq} for $80 \leq N < 5000$.}\label{fig2}
\end{figure}

\section{Rearranging the standard factorization}\label{rearrange-sec}

In this section we describe an approach to establishing lower bounds on $t(N)$ by starting with the standard factorization $\{1,\dots,N\}$, dividing out some small prime factors from some of the terms, and then redistributing them to other terms.  This approach was introduced in \cite{guy-selfridge} to give lower bounds of the shape $\frac{t(N)}{N} \geq \frac{3}{16} + o(1)$ (by redistributing powers of two only); \cite{guy-selfridge} also claimed one could show $\frac{t(N)}{N} \geq \frac{1}{4} + o(1)$ by redistributing powers of two and three, but we show in \Cref{onefourth} that this does not work.  With computer assistance, we are also able to show that $\frac{t(N)}{N} \geq \frac{1}{3}+o(1)$ for sufficiently large $N$, in a simpler fashion than the method used to prove \Cref{main}(iv) in the next section.  Finally, we give an alternate proof that $\frac{t(N)}{N} = \frac{1}{e}+o(1)$.

We need some notation.  Define a \emph{downset} to be a finite set ${\mathcal D}$ of natural numbers, obeying the following axioms:
\begin{itemize}
  \item $1 \in {\mathcal D}$.
  \item If $d \in {\mathcal D}$ then all factors of $d$ lie in ${\mathcal D}$.
  \item If $pd \in {\mathcal D}$ for some prime $p$, then $p'd \in {\mathcal D}$ for all primes $p'<p$.
\end{itemize}
For instance, $\{1,2,3,4,6\}$ is a downset.

If $d$ is an element of a downset ${\mathcal D}$, we define $A_{d,\mathcal{D}}$ to be the set of natural numbers not divisible by any prime $p$ with $p < P_+(d)$ or $pd \in \mathcal{D}$.  For instance, if ${\mathcal D} = \{1,2,3,4\}$, then
\begin{itemize}
\item $A_{1,{\mathcal D}}$ is the set of $3$-rough numbers (numbers coprime to $6$);
\item $A_{2,{\mathcal D}} = A_{3,{\mathcal D}}$ is the set of odd numbers; and
\item $A_{4,{\mathcal D}}$ is the set of all natural numbers.
\end{itemize}

The fundamental theorem of arithmetic asserts that every natural number can be uniquely factored as $n = p_1 \dots p_r$ for some primes $p_1 \leq \dots \leq p_r$.  If we let $d \coloneqq p_1 \dots p_k$ be the largest initial segment of this factorization that lies in ${\mathcal D}$, then we have $n=dm$ with $m \in A_{d,{\mathcal D}}$.  Conversely, if $d \in {\mathcal D}$ and $m \in A_{d,{\mathcal D}}$, then applying this procedure to $n=dm$ will recover precisely these factors.  We thus have a partition
\begin{equation}\label{partition}
 \N = \biguplus_{d \in {\mathcal D}} d \cdot A_{d,{\mathcal D}}
\end{equation}
of the natural numbers into various multiples of $d$, for various $d \in {\mathcal D}$.  For instance, in the example ${\mathcal D} = \{1,2,3,4\}$ appearing above, then \eqref{partition} partitions the natural numbers into four classes: the $3$-rough numbers, the odd numbers multiplied by two, the odd numbers multiplied by three, and the multiples of four.

We can use this partition to rearrange the standard factorization $\{1,\dots,N\}$ of $N!$ into a new factorization by extracting out the factors of $d$ arising from \eqref{partition} and reallocating them efficiently to the smaller elements of the resulting subfactorization to make it $t$-admissible.  Specifically, we have

\begin{proposition}[Criterion for lower bound]\label{rearrange-crit}  Let ${\mathcal D}$ be a downset, let $0 < \alpha < 1$, let $N \geq 1$, and suppose we have non-negative reals $a_\ell$ for all natural numbers $\ell$ obeying the following conditions:
  \begin{itemize}
  \item[(i)]  For all primes $p$, one has
  \begin{equation}\label{i-eq-1}
     \sum_{\ell=1}^\infty \nu_p(\ell) a_\ell  \leq \sum_{d \in {\mathcal D}} \frac{\nu_p(d)}{d} \frac{\# (A_{d,{\mathcal D}} \cap [1,N/d])}{N/d}.
  \end{equation}
  \item[(ii)]  For every natural number $\ell$, we have
  \begin{equation}\label{l-eq-1} \sum_{\ell' > \ell} a_{\ell'} \geq \sum_{d \in {\mathcal D}} \frac{\# (A_{d,{\mathcal D}} \cap [1,\min(N/d, \alpha N/\ell)])}{N}.
  \end{equation}
  \end{itemize}
  Suppose further that the $a_\ell N$ are integers for all $\ell$.   Then $t(N) \geq \alpha N$.
  \end{proposition}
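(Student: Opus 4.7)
The plan is to build an $\alpha N$-admissible subfactorization of $N!$ of cardinality $N$ (which suffices by \Cref{subfac}) by starting from the standard factorization $\{1,\dots,N\}$, peeling off a factor $d\in{\mathcal D}$ from each $n$ using the partition \eqref{partition}, and then redistributing the resulting pool of peeled primes as ``multipliers'' onto those factors that end up too small. Applying \eqref{partition} writes each $n \in \{1,\dots,N\}$ uniquely as $n = dm$ with $d \in {\mathcal D}$ and $m \in A_{d,{\mathcal D}} \cap [1, N/d]$, yielding $N$ pairs $(d,m)$ and the identity
\[
N! = \prod_{d \in {\mathcal D}} d^{c_d} \cdot \prod_{(d,m)} m, \qquad c_d \coloneqq \#(A_{d,{\mathcal D}} \cap [1, N/d]).
\]
Set $P \coloneqq \prod_{d \in {\mathcal D}} d^{c_d}$. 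We will draw from this pool a multiset of multipliers consisting of $a_\ell N$ copies of each natural number $\ell$ (the integrality hypothesis on $a_\ell N$ ensures this is an honest multiset). Condition (i), after multiplying through by $N$, reads $\sum_\ell a_\ell N\,\nu_p(\ell) \leq \sum_d c_d\,\nu_p(d)$ for every prime $p$, which is precisely the statement that $\prod_\ell \ell^{a_\ell N}$ divides $P$; so the pool does supply the multipliers.

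Next, set up the bipartite graph $G$ whose left vertices are the $N$ pairs $(d,m)$, whose right vertices are the copies of multipliers (with multiplicity $a_\ell N$ for each $\ell$), and with an edge $(d,m) \sim \ell$ iff $m\ell \geq \alpha N$. The goal is a matching saturating every pair with $m < \alpha N$; pairs with $m \geq \alpha N$ may be left unmatched, since the bare factor $m$ is already $\geq \alpha N$. The neighbourhood of $(d,m)$ is the up-set $\{\ell \geq \lceil \alpha N/m \rceil\}$, so by this interval structure the Hall marriage condition reduces to: for every natural number $L$,
\[
\sum_{\ell > L} a_\ell N \;\geq\; \#\bigl\{(d, m) : m < \alpha N/L,\ m \leq N/d\bigr\}.
\]
Condition (ii) at $\ell = L$ is the same inequality except that it counts pairs with $m \leq \alpha N/L$ rather than $m < \alpha N/L$; the difference is only the single value $m = \alpha N/L$ when this is an integer, so (ii) in fact dominates the Hall inequality and the required matching exists.

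Finally, replace each matched pair by the factor $m \cdot \ell_{d,m} \geq \alpha N$ and keep each unmatched pair as the bare factor $m$. This produces a multiset of exactly $N$ factors, each of size at least $\alpha N$, whose product is
\[
\prod_{(d,m)} m \,\cdot\, \prod_{\text{matched}} \ell_{d,m} \;=\; \frac{N!}{P} \cdot \prod_{\text{matched}} \ell_{d,m}.
\]
The second factor on the right is a sub-multiset product of $\prod_\ell \ell^{a_\ell N}$, which divides $P$, so the whole product divides $N!$. Hence we have an $\alpha N$-admissible subfactorization of $N!$ of cardinality $N$, and \Cref{subfac} gives $t(N) \geq \alpha N$. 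The only substantive step is the Hall matching; the main bookkeeping point is the strict-versus-non-strict discrepancy noted above, and I do not anticipate any real obstacle.
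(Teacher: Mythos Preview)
Your proof is correct and follows essentially the same strategy as the paper: decompose $\{1,\dots,N\}$ via the partition \eqref{partition}, interpret condition (i) as guaranteeing that the multiplier multiset $\{\ell^{a_\ell N}\}$ is available from the pool $P$, and interpret condition (ii) as guaranteeing that small $m$'s can be paired with large enough multipliers. The only cosmetic difference is that the paper carries out the pairing by an explicit greedy assignment (largest $\ell$ to the smallest unselected $m$'s, then decrement $\ell$), whereas you invoke Hall's marriage theorem; since the neighbourhoods are nested up-sets, the greedy assignment is precisely the standard proof of Hall in that special case, so the two arguments are interchangeable.
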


  Informally, ${\mathcal D}$ represents the factors that one removes (as greedily as possible) from the standard factorization $\{1,\dots,N\}$ of $N!$ to free up some prime factors, and $a_\ell$ is the proportion of elements in the resulting subfactorization that are to be multiplied by $\ell$ (again, in a greedy fashion) to (hopefully) bring the subfactorization into $t$-admissibility.  The condition \eqref{i-eq-1} asserts that enough primes are freed up by the first step to ``afford'' the second step, while \eqref{l-eq-1} is the assertion that the $a_\ell$ have enough "mass" at large $\ell$ to make even the smallest elements of the subfactorization $t$-admissible.

  \begin{proof}[Proof of \Cref{rearrange-crit}]  Restricting \eqref{partition} to $[1,N]$ and multiplying, we obtain a factorization
  $$ N! = \prod_{d \in {\mathcal D}} d^{|A_{d,{\mathcal D}} \cap [1,N/d]|} \times \prod \tuple$$
  where $\tuple$ is the multiset
  $$ \tuple \coloneqq \biguplus_{d \in {\mathcal D}} \left(A_{d,{\mathcal D}} \cap \left[1,\frac{N}{d}\right]\right).$$
  Thus $\tuple$ has cardinality $|\tuple| = N$, and is a subfactorization of $N!$ with surplus
  \begin{equation}\label{p-surplus-1}
   \nu_p\left( \frac{N!}{\prod \tuple} \right) = \sum_{d \in {\mathcal D}} \nu_p(d) \left|A_{d,{\mathcal D}} \cap \left[1,\frac{N}{d}\right]\right| \geq      \sum_{\ell=1}^\infty \nu_p(\ell) a_\ell N
  \end{equation}
  thanks to \eqref{i-eq-1}.  In particular this multiset is in balance for all primes $p \not \in {\mathcal D}$.

  The multiset $\tuple$ will contain elements that are smaller than $\alpha N$; but we can compute the number of such elements  precisely.  Indeed, for any natural number $\ell$, the number of elements of $\tuple$ that are less than $\alpha N/\ell$ is
  \begin{equation}\label{N-sum-1}
    \sum_{d \in {\mathcal D}} \left|A_{d,{\mathcal D}} \cap \left[1,\frac{N}{d}\right] \cap \left[1,\frac{\alpha N}{\ell}\right)\right|
    \leq  \sum_{\ell' > \ell} a_{\ell'} N
  \end{equation}
  by \eqref{l-eq-1}.

  We now form a modification $\tuple'$ of the multiset $\tuple$ by multiplying each element of $\tuple$ by an appropriate natural number to make it at least $\alpha N$.  More precisely, we perform the following algorithm.
  \begin{itemize}
    \item Initialize $\tuple'$ to be empty, and initialize $\ell$ to be the largest natural number for which $a_\ell > 0$.  (From \eqref{i-eq-1} and the hypothesis that the $a_\ell N$ are integers, it is easy to see that $\ell$ exists.)
    \item For the $a_\ell N$ smallest elements $m$ of $\tuple$ not already selected, add $\ell m$ to $\tuple'$ (counting multiplicity).
    \item Decrement $\ell$ by one, and repeat the previous step until $\ell=1$, or until all elements of $\tuple$ have been selected.
    \item For any remaining elements $m$ of $\tuple$ that have not been involved in any previous step, add $m$ to $\tuple'$.
  \end{itemize}
  It is clear that $\tuple'$ has the same cardinality $N$ as $\tuple$.  If $p \not \in {\mathcal D}$, the hypothesis \eqref{i-eq-1} forces $a_\ell=0$ for all $\ell$ that are divisible by $p$; because of this, $\tuple'$ remains in balance at those primes.  For the primes $p$ in ${\mathcal D}$, we see from construction that the $p$-surplus of $\tuple'$ has decreased from $\tuple$ by at most
  $\sum_\ell \nu_p(\ell) a_\ell N$, and so from \eqref{p-surplus-1}, $\tuple'$ is either in $p$-surplus or in $p$-balance, and is thus a subfactorization of $N!$.

It remains to verify that $\tuple'$ is $\alpha N$-admissible.  From an inspection of the algorithm, we see that the only way this can fail to be the case is if, for some $\ell$, the number of elements of $\tuple$ in $[1, \alpha N/\ell)$ exceeds the quantity $\sum_{\ell' > \ell} a_{\ell'} N$.  But this is ruled out by \eqref{N-sum-1}.
\end{proof}

One advantage of this criterion is that it is amenable to taking asymptotic limits as $N \to \infty$, holding the downset ${\mathcal D}$ and the ratio $\alpha$ fixed.  From the Chinese remainder theorem we observe that the sets $A_{d,{\mathcal D}}$ have density
$$ \sigma_{d,{\mathcal D}} \coloneqq \prod_{p < P_+(d) \text{ or } pd \in {\mathcal D}} \left(1-\frac{1}{p}\right),$$
in the sense that
\begin{equation}\label{crt}
 |A_{d,{\mathcal D}} \cap I| = \sigma_{d,{\mathcal D}} |I| + O(1)
\end{equation}
for any interval $I$ (where the implied constant is permitted to depend on ${\mathcal D}$).  From \eqref{partition} we then have the identity
\begin{equation}\label{sigma-id}
  \sum_{d \in {\mathcal D}} \frac{\sigma_{d,{\mathcal D}}}{d} = 1.
\end{equation}

\begin{example}  The set ${\mathcal D} = \{1,2,4\}$ is a downset, with $\sigma_{1,{\mathcal D}} = \sigma_{2,{\mathcal D}} = \frac{1}{2}$ and $\sigma_{4,\mathcal D} = 1$.  The set ${\mathcal D}' = \{1,2,3,4\}$ is also a downset with $\sigma_{1,{\mathcal D}} = \frac{1}{3}$, $\sigma_{2,{\mathcal D}} = \frac{1}{2}$, $\sigma_{3,{\mathcal D}} = \frac{1}{2}$, $\sigma_{4,{\mathcal D}} = 1$.  The identity \eqref{sigma-id} becomes
$$ \frac{1/2}{1} + \frac{1/2}{2} + \frac{1}{4} = 1$$
for the former downset and
$$ \frac{1/3}{1} + \frac{1/2}{2} + \frac{1/2}{3} + \frac{1}{4} = 1$$
for the latter downset.
\end{example}

As an aside, we make the constant in \eqref{crt} explicit in certain situations that will arise later in \Cref{two-sevenths}:
\begin{lemma} \label{lit7}
  Suppose $p\le 7$ is prime.  Let $A_{p}$ denote the $p$-rough numbers, that is, integers not divisible by any prime $p'\le p$.  Then
  $$ \#(A_{p}\cap[1,x]) = x\prod_{p'\le p} \left(1-\frac{1}{p'}\right) + O_{\le}(53/35). $$
\end{lemma}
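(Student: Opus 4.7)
The plan is to verify the bound pointwise for each prime $p \in \{2,3,5,7\}$ by reducing to a finite computation. Set $\sigma_p \coloneqq \prod_{p'\leq p}(1-1/p')$ and $f_p(x) \coloneqq \#(A_p \cap [1,x]) - x\sigma_p$; the goal is to show $|f_p(x)| \leq 53/35$ for all $x \geq 0$.

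First I would observe that $f_p$ is periodic with period $P_p \coloneqq \prod_{p'\leq p} p'$, since by the Chinese remainder theorem each interval of length $P_p$ contains exactly $\prod_{p'\leq p}(p'-1) = P_p \sigma_p$ integers coprime to all primes $\leq p$. Between consecutive $p$-rough integers $r < r'$, the count $\#(A_p\cap[1,x])$ is constant while $x\sigma_p$ grows linearly, so $f_p$ is affine with negative slope on $[r, r')$; its extrema are therefore attained at $x = r$ or at the left limit $x \to r'^{-}$. Hence it suffices to enumerate the $p$-rough integers $r$ in $[1, P_p]$ and evaluate $f_p$ at these discrete points and their left limits.

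For $p \in \{2,3,5\}$ this is immediate: $p = 2$ yields $\sup|f_2| = 1/2$; $p = 3$ is exactly \Cref{lit} with sup $2/3$; and the $\phi(30) = 8$ five-rough residues modulo $30$ give worst case $|f_5(7^-)| = |1 - 28/15| = 13/15$. The main case, in which the bound $53/35$ is sharp, is $p = 7$: the longest gap between consecutive $7$-rough integers in $[1, 210]$ has length $10$ (the Jacobsthal value $g(210)$), occurring between $1$ and $11$ and again, by the symmetry $r \leftrightarrow 210-r$ of the residues coprime to $210$, between $199$ and $209$. At $x = 11^-$ one computes $f_7 = 1 - 88/35 = -53/35$, and at $x = 199$ one computes $f_7 = 47 - 1592/35 = 53/35$, so the bound is attained on both sides.

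The main obstacle is then to verify that the remaining $46$ gaps within one period (each of length at most $8$) produce strictly smaller $|f_7|$. The two length-$8$ gaps ($89 \to 97$ and $113 \to 121$) give $|f_7|$ at most $41/35 < 53/35$, and shorter gaps contribute even less since the extremal values at their endpoints are bounded by $k - r\sigma_7$ with $r$ and $k$ both controlled by the density. This is routine bookkeeping over the $48$ seven-rough integers in $[1,210]$, organizable as a single table, after which the claim follows.
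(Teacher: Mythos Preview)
Your proposal is correct and follows essentially the same approach as the paper: both exploit the periodicity of $f_p(x) = \#(A_p\cap[1,x]) - x\sigma_p$ with period $\prod_{p'\le p} p'$ to reduce to a finite check, and both identify the extremal values as occurring for $p=7$ at $x \to 11^-$ and $x = 199 = 210-11$. The paper's proof is a two-sentence sketch (``we can check that the maximum \dots is for $p=7$ at $x=210-11$ or as $x\to 11^-$''), whereas you supply the supporting detail about the piecewise-affine structure, the length-$10$ Jacobsthal gap, and the explicit values $\pm 53/35$; but the underlying argument is the same.
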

\begin{proof}
  As in the proof of \Cref{lit}, we consider the functions $$\#(A_{p}\cap[1,x]) - x\prod_{p'\le p} \left(1-\frac{1}{p'}\right)$$
  for each prime $p\le 7$.
  These functions are periodic with period $\prod_{p'\le p} p'$ respectively (which is at most $2\cdot 3\cdot 5\cdot 7=210$), and we can check that the maximum (absolute) value attained across all of them is for $p=7$ at $x=210-11$ or as $x\to 11^{-}$.
\end{proof}

We now have an asymptotic version of \Cref{rearrange-crit}:

\begin{proposition}[Criterion for asymptotic lower bound]\label{asym-crit}  Let ${\mathcal D}$ be a downset that contains at least one prime $p_0$, let $0  < \alpha < 1$, and suppose we have non-negative reals $a_\ell$ for all natural numbers $\ell$ obeying the following conditions:
\begin{itemize}
\item[(i)]  For all primes $p$, one has
\begin{equation}\label{i-eq}
   \sum_{\ell=1}^\infty \nu_p(\ell) a_\ell  \leq \sum_{d \in {\mathcal D}} \sigma_{d,{\mathcal D}} \frac{\nu_p(d)}{d}.
\end{equation}
\item[(ii)]  For every natural number $\ell$, we have
\begin{equation}\label{l-eq} \sum_{\ell' > \ell} a_{\ell'} > \sum_{d \in {\mathcal D}} \sigma_{d,{\mathcal D}} \min\left( \frac{1}{d}, \frac{\alpha}{\ell}\right).
\end{equation}
\end{itemize}
Then $t(N) \geq \alpha N$ for all sufficiently large $N$.
\end{proposition}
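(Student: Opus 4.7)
The plan is to deduce this asymptotic criterion from the finite Proposition~\ref{rearrange-crit} by an approximation argument: for each sufficiently large $N$, I would construct a non-negative rational sequence $(a_\ell^{(N)})$ with each $a_\ell^{(N)} N$ a non-negative integer, verify the finite hypotheses \eqref{i-eq-1}, \eqref{l-eq-1}, and apply Proposition~\ref{rearrange-crit} to conclude $t(N) \geq \alpha N$.

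The bridge between the asymptotic and finite inequalities is the density estimate \eqref{crt}: it gives $|A_{d,\mathcal{D}} \cap [1, X]| = \sigma_{d,\mathcal{D}} X + O_{\mathcal{D}}(1)$, so the right-hand sides of \eqref{i-eq-1} and \eqref{l-eq-1} agree with those of \eqref{i-eq} and \eqref{l-eq} up to uniform errors of size $O_{\mathcal{D}}(1/N)$; the finite conditions therefore follow from the asymptotic ones once there is enough slack to cover these errors and the rounding errors from discretization. To create such slack I would first rescale $a_\ell \mapsto (1-\eps_N) a_\ell$ for a slowly decaying $\eps_N \to 0$ (say $\eps_N = 1/\log N$). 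The key observation is that by \eqref{i-eq} applied to primes $p$ not dividing any $d \in \mathcal{D}$ (whose RHS vanishes), the sequence $(a_\ell)$ is supported on integers whose prime factors all lie in the finite set $\Pi \coloneqq \{p : p \mid d \text{ for some } d \in \mathcal{D}\}$. Thus \eqref{i-eq} is nontrivial only for the finitely many primes in $\Pi$, and the rescaling opens a uniform $\Omega(\eps_N)$ gap there, while the strict inequality in \eqref{l-eq} supplies its own positive gap, preserved for small $\eps_N$.

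The main obstacle is that $(a_\ell)$ need not have finite support: since the RHS of \eqref{l-eq} is strictly positive for every $\ell$, the sequence must have an infinite tail. I would address this by truncating at a slowly growing cutoff $L = L(N) \to \infty$, setting $a_\ell^{(N)} \coloneqq \lfloor (1-\eps_N) a_\ell N \rfloor/N$ for $\ell \leq L$, and relocating the discarded tail mass onto a single large multiplier $\ell^* \coloneqq p_0^{k}$, where $p_0 \in \Pi$ is any fixed prime (which exists by the hypothesis that $\mathcal{D}$ contains at least one prime) and $k = k(N) \to \infty$ with $\ell^* > L$. Because $\ell^* > L$, this extra mass contributes to $\sum_{\ell' > \ell} a_{\ell'}^{(N)}$ for every $\ell \leq L$, restoring the tail needed for \eqref{l-eq-1}; and because $\ell^*$ is a power of a single prime $p_0$, it perturbs \eqref{i-eq-1} only at $p = p_0$, by a controllable amount of order $k/L$. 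A balancing of the parameters $\eps_N, L, k$ (e.g., $\eps_N = 1/\log N$, $L = (\log N)^2$, $k = \log L$) keeps every error (density, rounding, tail adjustment) strictly within the available slack, after which Proposition~\ref{rearrange-crit} yields the desired bound $t(N) \geq \alpha N$ for all large $N$.
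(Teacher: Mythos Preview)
Your overall strategy is correct and matches the paper's: reduce to Proposition~\ref{rearrange-crit} via the density estimate \eqref{crt}, after discretizing and creating slack. The observation that the $a_\ell$ are supported on $\Pi$-smooth numbers is also right and is used implicitly in the paper.

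The genuine gap is in your tail handling. Condition \eqref{l-eq-1} must hold for \emph{every} $\ell$, and its right-hand side stays positive (of order $C/\ell$ plus the $O_{\mathcal D}(1/N)$ density error) for all $\ell \leq \alpha N$. With your construction the left-hand side vanishes for every $\ell \geq \ell^*$, so you need $\ell^* > \alpha N$; but your choice $k = \log L$ gives $\ell^* = p_0^{\log L}$, which is only polylogarithmic in $N$, leaving the entire range $\ell^* < \ell \leq \alpha N$ uncovered. Pushing $\ell^*$ past $\alpha N$ forces $k \asymp \log N$, and then relocating the \emph{full} discarded tail (which can be as large as $O(1/\log L)$, since \eqref{i-eq} only gives $\sum_\ell a_\ell \log \ell < \infty$) onto $\ell^*$ contributes $k \cdot O(1/\log L) \asymp \log N / \log\log N$ to the $p_0$-constraint in \eqref{i-eq-1}, destroying it. A single spike cannot simultaneously satisfy both requirements.

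The paper's remedy is to replace the spike by a \emph{distributed} tail: it adds mass $1/\sqrt{\ell}$ at every $\Pi$-smooth $\ell > \ell_0$ (for a large fixed $\ell_0$), setting $a'_\ell \coloneqq \lfloor (a_\ell + 1_{\ell \in S}/\sqrt{\ell}) N \rfloor / N$. This extra tail contributes $\sum_{\ell \in S, \ell > \ell_0} \nu_p(\ell)/\sqrt{\ell}$ to \eqref{i-eq-1}, which is finite and tends to $0$ as $\ell_0 \to \infty$, fitting inside the slack; and for $\ell > \ell_0$ it guarantees $\sum_{\ell' > \ell} a'_{\ell'} \gg 1/\sqrt{\ell}$ (from the powers of $p_0$ alone), which dominates the $O(1/\ell)$ right-hand side of \eqref{l-eq-1} once $\ell_0$ is large. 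Your proposal can be repaired along these lines.
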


\begin{proof}  We first make some small technical modifications to the sequence $a_\ell$. If $p \in {\mathcal D}$, then the right-hand side of \eqref{i-eq} is positive.  If equality holds here, then one of the $a_\ell$ with $\ell$ divisible by $p$ is positive; but one can reduce this quantity slightly without violating \eqref{l-eq}, since this $a_\ell$ only impacts finitely many cases of these strict inequalities.  Thus, we may assume without loss of generality that the inequality \eqref{i-eq} is strict for all $p \in {\mathcal D}$.

From \eqref{i-eq} and \eqref{ftoa} we see that
$$ \sum_{\ell=1}^\infty a_\ell \log \ell \leq \sum_{d \in {\mathcal D}} \frac{\sigma_{d,{\mathcal D}}}{d} \log d.$$
In particular, we have the decay bound
\begin{equation}\label{tail}
  \sum_{\ell \geq \sqrt{N}} a_\ell \ll \frac{1}{\log N}
\end{equation}
(we allow implied constants to depend on ${\mathcal D}$).

Now let $\ell_0$ be a sufficiently large natural number, and assume that $N$ is sufficiently large depending on ${\mathcal D}$ and $\ell_0$.  We introduce the modified sequence
$$ a'_\ell \coloneqq \left\lfloor \left(a_\ell + \frac{1_{\ell \in S}}{\sqrt{\ell}}\right) N \right\rfloor / N,$$
where $S$ is the set of all natural numbers $\ell > \ell_0$ that are only divisible by primes in ${\mathcal D}$.
Clearly the $N a'_\ell$ are integers, so by \Cref{rearrange-crit} it will suffice to verify the hypotheses \eqref{i-eq-1}, \eqref{l-eq-1}.

We begin with \eqref{i-eq-1}.  By construction, both sides vanish for $p \not \in {\mathcal D}$, so we only need to verify the finite number of cases when $p \in {\mathcal D}$.  By \eqref{crt}, the right-hand side of \eqref{i-eq-1} is
$$ \sum_{d \in {\mathcal D}} \sigma_{d,{\mathcal D}} \frac{\nu_p(d)}{d} + O\left(\frac{1}{N} \right).$$
The left-hand side is at most
$$ \sum_{\ell=1}^\infty \nu_p(\ell) a_\ell + \sum_{\ell \in S} \frac{\nu_p(\ell)}{\sqrt{\ell}}.$$
By the dominated convergence theorem and Euler products, the second term goes to zero as $\ell_0 \to \infty$.  Since we are assuming that \eqref{i-eq} holds with strict inequality, we thus conclude \eqref{i-eq-1} for all sufficiently large $N$.

Now we turn to \eqref{l-eq-1}.  For the finite number of cases $\ell \leq \ell_0$, we use \eqref{crt} again to write the right-hand side as
$$ \sum_{d \in {\mathcal D}} \sigma_{d,{\mathcal D}} \min\left( \frac{1}{d}, \frac{\alpha}{\ell}\right) + O\left(\frac{1}{N} \right).$$
Using the lower bound $a'_\ell \geq a_\ell - O(1/N)$ for $\ell \leq \sqrt{N}$, as well as \eqref{tail}, the left-hand side is at least
$$ \sum_{\ell' > \ell} a_{\ell'} - O\left(\frac{1}{\sqrt{N}} \right) - O\left(\frac{1}{\log N} \right).$$
Since we have strict inequality in \eqref{l-eq}, we obtain \eqref{l-eq-1} for all sufficiently large $N$ under the regime $\ell \leq \ell_0$.

In the case $\ell>N$ the claim \eqref{l-eq-1} is trivial because the right-hand side vanishes, so it remains to consider the case $\ell_0 < \ell \leq N$. Here the right-hand side can be crudely bounded by $O(1/\ell)$.  On the other hand, by using the powers of $p_0$ in $S$ one can lower bound the left-hand side by $\gg 1/\sqrt{\ell}$.  For $\ell_0$ large enough, the claim follows.
\end{proof}

\begin{example}  Let $0 < \alpha < 3/16$ and ${\mathcal D} = \{1,2,4\}$.  If we set $a_{2^r} = \frac{3}{2^{r+3}}$ for $r \geq 1$, and $a_\ell=0$ for all other $\ell$, then one can calculate that
$$ \sum_{\ell=1}^\infty a_\ell \nu_2(\ell) = \frac{3}{4} = \sum_{d \in {\mathcal D}} \nu_2(d) \frac{\sigma(d)}{d}$$
and
$$ \sum_{\ell > 2^r} a_{2^r} = \frac{3}{2^{r+3}} > \frac{2\alpha}{2^r} = \sum_{d \in {\mathcal D}} \sigma_{d,{\mathcal D}} \min\left( \frac{1}{d}, \frac{\alpha}{2^r} \right)$$
for any $r \geq 1$.  From this one can readily check that the hypotheses of \Cref{asym-crit} are satisfied, and so we recover the bound $\frac{t(N)}{N} \geq \frac{3}{16}-o(1)$ from \cite{guy-selfridge}.
\end{example}

\begin{figure}
  \centering
  \includegraphics[width=0.8\textwidth]{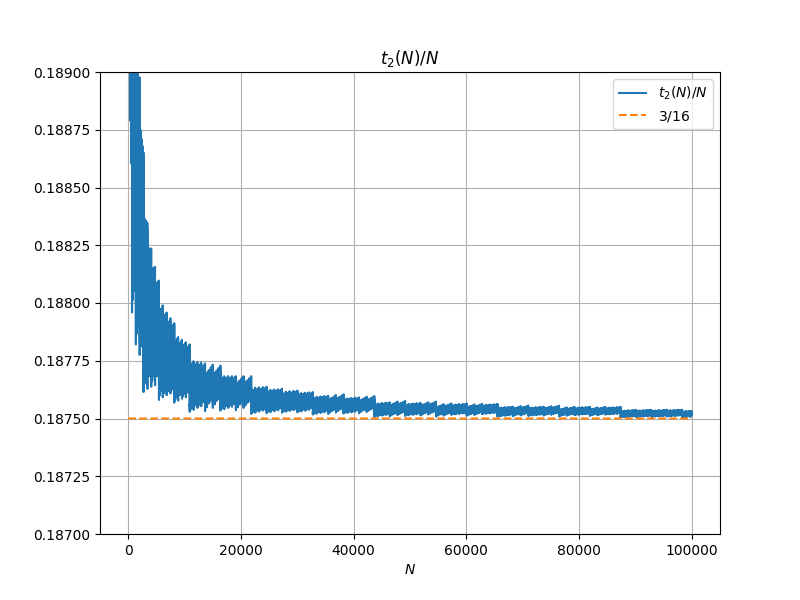}
  \vspace{-8pt}
  \caption{The function $t_2(N)/N$ was computed exactly in \cite{guy-selfridge}, and converges quickly to $3/16$.}\label{fig-t2}
\end{figure}

Unfortunately, our further applications of Propositions \ref{rearrange-crit} and \ref{asym-crit} are not as human-readable as this example.
However, these two criteria are (infinite) linear programs, so they are very amenable to computer-assisted proofs.
Furthermore, the solutions to the linear programs can be verified with relatively simple code and in exact arithmetic, thereby avoiding some potential pitfalls of computer-assisted proofs.

\begin{proposition}[\Cref{main}(iii) for sufficiently large $N$ only] \label{onethird-large} Let $N$ be sufficiently large.  Then $t(N) \ge N/3$.
\end{proposition}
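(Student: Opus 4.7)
The plan is to apply Proposition \ref{asym-crit} with $\alpha = 1/3$ and to exhibit, via a finite computer-assisted linear programming search, a downset $\mathcal{D}$ and a sequence $(a_\ell)_{\ell \geq 1}$ of nonnegative rationals with $a_\ell$ eventually vanishing, such that conditions (i) and (ii) both hold with strict inequality. Once such data is found, Proposition \ref{asym-crit} delivers $t(N) \geq N/3$ for all sufficiently large $N$.

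The first step is to fix a downset $\mathcal{D}$ large enough for the ``budget'' quantities $S_p \coloneqq \sum_{d \in \mathcal{D}} \sigma_{d,\mathcal{D}} \nu_p(d)/d$ to dominate the prime-cost side of condition (i), while also being finite enough to keep the linear program tractable. Since the examples with $\mathcal{D}=\{1,2,4\}$ and $\mathcal{D}=\{1,2,3,4\}$ only reach $\alpha = 3/16$ and $\alpha = 1/4$ respectively (the latter being tight by \Cref{main}(v)), $\mathcal{D}$ must incorporate primes beyond $\{2,3\}$ and/or higher prime powers; a natural choice is to take $\mathcal{D}$ to consist of all $5$-smooth (or $7$-smooth) numbers up to some moderate bound, possibly enlarged by a handful of further smooth elements. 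Note that condition (i) forces $a_\ell = 0$ whenever $\ell$ has a prime factor $p \notin \mathcal{D}$ (the right-hand side is then zero), so the support of the $a_\ell$ is automatically restricted to $\mathcal{D}$-smooth integers.

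The second step is to truncate at some threshold $L$ and set $a_\ell = 0$ for $\ell > L$. This is permissible because the right-hand side of \eqref{l-eq}, $\sum_{d \in \mathcal{D}} \sigma_{d,\mathcal{D}} \min(1/d, \alpha/\ell)$, is bounded by $\alpha/\ell \cdot \sum_d \sigma_{d,\mathcal{D}} = O(1/\ell)$ for $\ell$ large, so (ii) for $\ell \geq L$ reduces to a finite inequality comparing the mass $\sum_{\ell' > \ell, \ell' \leq L} a_{\ell'}$ against an explicit decaying quantity, checkable once $a_\ell$ is known. The remaining problem is a finite linear program with unknowns $\{a_\ell : \ell \leq L, \ell \text{ is } \mathcal{D}\text{-smooth}\}$, one inequality per prime $p \in \mathcal{D}$ from (i), and one inequality per $\ell \leq L$ from (ii).

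The third step is to solve this LP and verify its feasibility in exact arithmetic (using rational arithmetic so the certificate is rigorous). Given feasibility with strict inequalities, apply Proposition \ref{asym-crit} to conclude. The main obstacle is the combinatorial choice of $\mathcal{D}$: the downset must be rich enough that the ``surplus pool'' $\{S_p\}_{p \in \mathcal{D}}$ can support the multiplicative factors $\ell$ demanded by (ii) at the threshold $\alpha = 1/3$, yet structured enough that the resulting LP is both feasible and small. Since the analogous LP is infeasible when $\mathcal{D}$ is too small, one expects to search over candidate downsets until a feasible one is found; fortunately the verification of any single candidate is mechanical and independently checkable.
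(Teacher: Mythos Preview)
Your overall strategy---apply \Cref{asym-crit} with $\alpha=1/3$, reduce to a finite linear program, and verify a rational certificate---is exactly what the paper does.  However, your truncation step contains a fatal gap.  You propose to set $a_\ell=0$ for all $\ell>L$ and then argue that condition (ii) for $\ell\geq L$ ``reduces to a finite inequality comparing the mass $\sum_{\ell'>\ell,\ \ell'\leq L} a_{\ell'}$ against an explicit decaying quantity''.  But for $\ell\geq L$ that sum is empty, hence zero, whereas the right-hand side of \eqref{l-eq} is strictly positive for every $\ell$ (it is at least $\sigma_{1,\mathcal D}\,\alpha/\ell>0$).  So the strict inequality in (ii) fails outright for all $\ell\geq L$, and no choice of the finitely many remaining $a_\ell$ can rescue it.  Eventually-vanishing sequences are simply not admissible inputs to \Cref{asym-crit}.

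The paper resolves this by imposing a geometric tail rather than a hard cutoff: one takes the ansatz $a_{2^r}=c/2^r$ for all $r\geq r_0$ (with $a_\ell=0$ for all other $\ell>2^{r_0}$), so that $\sum_{\ell'>\ell}a_{\ell'}\gg 1/\ell$ for large $\ell$, matching the $O(1/\ell)$ decay of the right-hand side of \eqref{l-eq}.  With this ansatz the problem becomes a genuinely finite LP in the variables $c$ and $\{a_\ell:\ell\leq 2^{r_0}\}$, since (i) and (ii) for large $\ell$ collapse to a single constraint on $c$.  A secondary point: the downset the paper actually uses is $\mathcal D=\{1,\dots,2^{11}\}$, containing all primes up to $2048$, which is substantially richer than the $5$- or $7$-smooth candidates you suggest; given the tightness of the $\alpha=1/4$ barrier for $\{2,3\}$-rearrangement (\Cref{main}(v)) and the numerics in \Cref{fig-t5}, \Cref{fig-t7}, you should expect to need many more primes in $\mathcal D$ to reach $\alpha=1/3$.
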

\begin{proof}[Computer-assisted proof]
If one makes the ansatz $a_{2^r} = c / 2^r$ for some $c>0$ and all $r \geq r_0$, with $a_\ell = 0$ for all other $\ell > 2^{r_0}$, then the task of locating weights $a_r$ obeying the hypotheses of \Cref{asym-crit} becomes a \emph{finite} linear programming problem.
Numerically\footnote{\url{https://github.com/teorth/erdos-guy-selfridge/blob/main/src/dnup/data.py}}, we were able to locate such weights for $\alpha=1/3$, ${\mathcal D} = \{d: 1 \leq d \leq 2^{11}\}$, and $r_0=11$.
Once the weights were found, they were converted into rational numbers and the linear program was verified\footnote{\url{https://github.com/teorth/erdos-guy-selfridge/blob/main/src/dnup/verify.py}} in exact arithmetic, thus establishing that $t(N) \geq N/3$ for sufficiently large $N$.
\end{proof}

We do not explicitly compute a bound on $N$ which is necessary for the previous proposition to hold because we expect it to be significantly worse than that obtained by the modified approximate factorization strategy later in \Cref{10^{11}}, and in particular, far worse than what would be necessary to link up with the calculations using the greedy method of \Cref{greedy-sec}.
However, we can use \Cref{rearrange-crit} to prove \Cref{main}(ii):
\begin{proposition}\label{two-sevenths} Suppose $N\ne 56$.  Then $t_{2,3,5,7}(N) \ge \lfloor 2N/7\rfloor$.
\end{proposition}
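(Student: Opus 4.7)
The plan is to apply \Cref{rearrange-crit} with $\alpha = \lfloor 2N/7\rfloor / N$, a downset ${\mathcal D}$ consisting entirely of $7$-smooth numbers (i.e.\ numbers whose prime factors lie in $\{2,3,5,7\}$), and non-negative weights $a_\ell$ supported on $7$-smooth $\ell$. Because the $a_\ell$ vanish for all $\ell$ with a prime factor $p>7$, condition \eqref{i-eq-1} is automatic for such $p$ (both sides are zero), so only the four conditions at $p\in\{2,3,5,7\}$ need to be checked. A successful application of \Cref{rearrange-crit} in this form produces a factorization obtained solely by redistributing primes in $\{2,3,5,7\}$, which is exactly what is required for the ``moreover'' clause, and therefore bounds $t_{2,3,5,7}(N)$ from below.

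For large $N$ (say $N \geq 8\times 10^6$), the plan is to find a single finite-support collection of rational weights $(a_\ell)_{\ell\in S}$, with $S$ a finite set of $7$-smooth numbers, such that the hypotheses of \Cref{rearrange-crit} hold for every $N$ in this range simultaneously. Analogously to \Cref{onethird-large}, the weights will be produced by numerically solving the (finite) linear program with $\alpha=2/7$, then converted to exact rationals and verified symbolically. To convert the asymptotic version of \Cref{asym-crit} into the uniform statement of \Cref{rearrange-crit}, the counts $\#(A_{d,{\mathcal D}}\cap[1,N/d])$ and $\#(A_{d,{\mathcal D}}\cap[1,\min(N/d,\alpha N/\ell)])$ will be controlled via \Cref{lit} and \Cref{lit7}, each introducing only $O(1)$ error; so long as the LP solution has enough slack relative to the asymptotic form, these $O(1)$ errors (and the perturbation $\alpha=\lfloor 2N/7\rfloor/N$ versus $2/7$, which costs $O(1/N)$) are absorbed for $N\ge 8\times 10^6$.

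For the remaining range $1 \le N \le 1.2\times 10^7$ with $N\neq 56$, the plan is to use a variant of the integer program \eqref{mj-sum}--\eqref{constraints} adapted so that only rearrangements of the primes $2,3,5,7$ in the standard factorization $\{1,\dots,N\}$ are allowed (equivalently, one enforces, for each $d\in {\mathcal D}$ in the partition \eqref{partition}, the integer count of how many elements of $d\cdot A_{d,{\mathcal D}}\cap[1,N]$ retain each multiplier). Running this integer program for each $N$ in this range produces, except at $N=56$, an explicit $\lfloor 2N/7\rfloor$-admissible $\{2,3,5,7\}$-rearrangement of $\{1,\dots,N\}$, which can then be independently verified with a short script. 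The exceptional case $N=56$ requires no proof (it is excluded by hypothesis), but provides the reason the exclusion is needed: the IP solver reports infeasibility there.

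The main obstacle is the large-$N$ step. Because $\lfloor 2N/7\rfloor/N$ is strictly less than the LP-optimal $\alpha$ only by $O(1/N)$, the slack available to swallow the $O(1)$ errors coming from \Cref{lit} and \Cref{lit7} is quite thin; thus one must choose ${\mathcal D}$ and $S$ large enough that the LP returns weights sitting strictly in the interior of the feasible region for $\alpha=2/7$. A secondary concern is the scale of the small-$N$ computation: $1.2\times 10^7$ individual integer programs is a nontrivial amount of work, so in practice one would process $N$ in dyadic (or larger) blocks, reusing certificates across $N$ by appending the factor $N+1$ to pass from a valid rearrangement at $N$ to one at $N+1$ (using that $\lfloor 2(N+1)/7\rfloor \le \lfloor 2N/7\rfloor + 1$ and that $N+1 \ge \lfloor 2(N+1)/7\rfloor$ for $N\ge 2$).
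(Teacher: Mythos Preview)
Your overall architecture matches the paper's: integer programming for $N \le 1.2\times 10^7$, and \Cref{rearrange-crit} with a $7$-smooth downset and $7$-smooth weights for $N \ge 8\times 10^6$, with \Cref{lit7} controlling the density errors. The thresholds and the reduction to $p\in\{2,3,5,7\}$ are exactly right.

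However, your large-$N$ step has a real gap: a single \emph{finite-support} weight vector cannot satisfy hypothesis \eqref{l-eq-1} of \Cref{rearrange-crit} for all $N$. Indeed, if $\ell_0 = \max S$, then for every $\ell>\ell_0$ the left side $\sum_{\ell'>\ell} a_{\ell'}$ vanishes; but the right side is at least $1/N$ whenever $\ell \le \alpha N$ (since $1\in A_{d,{\mathcal D}}$ for every $d$). So \eqref{l-eq-1} fails on the entire range $\ell_0 < \ell \le \alpha N$, which is nonempty once $N > \ell_0/\alpha$. This is not the analogue of \Cref{onethird-large}: there the ``finite LP'' refers to finitely many \emph{free parameters}, with an explicit infinite tail ansatz $a_{2^r}=c/2^r$ for $r\ge r_0$, not finite support. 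A second, smaller issue is that \Cref{rearrange-crit} also requires $a_\ell N\in\Z$, which a fixed rational vector will not satisfy for all $N$.

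The paper's fix is precisely to carry an infinite tail: it takes ${\mathcal D}$ to be the $7$-smooth numbers up to $28$, solves the LP with the ansatz $a_\ell = a_{\ell/2}/2$ for $\ell \ge 52$ (so finitely many unknowns but infinite support), and then for each $N$ applies \Cref{rearrange-crit} to the $N$-dependent modification $a'_\ell \coloneqq \lceil a_\ell N\rceil/N$ truncated at $\ell < 2^L N$. The ceiling handles integrality; the truncation is harmless for \eqref{l-eq-1} because it only removes mass above $\alpha N$; and the errors in \eqref{i-eq-1} from the ceilings are bounded by counting the (logarithmically many) nonzero terms and comparing against the \Cref{lit7} estimate. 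Your proposal becomes correct once you replace ``finite support'' by this kind of tail ansatz and track these two modifications explicitly.
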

\begin{proof}[Computer-assisted proof]
For small $N$, we are able to compute $t_{2,3,5,7}(N)$ exactly using integer programming techniques similar to those mentioned earlier.
This verifies that $t_{2,3,5,7}(N)\ge \lfloor 2N/7\rfloor$ for all $N\le 1.2\times 10^7$, except $N=56$ of course.
As a result, in the remainder of the proof we may assume that $N$ exceeds this bound.

In the asymptotic regime, we can apply \Cref{asym-crit} directly.
The simplest setup that we found has $\mathcal{D}=\{1,2,3,4,5,6,7,8,9,10,12,14,15,16,18,20,21,24,25,27,28\}$, the $7$-smooth numbers (that is, the numbers only divisible by primes up to $7$) up to $28$.
Note that since we only use $7$-smooth numbers, we are indeed rearranging only prime factors up to $7$.

Because the linear program for the $a_\ell$ is infinite, we used the ansatz that $a_\ell = a_{\ell/2}/2$ if $\ell\ge \ell_0$ is sufficiently large to simplify the setup.
(This $\ell_0$ is not related to the $\ell_0$ in the \emph{proof} of \Cref{asym-crit}, which we do not use.)
We found a solution to the linear program that produced decent bounds with $\ell_0 = 52$, which gives explicit values for $a_\ell$ for $28$ small indices $\ell<52$ and then exponential decay afterwards as above, so for example $\ell_{54}=\ell_{27}/2$.
In this setup, the terms in \eqref{i-eq}~and~\eqref{l-eq} are manageable infinite series and it is fairly straightforward to verify the conditions are satisfied.
This proves the result for sufficiently large $N$, but it is more difficult to compute precisely how large $N$ should be.

For a given $N$, we produce an explicit sequence $a_\ell$ that satisfies the conditions of \Cref{rearrange-crit}, but we use a different strategy from the proof of \Cref{asym-crit}.
Specifically, we define an alternative modified sequence $ a'_\ell \coloneqq \lceil a_\ell N \rceil/N$ if $\ell < 2^L N$ and $0$ otherwise.
Because $a'_\ell N$ are integers, it suffices to verify the conditions \eqref{i-eq-1}~and~\eqref{l-eq-1}.
There are two effects on the terms on the left-hand sides of these inequalities, which is that they potentially get slightly larger due to the ceiling, but also the infinite sums get smaller because we truncate their tails.

In the left-hand sum $\sum_{\ell=1}^\infty \nu_p(\ell) a'_\ell$ of \eqref{i-eq-1}, we ignore the effect of the truncation (as it only makes the situation better for us) and bound the effect of the ceilings.
Because $\lceil a_\ell N\rceil < a_\ell N+1$, each factor $a'_\ell$ increases by at most $1/N$.
If $p=2$, then each term featuring a ceiling (those with $\ell \le 2^L N$) is at most $\nu_p(\ell) \le \nu_p(2^L N) \le L+\log_2(N)$.
If $p>2$, then each term is at most $\nu_p(\ell)$ for a ``small'' $\ell$, namely those less than $52$.
(We have $\nu_3(\ell) \le 3$, $\nu_5(\ell) \le 2$, and $\nu_7(\ell) \le 1$; note that $a_{49}=0$ in our table of explicit values.)
Each $\ell\le 2^L N$ is a power of $2$ times an odd part, so we can bound the number of terms by simply counting the number of possible odd parts $\ell$ less than $52$ with $a_\ell>0$ and considering that each can appear at most with at most $\log_2 N$ factors of two.
(The odd part $1$ can appear one more time, because of fencepost-counting reasons, but the effect is more than compensated by the larger odd parts appearing fewer times.)
All in all, the total increase to the left-hand side of \eqref{i-eq-1} relative to \eqref{i-eq} is at most $(L+\log_2(N))\cdot (\#\text{odd $\ell$s})\cdot \log_2(N)/N$ for $p=2$ and a similar but simpler expression if $p>2$.
Note that this expression is decreasing for $N\ge\exp(2)$.

Now consider \eqref{l-eq-1}.
This inequality is automatically true when $\ell > \alpha N$, so we may assume $\ell \le \alpha N$.
In the left-hand sum $\sum_{\ell' > \ell} a'_{\ell'}$, we ignore the effects of the ceilings (as it only makes the situation better for us) and bound the effect of the truncation.
We first separately compute the smallest $A$ so that $\sum_{\ell' > \ell} a_{\ell'} \le A/\ell$, by explicitly computing these sums for our values~$a_{\ell}$.
Then since $\ell \le \alpha N$, $2^L N \ge (2^L/\alpha)\ell$.
If $\sum_{\ell' > \ell} a_{\ell'} \le A/\ell$ for all $\ell$, it follows $\sum_{\ell' > 2^L N} a_{\ell'} \le \sum_{\ell' > (2^L/\alpha)\ell} a_{\ell'} \le (A \alpha/2^L)/\ell$.
So (again, ignoring the ceiling) the truncated sum $\sum_{\ell' > \ell} a'_{\ell'} = (\sum_{\ell' > \ell} a_{\ell'}) - (\sum_{\ell' > 2^L N} a_{\ell'})$ goes down by at most this much.

We must also account for the deviation of the right-hand sides from their asymptotic values.
Because our downset~$\mathcal{D}$ consists of $7$-smooth numbers, the sets $A_{d,{\mathcal D}}$ consist of $p$-smooth numbers for some $p\le 7$ (or simply of all integers).
We can thus make use of \Cref{lit7} to obtain
$$ \sum_{d \in {\mathcal D}} \frac{\nu_p(d)}{d} \frac{\# (A_{d,{\mathcal D}} \cap [1,N/d])}{N/d} = \sum_{d \in {\mathcal D}} \nu_p(d)\left(\frac{\sigma_{d,{\mathcal D}}}{d} + O_{\le}\left(\frac{53}{35N}\right)\right) $$
and
$$ \sum_{d \in {\mathcal D}} \frac{\# (A_{d,{\mathcal D}} \cap [1,\min(N/d, \alpha N/\ell)])}{N} = \sum_{d \in {\mathcal D}} \left(\sigma_{d,{\mathcal D}} \min\left( \frac{1}{d}, \frac{\alpha}{\ell}\right) + O_{\le}\left(\frac{53}{35N}\right)\right) .$$

Combining all of these error estimates, we can now programmatically compute the terms of \eqref{i-eq-1} and \eqref{l-eq-1} and verify that the inequalities hold for a particular~$N$, even if all terms $O_{\le}(C/N)$ are as extreme as possible.
We performed this verification\footnote{\url{https://github.com/teorth/erdos-guy-selfridge/blob/main/src/dnup/two_sevenths.py}} (in exact arithmetic as with all other proofs in this section), for $N=8.2\times 10^6$, which is better than the results of our small $N$ computation above required, so we are done.
\end{proof}

By a modification of the above techniques, we can now establish \Cref{main}(v) for large $N$.  Let $t_{2,3}(N)$ denote the largest $t$ for which it is possible to create a $t$-admissible factorization (or subfactorization) of $N!$ purely by rearranging powers of $2$ and $3$ in the standard factorization $\{1,\dots,N\}$.  For instance, the lower bounds produced above on $t(N)$ in fact will bound the smaller quantity $t_{2,3}(N)$ as long as ${\mathcal D}$ consists solely of $3$-smooth numbers. On the other hand, there is a limit to this method (see also \Cref{fig-t3}):

\begin{figure}
  \centering
  \includegraphics[width=0.8\textwidth]{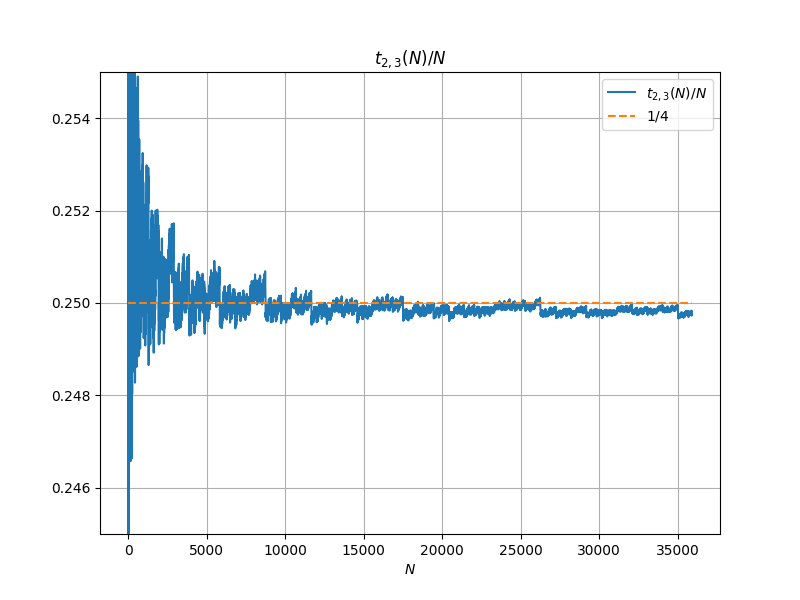}
  \vspace{-8pt}
  \caption{The function $t_{2,3}(N)/N$ can be readily computed by integer or dynamic programming, and asymptotically just barely falls below $1/4$.}\label{fig-t3}
\end{figure}

\begin{proposition}\label{onefourth}  Let $N > 26244$.  Then $t_{2,3}(N) < N/4$.  The threshold is the best possible.
\end{proposition}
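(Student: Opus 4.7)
The plan is to combine LP duality for $t_{2,3}(N)$ with a computer verification at the threshold, in the spirit of \Cref{two-sevenths} and \Cref{onethird-large}. Because only powers of $2$ and $3$ are redistributed, the 3-rough part $m(n) := n/(2^{\nu_2(n)} 3^{\nu_3(n)})$ of each $n \in \{1,\ldots,N\}$ is preserved. For each 3-rough $m$, let $k_m := \#\{s \text{ 3-smooth} : ms \leq N\}$. A rearrangement amounts to assigning each of the $k_m$ positions of type $m$ a new 3-smooth value $s$ with $ms \geq t$; writing $x_{m,s} \geq 0$ for the number of positions of type $m$ assigned $s$, the condition $t_{2,3}(N) \geq t$ is equivalent to feasibility of
\begin{align*}
  \sum_{s \text{ 3-smooth}} x_{m,s} &= k_m \quad \text{for each 3-rough } m, \\
  \sum_{m,s} x_{m,s} \nu_2(s) &\leq \nu_2(N!), \quad \sum_{m,s} x_{m,s} \nu_3(s) \leq \nu_3(N!).
\end{align*}
By LP duality (Farkas), $t_{2,3}(N) < t$ iff there exist non-negative weights $\lambda, \mu$ such that
$$ \sum_{m \text{ 3-rough},\, m<t} k_m \min\{\lambda a + \mu b : 2^a 3^b \geq t/m,\ a,b \in \mathbb{Z}_{\geq 0}\} > \lambda \nu_2(N!) + \mu \nu_3(N!). $$

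For the sharpness claim, I would exhibit explicitly a $6561$-admissible rearrangement of $\{1,\ldots,26244\}$ and record its certificate in \cite{github}. That $26244 = 4 \cdot 3^8$ is the last such $N$ reflects the fact that $N/4 = 3^8$ is itself 3-smooth, so the target value can be met purely by rearranging powers of $3$; once $N > 26244$, this structural coincidence is lost.

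For $26244 < N \leq N_0$ (with $N_0$ an effectively computed threshold, likely around $1.4 \times 10^6$ as hinted in \Cref{cases-table}), I would solve the primal LP directly and certify infeasibility at the appropriate integer $t \leq \lceil N/4\rceil$; the problem decouples across the 3-rough residues except through the two scarcity constraints, making it tractable. For $N > N_0$, I would instead choose fixed universal dual weights $(\lambda_*, \mu_*)$ (a natural first guess being weights proportional to $(\log 2, \log 3)$, perturbed slightly) and verify the dual inequality uniformly. Using Legendre's formula $\nu_2(N!) = N + O(\log N)$ and $\nu_3(N!) = N/2 + O(\log N)$, \Cref{lit} for counting 3-rough integers, and \Cref{power-lemma}(iii) for quantitative approximation of $t/m$ by 3-smooth numbers, the left-hand side becomes an asymptotic density-weighted sum which I would estimate to get a strict inequality of the form $\gamma(\lambda_*, \mu_*) > \lambda_* + \mu_*/2$ with $\gamma$ an explicit integral over 3-rough $m$.

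The main obstacle will be the quantitative margin. Since Figure~\ref{fig-t3} suggests $t_{2,3}(N)/N$ only ``just barely'' falls below $1/4$ asymptotically, the positive gap from any choice of weights is small, and the asymptotic inequality must be established with care. The irrationality of $\log_2 3$---which via \Cref{power-lemma} governs how closely reals can be approximated by 3-smooth integers---is precisely what keeps this gap strictly positive, and extracting a quantitative lower bound on $1/4 - \gamma(\lambda_*, \mu_*)$ from that approximation theory is the key technical step. If no single pair of weights works uniformly, I would split $(N_0, \infty)$ into dyadic intervals and tailor $(\lambda_*, \mu_*)$ to each, absorbing the resulting finite case analysis into an enlarged $N_0$.
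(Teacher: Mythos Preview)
Your framework—set up the LP for $t_{2,3}$, dualize, verify small $N$ directly and large $N$ via a fixed dual certificate—is the right one and matches the paper. The execution of the asymptotic step, however, has a real gap.

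With weights $(\lambda,\mu)=(\log 2,\log 3)$, the quantity $\min\{\lambda a+\mu b:2^a3^b\ge t/m\}$ is exactly $\log\lceil t/m\rceil^{\langle 2,3\rangle}$, and your dual inequality rearranges (via $\nu_2(N!)\log 2+\nu_3(N!)\log 3=\log N!-\sum_m k_m\log m$) to
\[
\sum_{n\le N}\log\max\bigl(m(n),\,m(n)\lceil t/m(n)\rceil^{\langle 2,3\rangle}\bigr)\ >\ \log N!.
\]
Each summand is at least $\log t$, but the trivial bound $N\log t$ falls short of $\log N!\approx N\log(N/e)$ by $N\log(4/e)\approx 0.386N$. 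Any positive margin must come entirely from the rounding excess $\log(\lceil t/m\rceil^{\langle 2,3\rangle}\cdot m/t)$ together with the terms with $m(n)>t$. Your plan to estimate these via \Cref{power-lemma}(iii) is backwards: that lemma bounds the rounding from \emph{above} (showing $\lceil x\rceil^{\langle 2,3\rangle}\le e^{\kappa_L}x$ with $\kappa_L\to 0$), and applying it replaces the rounding by $o(1)$ for large $t/m$—leaving you with a margin that is manifestly negative. The irrationality of $\log_2 3$ makes $3$-smooth numbers asymptotically \emph{dense}, which shrinks the rounding excess rather than creating it; what you actually need is a \emph{lower} bound on the aggregate rounding, driven by the many $m$ near $t$ where $t/m$ is small and the nearest $3$-smooth integer above it is comparatively far. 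A slight perturbation of $(\log 2,\log 3)$ cannot repair a deficit of order $N$.

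The paper's certificate is built differently and does not face this obstruction. It parametrizes the primal by the proportion $a_\ell$ of elements receiving multiplier $\ell$, with one cardinality constraint per $3$-smooth $\ell$, so the dual carries—besides weights $c_2,c_3$ analogous to your $\lambda,\mu$—a family of weights $w_\ell$. These $w_\ell$ encode the integer obstruction directly: the paper takes $c_2=2/32$, $c_3=3/32$, $w_1=2/32$, and $w_\ell=1/32$ for about thirty $3$-smooth $\ell\le 2^23^9$ with $\nu_2(\ell)\le 2$, checks the finite list of dual constraints, and obtains an asymptotic margin $\eps\approx 4.9\times 10^{-5}$ with the explicit threshold $N>1328148$. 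Your two-parameter dual is in principle at least as strong (your primal has more constraints), but extracting a positive margin from it would require tracking the discrete structure of the small $3$-smooth numbers rather than invoking their asymptotic density; without that, the argument does not close.
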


\begin{proof}[Computer-assisted proof]
For small $N$, we are able to compute $t_{2,3}(N)$ exactly using the integer programming techniques mentioned earlier.
Alternatively, specifically for this task, we also wrote a \emph{dynamic} programming implementation out of curiosity and to double-check the computations.
This verifies that $t_{2,3}(N)=N/4$ for $N=26244$, and then a combination of integer and linear programming techniques verifies that $t_{2,3}(N) < N/4$ for $26244 < N \le 5\times 10^6$.
As a result, in the remainder of this proof we may assume that $N$ exceeds this bound.

Suppose for contradiction that there is an $N/4$-admissible factorization of $N!$, where the tuple is formed by decomposing each $n \in \{1,\dots,N\}$ into the product $n=dm$ of a $3$-smooth part $d$ and a $3$-rough part $m$, and replacing $d$ by some other $3$-smooth number $\ell$.  If we let $a_\ell$ be the proportion of numbers that are multiplied by $\ell$, then to maintain balance we must have
  \begin{align*}
    \sum_\ell a_\ell &= 1, \\
    \sum_\ell \nu_2(\ell) a_\ell &= \frac{\nu_2(N!)}{N} \leq 1, \\
    \sum_\ell \nu_3(\ell) a_\ell &= \frac{\nu_3(N!)}{N} \leq \frac{1}{2}
  \end{align*}
  thanks to \eqref{legendre}, and since the $\sum_{k < \ell} a_k N$ terms that are multiplied by something less than $\ell$ must have the $m$ component at least $N/4\ell$, we also have
  $$ \sum_{k < \ell} a_k \leq \frac{1}{N} \sum_{d \in \N^{\langle 2,3 \rangle}} \sum^*_{N/4\ell \leq m \leq N/d} 1,$$
  where $\N^{\langle 2,3 \rangle}$ is the set of $3$-smooth numbers.
  The inner sum vanishes for $d > 4\ell$, and is at most $1$ for $d = 4\ell$.  For $d < 4\ell$ we can apply \Cref{lit} to conclude
  $$ \sum_{k < \ell} a_k \leq \sum_{d \in \N^{\langle 2,3 \rangle}: d < 4\ell} \left(\frac{1}{d}-\frac{1}{4\ell} + \frac{4}{3N}\right) + \frac{1}{N}.$$
  Thus, for any non-negative weights $c_2, c_3$, and $w_\ell$ for $3$-smooth $\ell$ such that
  \begin{equation}\label{ell-cond}
     c_2 \nu_2(\ell) + c_3 \nu_3(\ell) + \sum_{\ell' > \ell} w_{\ell'} \geq 1
  \end{equation}
  for all $3$-smooth $\ell$, we have
  $$  1= \sum_\ell a_\ell \leq c_2 \sum_\ell \nu_2(\ell) a_\ell + c_3 \sum_\ell \nu_3(\ell) a_\ell + \sum_{\ell} w_\ell \sum_{k < \ell} a_k \leq 1 - \eps  + \frac{C}{N}
  $$
  where
  $$ \eps \coloneqq 1 -
c_2 - \frac{c_3}{2} - \sum_{\ell} w_\ell \sum_{d \in \N^{\langle 2,3 \rangle}: d < 4\ell} \left( \frac{1}{d} - \frac{1}{4\ell} \right),$$
  $$ C \coloneqq \sum_\ell w_\ell \left(\frac{4}{3} \# \left\{ d \in \N^{\langle 2,3 \rangle}: d < 4\ell \right\} + 1\right).$$
 Thus, once one produces weights for which $\eps>0$, one obtains a contradiction for $N > C/\eps$.
  Numerical computation reveals that if one selects $c_2 = 2/32$, $c_3 = 3/32$, $w_1 = 2/32$, and $w_\ell = 1/32$ for all $3$-smooth $1 < \ell \leq 2^2 3^9$ with $\nu_2(\ell) \leq 2$, with $w_\ell=0$ otherwise, then  \eqref{ell-cond} holds for all $3$-smooth $\ell$ (note that this is automatic once $\nu_2(\ell)$ or $\nu_3(\ell)$ is large enough, so this is a finite check) with
$$\eps = \frac{218038591}{4458050224128} > 0$$
and
$$ C = \frac{1559}{24}$$
and then we obtain the desired contradiction for $N > 1328148 = \lceil\frac{C}{\eps}\rceil$. These coefficients were discovered by applying a linear program to a finite truncation of the problem, choosing the truncation so as to optimize the threshold.

As in \Cref{onethird-large}, we verified\footnote{\url{https://github.com/teorth/erdos-guy-selfridge/blob/main/src/dnup/one_fourth.py}} the linear programming certificate in exact arithmetic.
However, in this case we note that the proof is fundamentally human-checkable.
There are only a few dozen $3$-smooth numbers up to $2^2 3^9$ and the coefficients and weights are very simple.
It is possible that examining this construction in more detail would reveal some greater human understanding of the asymptotic efficiency of these rearrangement methods.
\end{proof}

\begin{figure}
  \centering
  \includegraphics[width=0.8\textwidth]{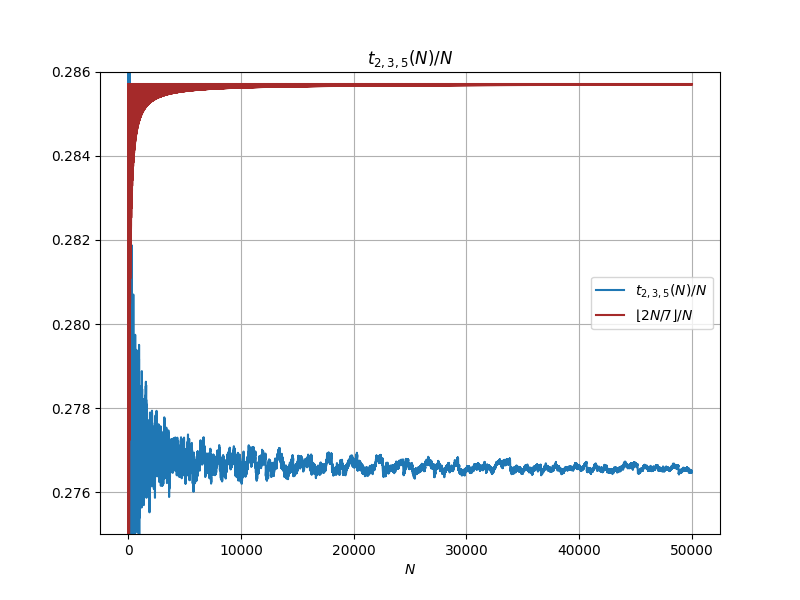}
  \vspace{-8pt}
  \caption{The function $t_{2,3,5}(N)/N$ is asymptotically larger than $1/4$, but falls well short of $\lfloor 2N/7 \rfloor/N$, let alone $1/3$ or $1/e$.}\label{fig-t5}
\end{figure}

\begin{figure}
  \centering
  \includegraphics[width=0.8\textwidth]{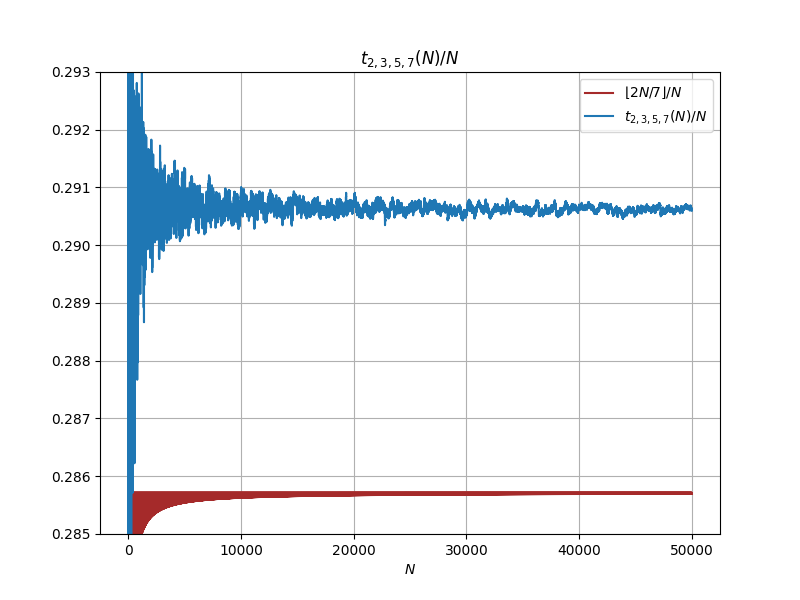}
  \vspace{-8pt}
  \caption{The function $t_{2,3,5,7}(N)/N$ numerically converges to a limit between $2/7$ and $1/3$.}\label{fig-t7}
\end{figure}

Finally, we can recover a weak version of \Cref{main}(iv) with this method:

\begin{proposition}[Asymptotic lower bound]\label{asymp}  If $0 < \alpha < 1/e$, then one has $t(N) \geq \alpha N$ for all sufficiently large $N$.
\end{proposition}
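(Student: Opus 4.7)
My plan is to apply \Cref{asym-crit} with a downset $\mathcal{D}$ of $3$-smooth numbers up to a large threshold $D$ (chosen depending on $\alpha$) and with weights $a_\ell$ supported on $3$-smooth integers. The guiding intuition is that the tiny primes $2, 3$ serve as a liquidity pool: by \Cref{power-lemma}, every positive real can be approximated multiplicatively by a $3$-smooth number with relative error at most $e^{\kappa_D}$, and $\kappa_D \to 0$ as $D \to \infty$. The supply of factors of $2$ and $3$ freed from $\{1,\dots,N\}$ by removing $d \in \mathcal{D}$ grows without bound with $D$, which should easily accommodate any $\alpha < 1/e$.

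Concretely, set $\mathcal{D} \coloneqq \{2^a 3^b : 2^a 3^b \leq D\}$, which is easily verified to be a downset. By \eqref{partition}, each $n \in [1, N]$ decomposes uniquely as $n = dm$ with $d \in \mathcal{D}$ the greedy $3$-smooth initial segment of the factorization of $n$ and $m \in A_{d, \mathcal{D}}$. For each element $m$ of the multiset $\tuple \coloneqq \biguplus_{d \in \mathcal{D}}(A_{d,\mathcal{D}} \cap [1, N/d])$ with $m < \alpha N$, I would boost $m$ by the $3$-smooth multiplier $\ell_m \coloneqq \lceil \alpha N / m \rceil^{\langle 2, 3 \rangle}$, which by \eqref{kappa-def} lies in $[\alpha N/m,\, e^{\kappa_D} \alpha N/m]$. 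Defining $a_\ell$ to be the asymptotic density (per element of $[1, N]$) of those $m$'s whose boost equals $\ell$, condition (ii) of \Cref{asym-crit} holds essentially by construction (with a negligible perturbation to ensure strict inequality).

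The verification of condition (i) is the substantive step. Since $a_\ell$ is supported on $3$-smooth integers, only $p \in \{2, 3\}$ matters. The aggregate logarithmic demand $\sum_\ell a_\ell \log \ell$ is bounded, via Stirling and the structural identity $\sum_n \log d(n) + \sum_n \log m(n) = \log N!$, by the aggregate logarithmic supply $\sum_d (\sigma_{d,\mathcal{D}}/d) \log d$ plus an $O(\kappa_D)$ slack per element. The main obstacle is the two-dimensional nature of (i): the demand at $p = 2$ and at $p = 3$ must each individually be at most the corresponding supply, not merely in aggregate. Here I would use \Cref{power-lemma}(iii) with a free parameter $\gamma \in [0, 1)$, applying \eqref{12-2} and \eqref{12-3} to control the $2$-adic and $3$-adic content of each boost $\ell_m$ separately, and choosing $\gamma$ to align the $\nu_2 : \nu_3$ demand ratio with the supply ratio (roughly $1 : \tfrac12$ by \eqref{legendre}). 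Once this balance is achieved, sending $D \to \infty$ drives $\kappa_D \to 0$ and permits $\alpha$ to approach $1/e$ arbitrarily closely, giving $t(N) \geq \alpha N$ for all sufficiently large $N$.
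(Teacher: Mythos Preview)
Your proposal has a fatal gap: with both $\mathcal{D}$ and the support of the $a_\ell$ consisting only of $3$-smooth numbers, the rearrangement touches only factors of $2$ and $3$, so whatever bound you extract from \Cref{asym-crit} is actually a bound on $t_{2,3}(N)$, not on $t(N)$. But \Cref{onefourth} (equivalently \Cref{main}(v)) proves $t_{2,3}(N) < N/4$ for all $N > 26244$. Hence your construction cannot reach any $\alpha \geq 1/4$, let alone $\alpha$ near $1/e$, no matter how large you take $D$.

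The erroneous step is the claim that the aggregate logarithmic demand is at most the supply plus $O(\kappa_D)$. Even granting perfect $3$-smooth approximation, the demand is at least $\frac{1}{N}\sum_n [\log(\alpha N/m(n))]^+$; using your structural identity together with Stirling, this equals
\[
\log(e\alpha) \;+\; \frac{1}{N}\sum_n \log d(n) \;+\; \frac{1}{N}\sum_{n:\, m(n) > \alpha N} \log\frac{m(n)}{\alpha N},
\]
so the shortfall against the supply $\frac{1}{N}\sum_n \log d(n)$ is $\log(e\alpha)$ plus the last sum. That sum does not tend to $0$ with $D$: the $3$-rough integers $n \in (\alpha N, N]$ alone (for which $m(n) = n$) contribute asymptotically $\tfrac{1}{3}\bigl(\log\tfrac{1}{\alpha} - 1 + \alpha\bigr)$, which equals $1/(3e) \approx 0.123$ at $\alpha = 1/e$. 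Thus for $\alpha$ close to $1/e$ the demand strictly exceeds the supply by a constant independent of $D$. The excess sitting in elements with $m(n) > \alpha N$ is made of non-tiny primes and cannot be returned to the $2,3$-pool, so \Cref{power-lemma} is powerless here. The paper's proof of \Cref{asymp} handles precisely this by including in $\mathcal{D}$ the numbers $2^m p$ for all primes $p$ in a long range $[p_-, p_+]$, so that those large prime factors can themselves be extracted and redistributed; the non-tiny primes in $\mathcal{D}$ are essential, not optional.
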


\begin{proof}  We select the following parameters:
  \begin{itemize}
  \item A sufficiently small real $c_0>0$ (which can depend on $\alpha$);
  \item A sufficiently large natural number $M$ (which can depend on $c_0$, $\alpha$);
  \item A sufficiently large natural number $C_0$ (which can depend on $M$, $c_0$, $\alpha$);
  \item A sufficiently large prime $p_-$ (which can depend on $C_0$, $M$, $c_0$, $\alpha$); and
  \item A prime $p_+$ with $\log p_+ \asymp \log^2 \log p_-$.
  \end{itemize}
Let ${\mathcal D}$ be the set of all numbers $d$ which are either of the form $2^m$ for $0 \leq m \leq M$, or $2^m p$ for $0 \leq m \leq M$ and $p_- \leq p \leq p_+$.  This is a downset, and the densities $\sigma_{d,{\mathcal D}}$ can be computed explicitly as
$$ \sigma_{2^m,{\mathcal D}} = \frac{1}{2} \mu_+; \qquad \sigma_{2^m p,{\mathcal D}} = \frac{1}{2} \mu_p$$
for $0 \leq m < M$ and
$$ \sigma_{2^M,{\mathcal D}} = \mu_+; \qquad \sigma_{2^M p,{\mathcal D}} = \mu_p,$$
where
$$ \mu_p \coloneqq \prod_{p_- \leq p' < p} \left( 1 - \frac{1}{p'} \right); \qquad \mu_+ \coloneqq \prod_{p_- \leq p' \leq p_+} \left( 1 - \frac{1}{p'} \right).$$
The identity \eqref{sigma-id} is then equivalent to the telescoping identity
\begin{equation}\label{telescope}
   \sum_{p_- \leq p \leq p_+} \frac{\mu_p}{p} + \mu_+ = 1.
\end{equation}
We can now define the weights $a_\ell$ as follows:
\begin{itemize}
\item[(i)] If $\ell = 2^m$ for some $m \geq 1$, we set $\alpha_\ell \coloneqq C_0 \frac{m}{2^m} \mu_+$.
\item[(ii)] If $\ell = 2^{C_0} p$ for some $p_- \leq p \leq 2^{C_0} p_-$, we set $\alpha_\ell \coloneqq \mu_p/p$.
\item[(iii)] If $\ell = p$ for some $2^{C_0} p_- < p < p_+/2$, we set $\alpha_\ell \coloneqq c_0 \mu_p/p$.
\item[(iv)] If $\ell = 2p$ for some $2^{C_0} p_- < p < p_+/2$, we set $\alpha_\ell \coloneqq (1-c_0) \mu_p/p$.
\item[(v)] If $\ell = 2^{C_0+m} p$ for some $p_+/2 \leq p \leq p_+$ and $m \geq 1$, we set $\alpha_\ell = \frac{m}{2^m} \mu_p/p$.
\item[(vi)] In all other cases, we set $\alpha_\ell = 0$.
\end{itemize}

By \Cref{asym-crit}, it suffices to verify the conditions \eqref{i-eq}, \eqref{l-eq}.  We begin with \eqref{i-eq}.  If $p$ is not equal to $2$ or is in the range $[p_-,p_+]$, then both sides of \eqref{i-eq} vanish.  If $p$ is in $[p_-,p_+]$, then a routine computation shows that both sides are equal to $\mu_p$.  For $p=2$, the right-hand side can be simplified using \eqref{telescope} to
$$ \sum_{0 \leq m \leq M}^* \frac{1}{2} \frac{m}{2^m} = 1 - O\left( \frac{M}{2^M} \right)$$
where the $*$ means that the term with $m=M$ is doubled (so $\sum_{0 \leq m \leq M}^* f(m) = \sum_{0 \leq m<M} f(m) + 2 f(M)$).  Meanwhile, the left-hand side can be computed to equal
\begin{align*}
  \sum_{m=1}^\infty \frac{C_0 m^2}{2^m} \mu_+ & + \sum_{p_- \leq p \leq 2^{C_0} p_-} C_0 \frac{\mu_p}{p} \\
   + \sum_{2^{C_0} p_- < p < p_+/2} (1-c_0) \frac{\mu_p}{p} & + \sum_{p_+/2 \leq p \leq p_+} \sum_{m=1}^\infty \frac{(C_0+m) m}{2^m} \frac{\mu_p}{p}
\end{align*}
which simplifies using \eqref{telescope} and summation in $m$ to
$$ 1 - c_0 + O_{C_0}\left( \mu_+ + \sum_{p_- \leq p \leq 2^{C_0} p_-} \frac{\mu_p}{p} + \sum_{p_+/2 \leq p \leq p_+} \frac{\mu_p}{p}\right).$$
From Mertens' theorem (or \Cref{osc-lemma}) one has
\begin{equation}\label{mertens}
  \mu_p = \frac{\log p_-}{\log p} \left( 1 + O\left( \frac{1}{\log^{10} p} \right) \right)
\end{equation}
and similarly
\begin{equation}\label{mertens-2}
  \mu_+ = \frac{\log p_-}{\log p_+} \left( 1 + O\left( \frac{1}{\log^{10} p_+} \right) \right).
\end{equation}
\eqref{i-eq} for $p=2$ then follows from the choice of parameters after a brief calculation.

It remains to verify \eqref{l-eq}.  We first consider the case $\ell < 2^{C_0} p_-$.  In this case, the left-hand side simplifies using \eqref{telescope} to
$$ \sum_{m: 2^m > \ell} \frac{C_0 m}{2^m} \mu_+ + (1-\mu_+)$$
and the right-hand side similarly simplifies to
$$ \sum_{0 \leq m \leq M}^* \frac{1}{2} \min\left( \frac{1}{2^m}, \frac{\alpha}{\ell}\right) \mu_+ + (1-\mu_+).$$
Thus it suffices to show that
\begin{equation}\label{lint} \sum_{0 \leq m \leq M}^* \frac{1}{2} \min\left( \frac{1}{2^m}, \frac{\alpha}{\ell}\right) < \sum_{m: 2^m > \ell} \frac{C_0 m}{2^m}.
\end{equation}
But the left-hand side is $\ll \frac{\log (2+\ell)}{\ell}$ and the right-hand side is $\gg C_0 \frac{\log (2+\ell)}{\ell}$, giving the claim \eqref{l-eq} in this case.

Next, we consider \eqref{l-eq} in the case $\ell \geq p_+$.  The left-hand side of \eqref{l-eq} is at least
$$ \sum_{m: 2^m > \ell} \frac{C_0 m}{2^m} \mu_+
+ \sum_{p_+/2 \leq p \leq p_+} \sum_{m: 2^{m+C_0} p > \ell} \frac{m}{2^m} \frac{\mu_p}{p}$$
and the right-hand side is at most
\begin{equation}\label{amp}
  \sum_{0 \leq m \leq M}^* \frac{1}{2} \min(\frac{1}{2^m}, \frac{\alpha}{\ell})\mu_+ + \sum_{0 \leq m \leq M}^* \frac{1}{2} \sum_{p_- \leq p \leq p_+} \min\left(\frac{1}{2^mp}, \frac{\alpha}{\ell}\right) \mu_p.
\end{equation}
By \eqref{lint} it suffices to show that
$$
\sum_{p_+/2 \leq p \leq p_+} \sum_{m: 2^{m+C_0} p > \ell} \frac{m}{2^m} \frac{\mu_p}{p}
> \sum_{0 \leq m \leq M}^* \frac{1}{2} \sum_{p_- \leq p \leq p_+} \min\left(\frac{1}{2^mp}, \frac{\alpha}{\ell}\right) \mu_p.$$
The left-hand side can be computed using \eqref{mertens} and the prime number theorem to be
$$ \gg 2^{C_0} \frac{\log p_-}{\log^2 p_+} \frac{\log (2+\ell/p_+)}{\ell}.$$
By \eqref{mertens} and \Cref{osc-lemma}, the right-hand side may be bounded by
$$ \ll (\log p_-) \sum_{m=0}^\infty \int_{p_-}^{p_+} \min\left(\frac{1}{2^m t}, \frac{1}{\ell}\right)\ \frac{dt}{\log^2 t}.$$
We can perform the summation over $m$ and bound this by
$$ \ll \frac{\log p_-}{\ell} \int_{p_-}^{p_+} \log\left(2 + \frac{\ell}{t}\right)\ \frac{dt}{\log^2 t},$$
and the claim \eqref{l-eq} in this case then follows from a routine computation.

Finally, we consider the case $2^{C_0} p_- < \ell < p_+$.
Note that if we redefined $a_{\ell'}$ to make rules (iii), (iv) apply for all $p_- \leq p \leq p_+$, and delete rules (ii) and (v), then this amounts to transferring the mass of $a_{\ell'}$ from larger $\ell'$ to smaller $\ell'$, so that the sum $\sum_{\ell' > \ell} a_{\ell'}$ does not increase.  From this observation, we see that we can lower bound the left-hand side of \eqref{l-eq} by
$$
\sum_{m: 2^m > \ell} \frac{C_0 m}{2^m} \mu_+ + \sum_{p_- \leq p \leq p_+} \frac{\mu_p}{p} \left( c_0 1_{p>\ell} + (1-c_0) 1_{2p>\ell}\right),
$$
while the right-hand side is at most \eqref{amp}.  By \eqref{lint}, it suffices to show that
\begin{equation}\label{test}
\sum_{p_- \leq p \leq p_+} \frac{\mu_p}{p} \left( c_0 1_{p>\ell} + (1-c_0) 1_{2p>\ell}\right)
\geq
\sum_{0 \leq m \leq M}^* \frac{1}{2} \sum_{p_- \leq p \leq p_+} \min\left(\frac{1}{2^mp}, \frac{\alpha}{\ell}\right) \mu_p.
\end{equation}
Applying \eqref{mertens} and \Cref{osc-lemma}, we can bound the right-hand side of \eqref{test} by
$$ \left( 1 + O\left(\frac{1}{\log^{10} p_-}\right)\right) \frac{\log p_-}{2} \sum_{0 \leq m \leq M}^* \int_{p_-}^{p_+} \min\left(\frac{1}{2^m t}, \frac{\alpha}{\ell}\right)\ \frac{dt}{\log^2 t}.$$
The minimum $\min(\frac{1}{2^m t}, \frac{\alpha}{\ell})$ is equal to $\frac{1}{2^m t}$ for $t \geq \ell/(2^m \alpha)$ and $\frac{\alpha}{\ell}$ for $t < \ell/(2^m \alpha)$.  Routine estimation then gives
\begin{align*}
&\int_{p_-}^{p_+} \min(\frac{1}{2^m t}, \frac{\alpha}{\ell})\ \frac{dt}{\log^2 t}\\
&\quad \leq \frac{1}{2^m \log \frac{\ell}{\alpha 2^m}} - \frac{1}{2^m \log p_+} + \frac{1}{2^m \log^2 \frac{\ell}{\alpha 2^m}} + O_M\left( \frac{1}{\log^3 \ell}\right) \\
&\quad =  \frac{1}{2^m \log \ell} - \frac{1}{2^m \log p_+} + \frac{m \log 2 - \log \frac{1}{e\alpha}}{2^m \log^2 \ell} + O_M\left( \frac{1}{\log^3 \ell}\right).
\end{align*}
Performing the $m$ summation, we conclude that the right-hand side of \eqref{test} is at most
$$ (\log p_-) \left(\frac{1}{\log \ell} - \frac{1}{\log p_+} + \frac{\log 2 - \log \frac{1}{e\alpha}}{\log^2 \ell} + O_M\left( \frac{1}{\log^3 \ell}\right) \right).$$
Meanwhile, by \eqref{mertens}, the left-hand side of \eqref{test} can be computed to be
$$
\left( 1 + O\left(\frac{1}{\log^{10} p_-}\right)\right)
(\log p_-) \left( c_0 \sum_{\ell < p \leq p_+} \frac{1}{p \log p} + (1-c_0) \sum_{\ell/2 < p \leq p_+} \frac{1}{p \log p} \right).$$
Applying \Cref{osc-lemma} and evaluating the integrals, we can bound this by
$$
\left( 1 + O\left(\frac{1}{\log^{10} p_-}\right)\right)
(\log p_-) \left( c_0 \left( \frac{1}{\log \ell} - \frac{1}{\log p_+} \right) + (1-c_0) \left( \frac{1}{\log(\ell/2)} - \frac{1}{\log p_+} \right) \right)$$
which after routine Taylor expansion simplifies to
$$ (\log p_-) \left(\frac{1}{\log \ell} - \frac{1}{\log p_+} + \frac{\log 2 - c_0 \log 2}{\log^2 \ell} + O\left( \frac{1}{\log^3 \ell}\right) \right).$$
Since $\alpha < 1/e$, we can ensure that $c_0 \log 2 < \log\frac{1}{e\alpha}$ by taking $c_0$ small enough.  The claim \eqref{test} then follows.
\end{proof}

\section{The accounting equation}\label{accounting-sec}

Given a $t$-admissible multiset $\tuple$ (which we view as an approximate factorization of $N!$), we can apply the fundamental theorem of arithmetic \eqref{ftoa} to the rational number  $N!/\prod \tuple$ and rearrange to obtain the \emph{accounting equation}
\begin{equation}\label{accounting}
  \excess_t(\tuple) + \sum_p \nu_p\left( \frac{N!}{\prod \tuple} \right) \log p = \log N! - |\tuple| \log t
\end{equation}
where we define the \emph{$t$-excess} $\excess_t(\tuple)$ of the multiset $\tuple$ by the formula
\begin{equation}\label{excess-def}
  \excess_t(\tuple) \coloneqq \sum_{a \in \tuple} \log \frac{a}{t}.
\end{equation}

\begin{example} Suppose one wishes to factorize $5! = 2^3 \times 3 \times 5$.  The attempted $3$-admissible factorization $\tuple \coloneqq \{3,4,5,5\}$ has a $2$-surplus of $\nu_2(5!/\prod \tuple) = 1$, is in balance at $3$, and has a $5$-deficit of $\nu_5(\prod \tuple/5!) = 1$, so it is not a factorization or subfactorization of $5!$.  The $3$-excess of this multiset is
  $$ \excess_3(\tuple) = \log \frac{3}{3} + \log \frac{4}{3} + \log \frac{5}{3} + \log \frac{5}{3} = 1.3093\dots$$
  and the accounting equation \eqref{accounting} becomes
  $$ 1.3093\dots + \log 2 - \log 5 = 0.3930\dots = \log 5! - 4 \log 3.$$
  If one replaces one of the copies of $5$ in ${\mathcal B}$ with a $2$, this erases both the $2$-surplus and the $5$-deficit, and creates a factorization $\tuple' = \{2,3,4,5\}$ of $5!$; the $3$-excess now drops to
  $$ \excess_3(\tuple) = \log \frac{2}{3} + \log \frac{3}{3} + \log \frac{4}{3} + \log \frac{5}{3}  = 0.3930\dots,$$
  bringing the accounting equation back into balance.
\end{example}

In view of \Cref{subfac}, one can now equivalently describe
$t(N)$ as follows:

\begin{lemma}[Equivalent description of $t(N)$]\label{t-descrip}  $t(N)$ is the largest quantity $t$ for which there exists a $t$-admissible subfactorization of $N!$ with
$$ \excess_t(\tuple) + \sum_p \nu_p\left( \frac{N!}{\prod \tuple} \right) \log p \leq \log N! - N \log t.$$
\end{lemma}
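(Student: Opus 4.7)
The plan is to note that the claim is essentially a direct translation of the accounting equation \eqref{accounting} via the subfactorization reformulation of \Cref{subfac}. The proof will have two directions, both of them almost immediate.

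For the forward direction, I would assume $t(N) \geq t$ and, using the definition, pick a $t$-admissible factorization $\tuple$ of $N!$ of cardinality $N$. Since $\tuple$ is a (complete) factorization, $\nu_p(N!/\prod \tuple) = 0$ for every prime $p$, so the accounting equation \eqref{accounting} reduces to $\excess_t(\tuple) = \log N! - N \log t$, which trivially satisfies the claimed inequality.

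For the converse, I would assume that there is some $t$-admissible subfactorization $\tuple$ of $N!$ satisfying
\begin{equation*}
\excess_t(\tuple) + \sum_p \nu_p\!\left(\tfrac{N!}{\prod \tuple}\right) \log p \leq \log N! - N \log t.
\end{equation*}
Since $\tuple$ is a subfactorization, each $\nu_p(N!/\prod \tuple) \geq 0$, so combining the hypothesis with the identity \eqref{accounting} (which holds verbatim for any $t$-admissible multiset, whether or not it is a subfactorization) gives $\log N! - |\tuple|\log t \leq \log N! - N\log t$, i.e.\ $|\tuple| \geq N$ (assuming $t>1$, which we may since the lemma is vacuous otherwise). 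By the relaxed characterization of $t(N)$ recorded in \Cref{subfac}, the existence of such a $\tuple$ forces $t(N) \geq t$.

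Combining the two directions shows that the set of $t$ for which a subfactorization satisfying the displayed inequality exists is precisely $\{t : t \leq t(N)\}$, so its largest element is $t(N)$. There is no real obstacle here; the only thing to be careful about is invoking \Cref{subfac} correctly to pass from a subfactorization of cardinality $\geq N$ back to a genuine factorization of cardinality exactly $N$, which is what makes the two directions match up.
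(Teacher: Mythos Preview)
Your proof is correct and follows exactly the approach the paper intends: the paper itself does not write out a proof, simply prefacing the lemma with ``In view of \Cref{subfac}, one can now equivalently describe $t(N)$ as follows,'' treating it as an immediate consequence of the accounting equation \eqref{accounting} together with \Cref{subfac}. You have faithfully unpacked that one-line justification, and your handling of the degenerate case $t=1$ is adequate.
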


One can view $\log N! - N\log t$ as an available ``budget'' that one can ``spend'' on some combination of $t$-excess and $p$-surpluses.  For $t$ of the form $t = N/e^{1+\delta}$ for some $\delta>0$, the budget can be computed using the Stirling approximation \eqref{stirling} to be $\delta N + O(\log N)$.  The non-negativity of the $t$-excess and $p$-surpluses recovers the trivial upper bound \eqref{trivial}; but note that any prime $p > \frac{t}{\lfloor \sqrt{t} \rfloor}$ must inevitably contribute at least $\log \frac{\lceil t/p\rceil}{t/p}$ to the $t$-excess if it is to appear in the multiset $\tuple$.  By pursuing this line of reasoning, one can obtain an alternate proof of \Cref{upper-crit}; see \cite[Lemma 2.1]{tao}.

\section{Modified approximate factorizations}\label{approx-sec}

In this section we present and then analyze an algorithm that starts with an \emph{approximate} factorization $\tuple^{(0)}$ of $N!$, which is $t$-admissible but omits all tiny primes, and is approximately in balance in small and medium primes, and attempts to ``repair'' this factorization to establish a lower bound of the form $t(N) \geq t$.

To describe the criterion for the algorithm to succeed, it will be convenient to introduce the following notation.
For $a_+,a_- \in [0,+\infty]$, we define the asymmetric norm $|x|_{a_+,a_-}$ of a real number $x$ by the formula
$$
|x|_{a_+,a_-} \coloneqq  \begin{cases}
  a_+ |x| & x\geq 0 \\
  a_- |x| & x\leq 0,
\end{cases}
$$
with the usual convention $+\infty \times 0 = 0$.
If $a_+,a_-$ are finite, this function is Lipschitz with constant $\max(a_+,a_-)$.  One can think of $a_+$ as the ``cost'' of making $x$ positive, and $a_-$ as the
``cost'' of making $x$ negative.

The analysis of the algorithm is now captured by the following proposition.

\begin{proposition}[Repairing an approximate factorization]\label{repair}  Let $N, K$ be natural numbers, and let $1 \leq t \leq N$ be an additional parameter obeying the conditions
\begin{equation}\label{conditions}
    \frac{t}{K} \geq \sqrt{N}; \qquad \frac{t}{K^2} \geq K \geq 5.
\end{equation}
We also assume that there are additional parameters $\kappa_* > 0$ and $0 \leq \gamma_2, \gamma_3 < 1$, such that there exist $3$-smooth numbers
\begin{equation}\label{tlip}
  t \leq 2^{n_2} 3^{m_2}, 2^{n_3} 3^{m_3} \leq e^{\kappa_*} t
\end{equation}
such that
\begin{equation}\label{nm}
  2m_2 \leq \gamma_2 n_2; \qquad n_3 \leq 2\gamma_3 m_3.
\end{equation}
We define the ``norm'' of a pair $n,m$ of real numbers by the formula
$$ \| (n,m) \|_\gamma \coloneqq \max\left( \frac{n-2\gamma_2 m}{1-\gamma_2}, \frac{2m-\gamma_3 n}{1-\gamma_3} \right).$$
Let $\tuple^{(0)}$ be a $t$-admissible multiset of natural numbers, with all elements of $\tuple^{(0)}$ at most $(t/K)^2$, and suppose that one has the inequalities
\begin{equation}\label{delta-cond}
\sum_{i=1}^8 \delta_i \leq \delta
\end{equation}
and
\begin{equation}\label{alpha-cond}
  \sum_{i=1}^7 \alpha_i \leq 1
\end{equation}
where
\begin{align}
\delta_1 &\coloneqq \frac{1}{N} \excess_t(\tuple^{(1)}) \label{delta1-def}  \\
\delta_2 &\coloneqq \frac{1}{N} \sum_{t/K < p \leq N} f_{N/t}(p/N) \label{delta2-def}  \\
\delta_3 &\coloneqq \frac{\kappa_{4.5}}{N}  \sum_{3 < p_1 \leq t/K} \left|\nu_{p_1}\left( \frac{N!}{\prod \tuple^{(0)}} \right)\right| \label{delta3-def}  \\
\delta_4 &\coloneqq \kappa_{4.5} \sum_{K < p_1 \leq t/K} A_{p_1} \label{delta4-def}  \\
\delta_5 &\coloneqq \kappa_{4.5} \sum_{3 < p_1 \leq K} |A_{p_1} - B_{p_1}|_{\frac{\log p_1}{\log(t/K^2)},1} \label{delta5-def}  \\
\delta_6 &\coloneqq \frac{\kappa_{4.5}}{N} \label{delta6-def}  \\
\delta_7 &\coloneqq \frac{\kappa_*}{\log t} \left( \log \sqrt{12} - B_2 \log 2 - B_3 \log 3\right)\label{delta7-def}  \\
\delta_8 &\coloneqq \frac{2(\log t + \kappa_*)}{N} \label{delta8-def}\\
\delta &\coloneqq \frac{1}{N} \log N! - \log t\label{delta-def}
\end{align}
\begin{align}
\alpha_1 &\coloneqq \frac{1}{N} \left\| \left( \nu_{2}\left(\prod \tuple^{(0)}\right), \nu_{3}\left(\prod \tuple^{(0)}\right)\right) \right\|_\gamma \label{alpha1-def}  \\
\alpha_2 &\coloneqq \left\| (B_2, B_3) \right\|_\gamma \label{alpha2-def}  \\
\alpha_3 &\coloneqq \frac{\log \frac{t}{K} + \kappa_{**}}{N\log \sqrt{12}} \sum_{3 < p_1 \leq t/K} \left|\nu_{p_1}\left( \frac{N!}{\prod \tuple^{(0)}} \right)\right| \label{alpha3-def}  \\
\alpha_4 &\coloneqq \frac{1}{\log \sqrt{12}} \sum_{K < p_1 \leq t/K} \left(\log \frac{t}{p_1} + \kappa_{**}\right) A_{p_1} \label{alpha4-def}  \\
\alpha_5 &\coloneqq \frac{1}{\log \sqrt{12}} \sum_{3 < p_1 \leq K} \left|A_{p_1} - B_{p_1}\right|_{\frac{\log p_1}{\log(t/K^2)} (\log K^2 + \kappa_{**}), \log p_1 + \kappa_{**}} \label{alpha5-def}  \\
\alpha_6 &\coloneqq \frac{\log t + \kappa_{**}}{N\log \sqrt{12}}  \label{alpha6-def}  \\
\alpha_7 &\coloneqq \max\left( \frac{\log(2N)}{(1-\gamma_2)N\log 2},  \frac{\log(3N)}{(1-\gamma_3)N\log \sqrt{3}}\right)\label{alpha7-def}
\end{align}
\begin{align}
\kappa_{**} &\coloneqq \max(\kappa^{(2)}_{4.5, \gamma_2}, \kappa^{(3)}_{4.5, \gamma_3}) \label{kappastar-def}  \\
A_{p_1} &\coloneqq \frac{1}{N} \sum_m \nu_{p_1}(m) | \{ a \in \tuple^{(0)}: a = mp \hbox{ for a prime } p > t/K \}| \label{A-def} \\
B_{p_1} &\coloneqq \frac{1}{N} \sum_{m \leq K} \nu_{p_1}(m) \sum_{\frac{t}{m} \leq p < \frac{t}{m-1}} \left \lfloor \frac{N}{p} \right\rfloor,\label{B-def}
\end{align}
with the convention that the upper bound $p < \frac{t}{m-1}$ in \eqref{B-def} is vacuous when $m=1$.  Here:
\begin{itemize}
  \item The set ${\mathcal B}^{(1)}$ is defined by applying steps (a) and (b) below.
  \item The excess $\excess_t$ is defined in \eqref{excess-def}.
  \item The function $f_{N/t}$ is defined in \eqref{falpha-def}.
  \item The constant $\kappa_{4.5}$ is given by \eqref{kappa-45}.
\end{itemize}
Then $t(N) \geq t$.
\end{proposition}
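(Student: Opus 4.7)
The plan is to produce from $\tuple^{(0)}$ a $t$-admissible subfactorization $\tuple$ of $N!$ with cardinality at least $N$, at which point \Cref{subfac} yields $t(N) \geq t$. The argument rests on the accounting equation \eqref{accounting}, which I rewrite as
$$ \excess_t(\tuple) + \sum_p \nu_p(N!/\prod \tuple) \log p \leq N \delta, $$
valid for any $t$-admissible subfactorization of cardinality at least $N$. The two hypotheses $\sum\delta_i \leq \delta$ and $\sum\alpha_i \leq 1$ respectively bound the accumulated left-hand side and the $\{2,3\}$-content (``liquidity'') consumed by the construction, which may not exceed $\nu_2(N!)\log 2 + \nu_3(N!)\log 3 \approx N \log\sqrt{12}$.

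I would construct $\tuple$ in four stages. Stage 1 forms $\tuple^{(1)}$ by adjoining, for each prime $p\in(t/K, N]$, the factor $p\lceil t/p\rceil$ with multiplicity $\lfloor N/p\rfloor$; by \eqref{falpha-def} this contributes exactly $N\delta_2$ to the excess, and since $t/K \geq \sqrt{N}$ via \eqref{conditions}, \eqref{legendre} shows that it accounts for all of the large-prime content of $N!$. Stage 2 corrects the residual $p_1$-imbalance of $\tuple^{(1)}$ at each medium prime $3 < p_1 \leq t/K$: a $p_1$-deficit is fixed by locating elements of $\tuple^{(1)}$ of the form $mp_1$ and replacing them with $m \lceil p_1 \rceil^{\langle 2,3\rangle}$, costing at most $\kappa_{4.5}$ per replacement in excess by \Cref{power-lemma}(i), while surpluses trigger the inverse move. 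The three error terms $\delta_3,\delta_4,\delta_5$ collect these corrections, split according to whether $p_1 \leq K$ or $p_1 > K$ and whether the imbalance is inherited from $\tuple^{(0)}$ or freshly generated in Stage 1. Stage 3 uses the refined approximation $\lceil\cdot\rceil^{\langle 2,3\rangle}_L$ from \eqref{fancy-kappa-def} to convert the remaining $\{2,3\}$-imbalance into factors close to a power of $12$, contributing $N\delta_7$. Stage 4 appends filler factors of size close to $t$ until the cardinality reaches $N$, with $\delta_6$ and $\delta_8$ absorbing the per-element and logarithmic overhead respectively.

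The second hypothesis $\sum\alpha_i \leq 1$ enforces that the above operations do not overspend the $\{2,3\}$-liquidity. The key input is \eqref{12-2}--\eqref{12-3}: each $3$-smooth modification in Stage 2 consumes an amount of $\{2,3\}$-content, measured in the asymmetric norm $\|\cdot\|_\gamma$, bounded by $(\log(\text{value}) + O(\kappa_*))/\log\sqrt{12}$; summing across medium primes yields $\alpha_3,\alpha_4,\alpha_5$, while $\alpha_1,\alpha_2$ capture the starting $\{2,3\}$-content of $\tuple^{(0)}$ and of the Stage 1 additions, and $\alpha_6,\alpha_7$ encode negligible $O((\log N)/N)$ overhead. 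The freedom to choose $\gamma_2,\gamma_3$ together with the $3$-smooth targets \eqref{tlip} lets us tune the trade-off between consumption of $\nu_2$ and $\nu_3$ in the norm $\|\cdot\|_\gamma$. The main technical obstacle I expect is Stage 2: one must show that enough elements $mp_1 \in \tuple^{(1)}$ of the required form always exist to absorb the corrections, and that each replacement $m \lceil p_1 \rceil^{\langle 2,3\rangle}$ remains in the admissible range $[t, (t/K)^2]$. The size bound on $\tuple^{(0)}$, the inequality $t/K^2 \geq K \geq 5$ of \eqref{conditions}, and the explicit value $\kappa_{4.5}=\log(4/3)$ from \Cref{power-lemma}(i) are designed precisely to guarantee this, and the bookkeeping for these size constraints will be the most delicate part of the argument.
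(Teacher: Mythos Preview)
Your architecture is broadly right, but Stages 2 and 3 are misdescribed in ways that would not go through. In Stage 2, a $p_1$-surplus cannot be fixed by an ``inverse move'': there are no elements of the form $m\lceil p_1\rceil^{\langle 2,3\rangle}$ in $\tuple^{(1)}$ to invert, and replacing a factor by something smaller would threaten $t$-admissibility. Surpluses must instead be absorbed by \emph{adding} new elements that contain $p_1$. For medium primes $K < p_1 \leq t/K$ one adds $p_1\lceil t/p_1\rceil^{\langle 2,3\rangle}_{4.5}$ once per unit of surplus; for small primes $3 < p_1 \leq K$ the surplus primes are first batched greedily into products $m$ in $(t/K^2, t/K]$ (this is the origin of the asymmetric weight $\frac{\log p_1}{\log(t/K^2)}$ in $\delta_5, \alpha_5$ and of the single exceptional term $\delta_6, \alpha_6$) before packaging each as $m\lceil t/m\rceil^{\langle 2,3\rangle}_{4.5}$. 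Thus the refined rounding $\lceil\cdot\rceil^{\langle 2,3\rangle}_{4.5}$ lives in Stage 2, not Stage 3; the bounds \eqref{12-2}, \eqref{12-3} with $L=4.5$ are what control the $2,3$-content of these added factors and produce the $\kappa_{**}$ in $\alpha_3,\dots,\alpha_6$. (You also omit the removal half of Stage 1: every element of $\tuple^{(0)}$ divisible by a large prime must be deleted before the additions, and $A_{p_1}$ is exactly the $p_1$-valuation change caused by those deletions.)

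Stage 3 is the crux and is not a rounding step at all. After Stage 2 the multiset $\tuple^{(2)}$ is balanced at every prime $>3$, so $N!/\prod\tuple^{(2)} = 2^n 3^m$ for some integers $n,m$. One cannot simply absorb this by repeatedly appending one $3$-smooth number near $t$, because the surplus of $2$s and of $3$s need not be in the right proportion; this is precisely why \emph{two} targets $2^{n_2}3^{m_2}$ ($2$-heavy) and $2^{n_3}3^{m_3}$ ($3$-heavy) are hypothesized in \eqref{tlip}, \eqref{nm}. The role of the norm $\|\cdot\|_\gamma$ and of \eqref{alpha-cond} is to certify, via \eqref{legendre} and the triangle inequality, that $n - 2\gamma_2 m > 0$ and $2m - \gamma_3 n > 0$, placing $(n,2m)$ in the nonnegative cone spanned by $(n_2,2m_2)$ and $(n_3,2m_3)$; Cramer's rule then gives $\beta_2,\beta_3 \geq 0$ with $(n,2m) = \beta_2(n_2,2m_2) + \beta_3(n_3,2m_3)$, and one appends $\lfloor\beta_2\rfloor$, $\lfloor\beta_3\rfloor$ copies of the two targets. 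The terms $\delta_7, \delta_8$ then bound the excess from these appended copies and the residual tiny-prime surplus, after which \Cref{t-descrip} applies directly---no separate filler stage is needed.
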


In practice, the parameter $K$ will be quite small compared to $N$, and the quantities $\gamma_2, \gamma_3, \kappa_{*}$ will also be somewhat smaller than $1$.

\begin{remark} In the notation of this proposition, \Cref{upper-crit} can essentially be interpreted as a necessary condition $\delta_2 \leq \delta$ for $t(N) \leq t$ to be provable; to use the above proposition effectively, it is thus desirable to have all the other $\delta_i, i \neq 2$ terms be as small as possible.  The criterion in \Cref{t-descrip} can similarly be rewritten as $\delta_1+\delta_9 \leq \delta$, where
  $$ \delta_9 \coloneqq \frac{1}{N} \sum_p \left|\nu_{p_1}\left( \frac{N!}{\prod \tuple^{(0)}} \right)\right|_{\log p, \infty}.$$
In practice, $\delta_9$ is too large (or infinite) for this criterion to be directly useful; the algorithm below is intended to replace this large quantity with something much smaller, and in particular to utilize tiny primes to gain factors such as $\kappa_L$ for various $L$ in the bounds of the main $\delta_i$ terms besides the ``non-negotiable'' $\delta_2$.  The secondary condition \eqref{alpha-cond} can be interpreted as a requirement that ``enough'' tiny primes are available in the factorization of $N!$ to perform such adjustments.
\end{remark}

The rest of this section will be devoted to the proof of this proposition.  It will be convenient to divide the primes into four classes:
  \begin{itemize}
    \item \emph{Tiny primes} $p=2,3$.
    \item \emph{Small primes} $3 < p \leq K$.
    \item \emph{Medium primes} $K < p \leq t/K$.
    \item \emph{Large primes} $p > t/K$.
    \end{itemize}
Initially, the multiset $\tuple^{(0)}$ may have the ``wrong'' number of factors at large primes.  We fix this by applying the following modifications to $\tuple^{(0)}$:
\begin{itemize}
  \item[(a)] Remove all elements of $\tuple^{(0)}$ that are divisible by a large prime $p > t/K$ from the multiset.
  \item[(b)] For each large prime $p > t/K$, add $\nu_p(N!)$ copies of $p \lceil t/p \rceil$ to the multiset.
\end{itemize}
We let $\tuple^{(1)}$ be the multiset formed after completing both Step (a) and Step (b).  We make two simple observations:
\begin{itemize}
\item[(A)] Since the elements of $\tuple^{(0)}$ are at most $(t/K)^2$, all the elements removed in Step (a) are of the form $mp$ where $m \leq t/K$.
\item[(B)] For each large prime $p$ considered in Step (b),  one has $\nu_p(N!) = \lfloor N/p \rfloor$ by \eqref{legendre} and \eqref{conditions}, while $\lceil t/p \rceil \leq K \leq t/K$ (again by \eqref{conditions}).
\end{itemize}
From this, we see that $\tuple^{(1)}$ is automatically $t$-admissible, and is in balance at any large prime $p > t/K$:
$$ \nu_p\left(\frac{N!}{\prod \tuple^{(1)}}\right) = 0.$$
For medium primes $K < p_1 \leq t/K$, one can have some increase in the $p_1$-surplus coming from Step (a), which is described by \eqref{A-def}:
$$ \nu_{p_1}\left(\frac{N!}{\prod \tuple^{(1)}}\right) = \nu_{p_1}\left(\frac{N!}{\prod \tuple^{(0)}}\right) + NA_{p_1}.$$
For small or tiny primes $p \leq K$, one also has some possible decrease in the $p_1$-surplus coming from Step (b), which is described by \eqref{B-def}:
$$ \nu_{p_1}\left(\frac{N!}{\prod \tuple^{(1)}}\right) = \nu_{p_1}\left(\frac{N!}{\prod \tuple^{(0)}}\right) + N(A_{p_1} - B_{p_1}).$$
In particular, we have from \eqref{alpha1-def}, \eqref{alpha2-def} and the triangle inequality that
\begin{equation}\label{alpha-1}
\frac{1}{N} \left\|\left( \nu_2\left(\prod \tuple^{(1)}\right), \nu_3\left(\prod \tuple^{(1)}\right)\right)\right\|_\gamma \leq \alpha_1 + \alpha_2.
\end{equation}

Each element removed in Step (a) reduces the $t$-excess, while each element $p \lceil t/p \rceil$ added in Step (b) increases the $t$-excess by $\log \frac{\lceil t/p \rceil}{t/p}$, so each large prime $t/K < p \leq N$ contributes a net of $\lfloor \frac{N}{p} \rfloor \log \frac{\lceil t/p \rceil}{t/p} = f_{N/t}(p/N)$ to the $t$-excess.  Thus by \eqref{delta1-def}, \eqref{delta2-def} we have
\begin{equation}\label{excess-1}
  \frac{1}{N} \excess_t(\tuple^{(1)}) \leq \delta_1 + \delta_2.
\end{equation}

Now we bring the multiset $\tuple^{(1)}$ into balance at small and medium primes $3 < p \leq t/K$.  We make the following observations:
\begin{itemize}
\item[(C)]  If an element in $\tuple^{(1)}$ is divisible by some small or medium prime $3 < p \leq t/K$, and one replaces $p$ by $\lceil p \rceil^{\langle 2,3 \rangle}_{4.5}$ in the factorization of that element, then the $p$-deficit decreases by one, while (by \Cref{power-lemma}) the $t$-excess increases by at most $\kappa_{4.5}$, and the quantity $\| (\nu_2(\prod \tuple^{(1)}),\nu_3(\prod \tuple^{(1)}))\|_\gamma$ increases by at most $\frac{\log p + \kappa_{**}}{\log \sqrt{12}}$.  All other $p_1$-surpluses or $p_1$-deficits for $p_1 \neq 2,3,p$ remain unaffected.
\item[(D)]  If one adds an element of the form $m \lceil t/m \rceil^{\langle 2,3 \rangle}_{4.5}$ to $\tuple^{(1)}$ for some $m \leq t/K$ that is the product of small or medium primes $3 < p \leq t/K$, then the $p$-surpluses at small or medium primes $p$ decrease by $\nu_p(m)$, while (by \Cref{power-lemma}) the $t$-excess increases by at most $\kappa_{4.5}$, and the quantity $\| (\nu_2(\frac{N!}{\prod \tuple^{(1)}}), \nu_3(\frac{N!}{\prod \tuple^{(1)}}))\|_\gamma$ increases by at most $\frac{\log(t/m) + \kappa_{**}}{\log \sqrt{12}}$.  The $p$-surpluses or $p$-deficits at medium or large primes remain unaffected.
\end{itemize}

With these observations in mind, we perform the following modifications to the multiset $\tuple^{(1)}$.
\begin{itemize}
\item[(c)] If there is a $p_1$-deficit $\nu_{p_1}(\prod \tuple^{(1)}/N!) > 0$ at some small or medium prime $3 < p_1 \leq t/K$, then we perform the replacement of $p_1$ in one of the elements of $\tuple^{(1)}$ with $\lceil p_1 \rceil^{\langle 2,3\rangle}_{4.5}$ as per observation (C), repeated $\nu_{p_1}(\prod \tuple^{(1)}/N!)$ times, in order to eliminate all such deficits.
\item[(d)] If there is a $p$-surplus $\nu_p(\prod N!/\tuple^{(1)}) > 0$ at some medium prime $K < p \leq t/K$, we add the element
$p \lceil t/p \rceil^{\langle 2,3 \rangle}_{4.5}$ to $\tuple^{(1)}$ as per observation (D), $\nu_p(\prod N!/\tuple^{(1)})$ times, in order to eliminate all such surpluses at medium primes.
\item[(d')] If there are $p$-surpluses $\nu_p(\prod N!/\tuple^{(1)}) > 0$ at some small primes $3 < p \leq K$, we multiply all these primes together, then apply the greedy algorithm to factor them into products $m$ in the range $t/K^2 < m \leq t/K$, plus at most one exceptional product in the range $1 < m \leq t/K$.  For each of these $m$, add $m \lceil t/m \rceil^{\langle 2,3 \rangle}_{4.5}$ to $\tuple^{(1)}$ as per observation (D), to eliminate all such surpluses at small primes.
\end{itemize}
Let $\tuple^{(2)}$ be the multiset formed from $\tuple^{(1)}$ as the outcome of applying Steps (c), (d), and (d').  The product of all the primes arising in Step (d') has logarithm equal to
$$ \sum_{3 < p_1 \leq K} \left|\nu_{p_1}\left( \frac{N!}{\prod \tuple^{(1)}} \right) \right|_{\log p_1,0} = \sum_{3 < p_1 \leq K} \left|\nu_p\left( \frac{N!}{\prod \tuple^{(0)}} \right) \right|_{\log p_1,0}$$
and hence the number of non-exceptional $m$ arising in (d') is at most
$$ \sum_{3 < p_1 \leq K} \left|\nu_p\left( \frac{N!}{\prod \tuple^{(0)}} \right) \right|_{\frac{\log p_1}{\log(t/K^2)},0}.$$
The total excess of $\tuple^{(2)}$ is increased in Step (c) by at most
$$ \kappa_{4.5} \sum_{3 < p_1 \leq t/K} \left|\nu_{p_1}\left( \frac{N!}{\prod \tuple^{(1)}} \right) \right|_{0,1}
=  \kappa_{4.5} \sum_{3 < p_1 \leq t/K} \left|\nu_{p_1}\left( \frac{N!}{\prod \tuple^{(0)}} \right) + N(A_{p_1} - B_{p_1}) \right|_{0,1},$$
in Step (d) by at most
$$ \kappa_{4.5} \sum_{K < p_1 \leq t/K} \left|\nu_{p_1}\left( \frac{N!}{\prod \tuple^{(1)}} \right) \right|_{1,0}
=  \kappa_{4.5} \sum_{K < p_1 \leq t/K} \left|\nu_{p_1}\left( \frac{N!}{\prod \tuple^{(0)}} \right) + NA_{p_1} \right|_{1,0},$$
and in Step (c) by at most
$$ \kappa_{4.5} \left(1 + \sum_{3 < p_1 \leq K} \left|\nu_{p_1}\left( \frac{N!}{\prod \tuple^{(0)}} \right) \right|_{\frac{\log p_1}{\log(t/K^2)},0}\right).$$
From the triangle inequality and \eqref{excess-1}, \eqref{delta3-def}, \eqref{delta4-def}, \eqref{delta5-def}, \eqref{delta6-def}, we then have
\begin{equation}\label{excess-2}
   \frac{1}{N} \excess_t(\tuple^{(2)}) \leq \sum_{i=1}^6 \delta_i.
\end{equation}
Similarly, the quantity $\frac{1}{N} \| (\nu_2(\prod \tuple^{(1)}),\nu_3(\prod \tuple^{(1)}))\|_\gamma$ is increased in Step (c) by at most
$$\frac{1}{N\log \sqrt{12}} \sum_{3 < p_1 \leq t/K} \left|\nu_{p_1}\left( \frac{N!}{\prod \tuple^{(0)}} \right) + N(A_{p_1} - B_{p_1}) \right|_{0,\log p_1 + \kappa_{**}},$$
in Step (d) by at most
$$\frac{1}{N\log \sqrt{12}} \sum_{K < p_1 \leq t/K} \left|\nu_{p_1}\left( \frac{N!}{\prod \tuple^{(0)}} \right) + NA_{p_1} \right|_{\log(t/p_1) + \kappa_{**},0},$$
and in Step (d') by at most the sum of
$$\frac{1}{N\log \sqrt{12}} \sum_{3 < p_1 \leq K} \left|\nu_{p_1}\left( \frac{N!}{\prod \tuple^{(0)}} \right) + N(A_{p_1}-B_{p_1}) \right|_{\log(K^2) + \kappa_{**},0}$$
and
$$\frac{1}{N\log \sqrt{12}} \left( \log t + \kappa_{**} \right)$$
so by \eqref{alpha-1}, \eqref{alpha3-def}, \eqref{alpha4-def}, \eqref{alpha5-def}, \eqref{alpha6-def}, and the triangle inequality we have
\begin{equation}\label{alpha-2}
\frac{1}{N} \| (\nu_2(\prod \tuple^{(2)}), \nu_3(\prod \tuple^{(2)})) \|_\gamma \leq \sum_{i=1}^6 \alpha_i.
\end{equation}

By construction, the multiset $\tuple^{(2)}$ is $t$-admissible, and is in balance at all small, medium, and large primes $p > 3$; thus $N!/\prod \tuple^{(2)} = 2^n 3^m$ for some integers $n,m$.  From \eqref{alpha-2}, \eqref{alpha-cond}, \eqref{legendre}, \eqref{alpha7-def} we have
\begin{align*}
n - 2\gamma_2 m &= \nu_2(N!) - 2\gamma_2 \nu_3(N!) - \left(\nu_2\left(\prod \tuple^{(2)}\right) - 2\gamma_2 \nu_3\left(\prod \tuple^{(2)}\right)\right) \\
&\geq \nu_2(N!) - 2\gamma_2 \nu_3(N!) - N (1-\gamma_2) \sum_{i=1}^6 \alpha_i \\
&> N -
 \frac{\log N}{\log 2} - 1 - \gamma_2 N - N (1-\gamma_2) (1 - \alpha_7) \\
&= N (1-\gamma_2) \alpha_7 - \frac{\log(2N)}{\log 2} \\
&\geq 0
\end{align*}
and similarly
\begin{align*}
  2m - \gamma_3 n &= 2\nu_3(N!) - \gamma_3 \nu_2(N!) - \left(2\nu_3\left(\prod \tuple^{(2)}\right) - \gamma_3 \nu_2\left(\prod \tuple^{(2)}\right)\right) \\
&\geq 2\nu_3(N!) - \gamma_3 \nu_2(N!) - N (1-\gamma_3) \sum_{i=1}^6 \alpha_i \\
&> N - \frac{\log N}{\log \sqrt{3}} - 2 - \gamma_3 N - N (1-\gamma_3) (1 - \alpha_7) \\
&= N (1-\gamma_3) \alpha_7 - \frac{\log(3N)}{\log \sqrt{3}} \\
&\geq 0.
\end{align*}
From \eqref{nm} and Cramer's rule we conclude that $(n,2m)$ lies in the non-negative linear span of $(n_2, 2m_2)$, $(n_3, 2m_3)$, thus
\begin{equation}\label{comb}  (n,2m) = \beta_2 (n_2,2m_2) + \beta_3 (n_3,2m_3)\end{equation}
for some reals $\beta_2,\beta_3 \geq 0$.
We now create the multiset $\tuple^{(3)}$ by adding $\lfloor \beta_2 \rfloor$ copies of $2^{n_2} 3^{m_2}$ and $\lfloor \beta_3 \rfloor$ copies of $2^{n_3} 3^{m_3}$ to $\tuple^{(2)}$.  By \eqref{tlip}, this multiset remains $t$-admissible, and each element added increases the $t$-excess by at most $\kappa_*$.  The number of such elements can be upper bounded using \eqref{comb}, \eqref{legendre} as
\begin{align*}
  \lfloor \beta_2 \rfloor + \lfloor \beta_3 \rfloor &\leq \beta_2 + \beta_3 \\
  &\leq \frac{1}{\log t} \left( \beta_2 (n_2 \log 2 + m_2 \log 3) + \beta_3 (n_3 \log 2 + m_3 \log 3) \right) \\
  &= \frac{1}{\log t} (n \log 2 + m \log 3) \\
  &\leq \frac{1}{\log t} ((\nu_2(N!)-NB_2) \log 2 + (\nu_3(N!)-NB_3) \log 3) \\
  &\leq \frac{1}{\log t} \left(N \log 2 + \frac{N}{2} \log 3 - NB_2 \log 2 - NB_3 \log 3\right) \\
  &= \frac{N \log \sqrt{12}}{\log t} - \frac{N(B_2 \log 2 + B_3 \log 3)}{\log t}.
\end{align*}
By \eqref{excess-2}, \eqref{delta7-def}, we thus have
\begin{equation}\label{excess-3}
 \frac{1}{N} \excess_t(\tuple^{(3)}) \leq \sum_{i=1}^7 \delta_i.
\end{equation}
Meanwhile by construction we see that $\tuple^{(3)}$ is a subfactorization of $N!$ that is in balance at all non-tiny primes, with tiny prime surpluses bounded by
$$ \nu_2\left( \frac{N!}{\prod \tuple^{(3)}}\right) \leq n_2+n_3; \quad \nu_3\left( \frac{N!}{\prod \tuple^{(3)}}\right) \leq m_2+m_3.$$
and thus by \eqref{tlip}, \eqref{delta8-def}, we thus have
$$ \frac{1}{N} \sum_p \nu_p\left( \frac{N!}{\prod \tuple^{(3)}}\right) \log p \leq \frac{\log 2^{n_2} 3^{m_2} + \log 2^{n_3} 3^{m_3}}{N} \leq \delta_8$$
and thus by \eqref{excess-3}, \eqref{delta-cond} we have
$$ \excess_t(\tuple^{(3)}) + \sum_p \nu_p\left( \frac{N!}{\prod \tuple^{(3)}}\right) \log p \leq \log N! - N \log t.$$
Applying \Cref{t-descrip}, we conclude that $t(N) \geq t$ as claimed.

\FloatBarrier

\section{Estimating terms}\label{construction-sec}

In order to use \Cref{repair} for a given choice of $N,t$, we need to find a $t$-admissible multiset $\tuple^{(0)}$ and parameters $K, \kappa_*, \gamma_2, \gamma_3$ obeying \eqref{conditions} as well as good upper bounds on the quantities $\delta_i$, $i=1,\dots,8$ and $\alpha_i$, $i=1,\dots,7$, that can either be evaluated asymptotically or numerically.  Many of these terms will be straightforward to estimate; we discuss only the more difficult ones here.

We introduce a further natural number parameter $A$ and define
\begin{equation}\label{sigma-def}
  \sigma \coloneqq \frac{3N}{At}.
\end{equation}
We let $\tuple^{(0)}$ be the multiset of $3$-rough elements of the interval $(t, t(1+\sigma)]$, with each element repeated precisely $A$ times.  This is clearly $t$-admissible.  It has no presence at tiny primes, so
\begin{equation}\label{alpha1-vanish}
  \alpha_1 = 0.
\end{equation}
We will also introduce an auxiliary parameter $L$ to assist us with the estimates.  The influence of the parameters $A,K,L$ on the other parameters $\delta_i, \alpha_i$ (and $\gamma_2,\gamma_3,\kappa_{**}$) can be roughly summarized as follows:
\begin{itemize}
\item $\gamma_2, \gamma_3 \asymp \log L / \log N$; assuming this quantity is small enough, we have $\kappa_{**} \asymp 1$.
\item $\delta_1 \asymp 1/A$.
\item $\delta_2 \asymp 1/\log N$.
\item $\delta_3 \asymp A/K\log N$ and $\alpha_3 \asymp A/K$.
\item $\delta_5 \asymp \log^{O(1)} K/\log^2 N$ and $\alpha_2, \alpha_5 \asymp \log^{O(1)} K / \log N$.
\item $\delta_7 \asymp \kappa_L/\log N$.
\item $\delta_4, \delta_6, \delta_8, \alpha_4, \alpha_6, \alpha_7$ will be lower order terms.
\end{itemize}
We will quantify these relationships more precisely below, but they already suggest that one should take $A$ to only be moderately large (e.g., of logarithmic size), that $K$ should only be slightly larger than $A$, and that $L$ should be significantly smaller than $N$.

Using \Cref{lit}, we may estimate $\delta_1$:

\begin{lemma}\label{delta1-bound} We have
$$ \delta_1 \leq \frac{3N}{2tA} + \frac{4}{N}.$$
\end{lemma}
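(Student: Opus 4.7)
The plan is to unpack the definitions and reduce to a summation-by-parts calculation controlled by \Cref{lit}.

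First, I would observe that by the construction of $\tuple^{(0)}$ as the multiset of $3$-rough integers in $(t, t(1+\sigma)]$ each repeated $A$ times, the excess splits cleanly as
\begin{equation*}
\excess_t(\tuple^{(0)}) = A \sum^*_{t < n \leq t(1+\sigma)} \log(n/t),
\end{equation*}
where I'm interpreting $\delta_1$ as bounding the excess of $\tuple^{(0)}$ rather than $\tuple^{(1)}$ (which is consistent with the role $\delta_1$ plays in the proof of \Cref{repair}: the Step~(a) removals only reduce the excess, and the Step~(b) contributions are separately accounted for in $\delta_2$).

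Second, I would use the elementary inequality $\log(1+x) \leq x$ to replace the logarithm: $\log(n/t) \leq (n-t)/t$. This reduces matters to estimating $\sum^*_{t < n \leq t(1+\sigma)} (n-t)$, which is much more amenable to density arguments.

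Third, I would apply Abel summation with the counting function $S(x) \coloneqq \sum^*_{t < n \leq x} 1$, writing
\begin{equation*}
\sum^*_{t < n \leq t(1+\sigma)} (n-t) = t\sigma\, S(t(1+\sigma)) - \int_t^{t(1+\sigma)} S(x)\, dx,
\end{equation*}
and then substitute the estimate $S(x) = (x-t)/3 + O_\leq(4/3)$ provided by \Cref{lit}. The main term evaluates cleanly via $\int_t^{t(1+\sigma)} (x-t)/3\, dx = (t\sigma)^2/6$, yielding
\begin{equation*}
\sum^*_{t < n \leq t(1+\sigma)} (n-t) = \frac{(t\sigma)^2}{6} + O_\leq(C t\sigma)
\end{equation*}
for a small absolute constant $C$ (roughly $8/3$ on a crude accounting, though a more careful bookkeeping of the $O_\leq(4/3)$ errors in both the boundary term and the integral—potentially using that the sawtooth has zero mean over each period of length $6$—can tighten this).

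Fourth, I would divide by $t$, multiply by $A$, and divide by $N$, using the key relation $\sigma = 3N/(At)$. The main-term calculation is $\frac{At\sigma^2}{6N} = \frac{At}{6N} \cdot \frac{9N^2}{A^2 t^2} = \frac{3N}{2At}$, which is exactly the leading term claimed. The error becomes $O(A\sigma/N) = O(1/t)$ by the identity $A\sigma = 3N/t$.

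The main obstacle, to my eye, is recovering the exact error constant $\frac{4}{N}$ as stated, rather than an expression of the form $O(1/t)$ or $O(A\sigma/N)$. Getting this likely requires either exploiting cancellation between the boundary term $t\sigma\cdot S(t(1+\sigma))$ and the integral $\int S$ in the Abel summation (since both carry the same $O_\leq(4/3)$ sawtooth that may partially cancel), or else using a tighter one-sided form of \Cref{lit} tailored to intervals aligned with the periodicity of $3$-rough integers mod $6$, together with the improved second-order expansion $\log(1+x) \leq x - x^2/2 + x^3/3$ for $x \geq 0$ to shave off additional constants from the main term.
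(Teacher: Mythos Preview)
Your approach is essentially the paper's: both start from $\excess_t(\tuple^{(0)}) = A \sum^*_{t < n \leq t(1+\sigma)} \log(n/t)$, linearize the logarithm (the paper phrases this as bounding $\frac{1}{t+h}$ by $\frac{1}{t}$ after writing $\log(n/t)$ as an integral via the fundamental theorem of calculus), and then invoke \Cref{lit}. The only cosmetic difference is that the paper swaps sum and integral directly (Fubini), whereas you use Abel summation; the Fubini route is slightly cleaner because the $O_\leq(4/3)$ error enters only once rather than in both the boundary term and the integral, giving $\frac{4t\sigma}{3}$ instead of your $\frac{8t\sigma}{3}$.

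Your stated obstacle---recovering the exact constant $\frac{4}{N}$---is not a gap in your argument but rather an arithmetic slip in the paper. The paper's own computation $A\int_0^{3N/A}\bigl(\tfrac{h}{3}+\tfrac{4}{3}\bigr)\tfrac{dh}{t}$ actually evaluates to $\frac{3N^2}{2tA}+\frac{4N}{t}$, not $\frac{3N^2}{2tA}+4$ as written, so the correct bound from either method is $\delta_1 \leq \frac{3N}{2tA}+\frac{4}{t}$. Since $t \asymp N$ in every application (e.g., $t=N/3$ or $t\approx N/e$), this discrepancy is harmless: the second term is $O(1/N)$ either way and is dwarfed by the main term $\frac{3N}{2tA}$. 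You should not spend effort trying to sharpen this further.
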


\begin{proof}  By definition, we have
$$ \excess_t(\tuple^{(1)}) = A \sum^*_{t < n \leq t(1+\sigma)} \log \frac{n}{t}.$$
By the fundamental theorem of calculus, this is
$$ A \int_0^{t\sigma} \sum^*_{t < n \leq t+h} 1\ \frac{dh}{t+h}.$$
Bounding $\frac{1}{t+h}$ by $\frac{1}{t}$ and applying \Cref{lit}, \eqref{sigma-def}, we conclude that
$$
 \excess_t(\tuple^{(1)}) \leq A \int_0^{3N/A} \left(\frac{h}{3} + \frac{4}{3}\right) \frac{dh}{t} = \frac{3N^2}{2tA} + 4.
$$
and the claim follows.
\end{proof}

To construct $\gamma_2, \gamma_3, \kappa_*, n_2, m_2, n_3, m_3$, we introduce another parameter $L \geq 1$ and assume that
\begin{equation}\label{t-lower}
  t > 3L.
\end{equation}
We define $n_2,n_3,m_2,n_3$ by setting
$$
2^{n_2} 3^{m_2} \coloneqq 2^{n_0} \lceil t/2^{n_0} \rceil^{\langle 2,3 \rangle}; \quad
2^{n_3} 3^{m_3} \coloneqq 3^{m_0} \lceil t/3^{m_0} \rceil^{\langle 2,3 \rangle}$$
where $2^{n_0}, 3^{m_0}$ are the largest powers of $2,3$ respectively that are at most $t/L$.  By construction and \eqref{kappa-def}, \eqref{tlip} holds with
\begin{equation}\label{kappas-eq}
  \kappa_* = \kappa_L.
\end{equation}
We have
$$  2m_2 \leq \frac{\log \lceil t/2^{n_0} \rceil^{\langle 2,3 \rangle}}{\log \sqrt{3}}  \leq \frac{\log(2L) + \kappa_L}{\log \sqrt{3}}
$$
and
$$n_2 \geq n_0 \geq \frac{\log t - \log(2L)}{\log 2};$$
similarly
$$ n_3 \leq \frac{\log(3L)+\kappa_L}{\log 2}$$
and
$$ 2m_3 \geq \frac{\log t - \log(3L)}{\log \sqrt{3}}.$$
We conclude that \eqref{nm} holds with
\begin{equation}\label{gammas-def}
\begin{split}
\gamma_2 &\coloneqq \frac{\log 2}{\log \sqrt{3}} \frac{\log(2L) + \kappa_L}{\log t - \log(2L)} \\
\gamma_3 &\coloneqq \frac{\log \sqrt{3}}{\log 2} \frac{\log(3L) + \kappa_L}{\log t - \log(3L)};
\end{split}
\end{equation}
one can of course also take larger values of $\gamma_2,\gamma_3$ if desired.
This lets us compute the quantity $\kappa_{**}$ defined in \eqref{kappastar-def}.

To estimate $\delta_3, \alpha_3$ we use

\begin{lemma}\label{val-bound} For every $3 < p \leq t/K$, one has
\begin{equation}\label{nup}
  \nu_p\left(\frac{N!}{\prod \tuple^{(1)}}\right) =
O_{\leq}\left(\frac{4A+3}{3} \left\lceil \frac{\log N}{\log p}  \right\rceil\right).
\end{equation}
\end{lemma}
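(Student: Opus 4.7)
Fix $p$ with $3<p\le t/K$. The strategy is a level-by-level count via the identity $\nu_p(M)=\sum_{j\ge 1}\mathbf{1}_{p^j\mid M}$, which gives
$$
\nu_p\!\left(\frac{N!}{\prod \tuple^{(1)}}\right) = \sum_{j\ge 1}\Bigl(\lfloor N/p^j\rfloor-\#\{a\in\tuple^{(1)}:p^j\mid a\}\Bigr).
$$
Since every element of $\tuple^{(1)}$ is at most $\max(t(1+\sigma),2t)=O(N)$, both terms vanish for $j>J$ with $J\coloneqq\lceil\log N/\log p\rceil$ (up to an $O(1)$ shift that is harmless for the stated bound). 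It thus suffices to bound the per-level discrepancy by $(4A+3)/3$.

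The bulk of $\#\{a\in\tuple^{(1)}:p^j\mid a\}$ comes from the surviving Type I members of $\tuple^{(0)}$---namely, $A$ copies of each $3$-rough $n\in(t,t(1+\sigma)]$ with $P_+(n)\le t/K$ and $p^j\mid n$. Because $p\ne 2,3$, the map $n\mapsto n/p^j$ is a bijection between $3$-rough multiples of $p^j$ in $(t,t(1+\sigma)]$ and $3$-rough integers in $(t/p^j,t(1+\sigma)/p^j]$; by \Cref{lit} and the normalization $At\sigma/3=N$ from \eqref{sigma-def}, this count (before the $P_+$-restriction) equals $N/p^j+O_\le(4A/3)$. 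Comparing with Legendre's $\lfloor N/p^j\rfloor=N/p^j+O_\le(1)$ yields the target per-level discrepancy of $1+4A/3=(4A+3)/3$.

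It remains to absorb the two corrections introduced in passing from $\tuple^{(0)}$ to $\tuple^{(1)}$---the Type I removals (elements with $P_+(n)>t/K$) from Step~(a) and the Type II insertions (copies of $p'\lceil t/p'\rceil$ for large primes $p'>t/K$) from Step~(b). Both vanish at all levels $p^j>O(K)$: a removed $n=mp'$ has $m\le K(1+\sigma)$, forcing $p^j\mid m$ and hence $p^j\le K(1+\sigma)$; while $\lceil t/p'\rceil\le K$ forces $p^j\le K$ for any insertion to contribute. At the remaining low levels I group the two corrections by the associated large prime $p'$: by \Cref{lit} and \eqref{sigma-def} both the removed multiplicity $A\cdot\#\{3\text{-rough }m:mp'\in(t,t(1+\sigma)]\}$ and the inserted multiplicity $\lfloor N/p'\rfloor$ approximate $N/p'$, and a residue-class count modulo $p^j$ for these cofactors shows that their contributions to each level $j$ differ by a uniform $O_\le(4/3)$ term that is already absorbed in the bulk budget.

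The main obstacle will be tightening the last residue-class matching so as not to accumulate a separate $O(J)$ penalty beyond the per-level $(4A+3)/3$ allowance. I expect to handle this by a second application of \Cref{lit}---now in the cofactor variable $m=n/p'$---and to exploit that the ceilings $\lceil t/p'\rceil$ track $t/p'$ within an additive $O(1)$, which lets the comparison be carried out $p'$-by-$p'$ and $j$-by-$j$ with error terms that telescope across the $O(\log K/\log p)$ nonzero levels. An alternative, if the pointwise cancellation is insufficient, is to redistribute the residual across levels, noting that the aggregate Step~(a)/Step~(b) correction $\sum_j |R(j)-T_{II}(j)|$ is controlled by $\sum_{p'>t/K}(\log K/\log p)\cdot N/p'$, which is small enough to be folded into the main constant.
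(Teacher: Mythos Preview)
The key observation you are missing is that the statement, as printed, almost certainly contains a typographical slip: the superscript should be $\tuple^{(0)}$, not $\tuple^{(1)}$. The very first line of the paper's proof reads $\nu_p(\prod\tuple^{(1)}) = A\sum^*_{t<n\le t(1+\sigma)}\nu_p(n)$, whose right-hand side is precisely $\nu_p(\prod\tuple^{(0)})$ by definition of $\tuple^{(0)}$; and the Corollary immediately following applies the lemma to bound $\delta_3$ and $\alpha_3$, both of which are defined in \eqref{delta3-def}, \eqref{alpha3-def} in terms of $\nu_{p_1}(N!/\prod\tuple^{(0)})$. With this correction the paper's proof is exactly your ``bulk'' paragraph: expand $\nu_p$ level-by-level, apply \Cref{lit} to each interval $(t/p^j,t(1+\sigma)/p^j]$, use $At\sigma/3=N$, and compare with Legendre's formula. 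Nothing more is needed.

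Your third and fourth paragraphs are therefore attacking a genuinely harder (and, in this form, false) statement. The Step~(a)/Step~(b) corrections do \emph{not} get absorbed into the per-level budget: the paper tracks them explicitly as $NA_{p_1}$ and $NB_{p_1}$ in \eqref{A-def}, \eqref{B-def}, and they feed into the separate error terms $\delta_4,\delta_5,\alpha_2,\alpha_4,\alpha_5$. In particular, for a medium prime $K<p\le t/K$ the Step~(b) insertions $p'\lceil t/p'\rceil$ carry no factor of $p$ at all (since $\lceil t/p'\rceil\le K<p$), so there is nothing to cancel the Step~(a) removals against; the discrepancy $\nu_p(N!/\prod\tuple^{(1)})-\nu_p(N!/\prod\tuple^{(0)})$ is exactly $NA_p$, which is of order $N/(p\log N)$ and not $O(J)$. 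Your proposed ``telescoping'' and ``redistribution'' fixes cannot close this gap.
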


\begin{proof}
One has
  \begin{align*}
    \nu_p(\prod \tuple^{(1)}) &= A \sum_{t < n \leq t(1+\sigma)}^* \nu_p(n) \\
    &= A \sum_{1 \leq j \leq \frac{\log N}{\log p}} \sum_{t/p^j < n \leq t(1+\sigma)/p^j}^* 1 \\
    &= A \sum_{1 \leq j \leq \frac{\log N}{\log p}} \left(\frac{N}{p^j A} + O_{\leq}(4/3)\right) \\
    &= \frac{N}{p-1} - O_{\leq}^+\left(\frac{1}{p-1}\right)
    + O_{\leq}\left(\frac{4A}{3} \left\lceil \frac{\log N}{\log p}  \right\rceil\right) \\
    &= \frac{N}{p-1}
    - O_{\leq}^+\left(\left\lceil \frac{\log N}{\log p}  \right\rceil\right)
    + O_{\leq}\left(\frac{4A}{3} \left\lceil \frac{\log N}{\log p}  \right\rceil\right).
  \end{align*}
  Meanwhile, from \eqref{legendre} one has
  $$ \nu_p(N!) = \frac{N}{p-1} - O_{\leq}^+\left(\left\lceil \frac{\log N}{\log p}  \right\rceil\right)$$
and the claim follows.
\end{proof}

\begin{corollary}\label{delta3-alpha3-bound} One has
$$ \delta_3 \leq \frac{(4A+3)\kappa_{4.5}}{3N} \left(\pi\left(\frac{t}{K} \right) + \frac{\log N}{\log 5} \pi\left(\sqrt{N}\right)\right)$$
and
$$ \alpha_3 \leq \frac{(4A+3) \left(\log \frac{t}{K} + \kappa_{**}\right)}{3N\log \sqrt{12}}  \left(\pi\left(\frac{t}{K} \right) + \frac{\log N}{\log 5} \pi\left(\sqrt{N}\right)\right).$$
\end{corollary}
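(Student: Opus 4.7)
The plan is to apply \Cref{val-bound} termwise to the sums defining $\delta_3$ and $\alpha_3$, and then estimate the resulting prime sum by splitting the range of $p_1$ at $\sqrt{N}$. Reading the bound of \Cref{val-bound} as applying to $\tuple^{(0)}$ (its proof computes $\nu_{p_1}(\prod\tuple^{(0)})$ directly via \Cref{lit} and compares with \eqref{legendre}), for each prime $p_1$ with $3 < p_1 \leq t/K$ we have
$$\left|\nu_{p_1}\!\left(\frac{N!}{\prod \tuple^{(0)}}\right)\right| \leq \frac{4A+3}{3}\left\lfloor \frac{\log N}{\log p_1}\right\rfloor,$$
where I would note that the inner sum in the proof of \Cref{val-bound} runs over exactly $\lfloor \log N/\log p_1\rfloor$ values of $j$, so the natural count is the floor rather than the stated ceiling.

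Next I would split the prime sum at $p_1=\sqrt N$. For $p_1\leq\sqrt N$, the lower bound $p_1\geq 5$ yields $\lfloor \log N/\log p_1\rfloor \leq \log N/\log 5$, with at most $\pi(\sqrt N)$ such primes, contributing at most $\frac{\log N}{\log 5}\pi(\sqrt N)$. For $p_1>\sqrt N$, the inequality $p_1^2>N$ forces $\log N/\log p_1 < 2$, hence $\lfloor\log N/\log p_1\rfloor \leq 1$, with at most $\pi(t/K)-\pi(\sqrt N)$ terms contributing at most $1$ each. Adding and discarding the negative $-\pi(\sqrt N)$ yields
$$\sum_{3 < p_1 \leq t/K}\left\lfloor \frac{\log N}{\log p_1}\right\rfloor \leq \pi\!\left(\frac{t}{K}\right) + \frac{\log N}{\log 5}\,\pi(\sqrt N).$$

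Substituting this estimate into \eqref{delta3-def} gives the claimed bound on $\delta_3$. The definition \eqref{alpha3-def} of $\alpha_3$ is governed by the very same prime sum $\sum_{3<p_1\le t/K}\bigl|\nu_{p_1}(N!/\prod\tuple^{(0)})\bigr|$, differing from $\delta_3$ only by the prefactor $\frac{\log(t/K)+\kappa_{**}}{N\log\sqrt{12}}$ replacing $\frac{\kappa_{4.5}}{N}$, so the same estimate delivers the second bound without further work. The only real obstacle is a small bookkeeping point: using $\lceil\cdot\rceil$ in place of $\lfloor\cdot\rfloor$ on the medium-prime range $(\sqrt N, t/K]$ would produce an extra factor of~$2$ and spoil the coefficient $\pi(t/K)$ in the target bound, so one must revisit the proof of \Cref{val-bound} to confirm that the floor genuinely suffices.
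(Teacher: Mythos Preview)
Your proof is correct and follows essentially the same route as the paper's own proof, which is a one-line remark: bound $\left\lfloor \frac{\log N}{\log p} \right\rfloor \leq 1 + \frac{\log N}{\log 5}\, 1_{p \leq \sqrt{N}}$ and sum over $3 < p \leq t/K$. Your observation that the floor (rather than the ceiling printed in \Cref{val-bound}) is what the argument actually delivers, and that \Cref{val-bound} really concerns $\tuple^{(0)}$ rather than $\tuple^{(1)}$, is accurate---both are typos in the paper, and the paper's own proof of the corollary silently uses the floor as well.
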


\begin{proof} This is immediate from \Cref{val-bound} and \eqref{alpha3-def}, \eqref{delta3-def} after noting that $\lfloor \frac{\log N}{\log p} \rfloor \leq 1 + \frac{\log N}{\log 5} 1_{p \leq \sqrt{N}}$ for $3 < p \leq t/K$.
\end{proof}

The main quantities left to estimate are the quantities $\delta_4, \delta_5, \alpha_4, \alpha_5$ that involve $A_{p_1}$.  By construction of $\tuple^{(0)}$, we have
$$
A_{p_1} = \frac{1}{N} \sum_m^* \nu_{p_1}(m) \sum_{\frac{t}{K}, \frac{t}{m} < p \leq \frac{t(1+\sigma)}{m}} A.
$$
In particular, for $p > K(1+\sigma)$ the quantity $A_{p_1}$ vanishes entirely:
\begin{equation}\label{ap1-vanish}
  A_{p_1} = 0.
\end{equation}
For the remaining primes $3 < p \leq K(1+\sigma)$ one has
\begin{equation}\label{ap1-small}
A_{p_1} = \frac{A}{N} \sum_{m \leq K(1+\sigma)}^* \nu_{p_1}(m) \left( \pi\left(\frac{t(1+\sigma)}{m}\right) - \pi\left(\frac{t}{\min(m,K)} \right) \right).
\end{equation}
In practice, these expressions can be adequately controlled by \Cref{osc-lemma}, as can the quantities $B_{p_1}$.

\section{The asymptotic regime}

With the above estimates, we can now establish the lower bound in \Cref{main}(iv).  Thus we aim to show that $t(N) \geq t$ for sufficiently large $N$, where
\begin{equation}\label{main-lower}
   t \coloneqq \frac{N}{e} - \frac{c_0 N}{\log N} + \frac{N}{\log^{1+c} N} \asymp N
\end{equation}
and $0 < c < 1$ is a small absolute constant. We use the construction of $\tuple^{(0)}$ and  $K, \kappa_*, \gamma_2, \gamma_3$ from the previous section with the parameters
\begin{align}
  A &\coloneqq \lfloor \log^2 N \rfloor\label{a-asym}\\
K &\coloneqq \lfloor \log^3 N \rfloor \label{k-asym} \\
L &\coloneqq N^{0.1},\label{l-asym}
\end{align}
so from \eqref{sigma-def} one has
\begin{equation}\label{sigma-alt}
   \sigma = \frac{3N}{tA} \asymp \frac{1}{A} \asymp \frac{1}{\log^2 N}.
\end{equation}
The conditions \eqref{conditions}, \eqref{t-lower} are easily verified for $N$ large enough.

By \eqref{kappas-eq}, \eqref{l-asym}, and \Cref{power-lemma}(ii) we have
\begin{equation}\label{kappas-decay}
  \kappa_* \ll \log^{-c'} N
\end{equation}
for some absolute constant $c'>0$ (we can assume $c$ is smaller than $c'$).  From \eqref{gammas-def}, \eqref{main-lower}, \eqref{l-asym} we have
$$ \gamma_2 = \frac{1}{10} \frac{\log 2}{\log \sqrt{3}} + O\left(\frac{1}{\log N}\right), \quad
 \gamma_3 = \frac{1}{10} \frac{\log \sqrt{3}}{\log 2} + O\left(\frac{1}{\log N}\right)
$$
and hence by \eqref{kappastar-def},  \eqref{kappastar-2-def}, \eqref{kappastar-3-def} we have for sufficiently large $N$ that
$$ \kappa_{**} \ll 1.$$
By \Cref{repair}, it thus suffices to establish the inequalities \eqref{delta-cond}, \eqref{alpha-cond}.
Several of the quantities $\delta, \delta_i, \alpha_i$ can now be immediately estimated using \eqref{alpha1-vanish}, \eqref{delta1-bound}, \Cref{delta3-alpha3-bound}, \eqref{stirling}, \eqref{kappas-decay}, and the prime number theorem:
\begin{align*}
  \delta_1 &\ll \frac{1}{A} \asymp \frac{1}{\log^2 N} \\
  \delta_3 &\ll \frac{A}{K \log N} \asymp \frac{1}{\log^2 N} \\
  \delta_6 &\ll \frac{1}{N} \\
  \delta_7 &\ll \frac{\kappa_*}{\log N} \ll \frac{1}{\log^{1+c'} N} \\
  \delta_8 &\ll \frac{\log N}{N} \\
  \delta &= \frac{ec_0}{\log N} + \frac{e}{\log^{1+c} N} + O\left( \frac{1}{\log^2 N} \right)\\
\end{align*}
\begin{align*}
  \alpha_1 &= 0 \\
  \alpha_3 &\ll \frac{A}{K} \asymp \frac{1}{\log N} \\
  \alpha_6, \alpha_7 &\ll \frac{\log N}{N} \\
\end{align*}
On the interval $(t/NK,1]$, the function $f_{N/t}$ is piecewise monotone with $O(K)$ pieces, and bounded by $1$, so its augmented total variation norm is $O(K)$.  Applying \eqref{delta2-def} and \Cref{osc-lemma} (with classical error term), we have
\begin{align*}
\delta_2 &\leq \frac{1}{\log(t/K)} \int_{t/NK}^1 f_{N/t}(x)\ dx + O\left( \frac{1}{\log^2 N} \right) \\
&\leq \frac{1}{\log N} \int_{1/eK}^{N/et} f_{N/t}(etx/N)\ dx + O\left( \frac{1}{\log^2 N} \right)
\end{align*}
where we have used \eqref{falpha-bound} to manage error terms.
Similarly to the proof of \Cref{upper-bound}, the function $f_{N/t}(etx/N)$ differs from $f_e(x)$ outside of an exceptional set of measure $O(1/\log N)$, and hence by \eqref{c0-def} (and \eqref{falpha-bound}) we have
$$ \delta_2 \leq \frac{ec_0}{\log N} + O\left( \frac{1}{\log^2 N} \right).$$
To finish the verification of the conditions \eqref{delta-cond}, \eqref{alpha-cond}, it will suffice to show that
\begin{equation}\label{delta-remaining}
\delta_4, \delta_5 \ll \frac{(\log\log N)^{O(1)}}{\log^2 N}
\end{equation}
and
\begin{equation}\label{alpha-remaining}
\alpha_2, \alpha_4, \alpha_5 \ll \frac{(\log\log N)^{O(1)}}{\log N}.
\end{equation}
By Mertens' theorem (or \Cref{osc-lemma}) and \eqref{delta4-def}, \eqref{delta5-def}, \eqref{alpha2-def}, \eqref{alpha4-def}, \eqref{alpha5-def}, \eqref{sigma-alt}, it suffices to show that
\begin{equation}\label{ap1-bound}
A_{p_1}, B_{p_1} \ll \frac{(\log\log N)^{O(1)}}{p_1 \log N}
\end{equation}
for all $p_1 \leq K(1+\sigma)$ (recalling from \eqref{ap1-vanish} that $A_{p_1}$ vanishes for any larger $p_1$), as well as the variant
\begin{equation}\label{ap1-diff}
  |A_{p_1}-B_{p_1}|_{0,1} \ll \frac{(\log\log N)^{O(1)}}{p_1 \log^2 N}
\end{equation}
for $3 < p_1 \leq K$.

For \eqref{ap1-bound} we use \eqref{ap1-small}, \eqref{B-def}, and the crude bound
\begin{equation}\label{crude}
  \nu_{p_1}(m) \ll 1_{p_1|m} \log\log N
\end{equation}
 for $m \leq K(1+\sigma)$, and reduce to showing that
$$ \frac{A}{N} \sum_{m \leq K(1+\sigma)} 1_{p_1|m} \left( \pi\left(\frac{t(1+\sigma)}{m}\right) - \pi\left(\frac{t}{\min(m,K)} \right) \right)
\ll \frac{(\log\log N)^{O(1)}}{p_1 \log N}$$
and
$$ \frac{1}{N} \sum_{m \leq K} 1_{p_1|m} \sum_{\frac{t}{m} \leq p < \frac{t}{m-1}} \left \lfloor \frac{N}{p} \right\rfloor \ll
\frac{(\log\log N)^{O(1)}}{p_1 \log N}.$$
But from the Brun--Titchmarsh inequality (or \Cref{osc-lemma}) and \eqref{sigma-alt} one has
$$ \pi\left(\frac{t(1+\sigma)}{m}\right) - \pi\left(\frac{t}{\min(m,K)}\right) \ll \frac{t\sigma}{m \log N} \ll \frac{N}{Am \log N}$$
and
$$ \sum_{\frac{t}{m} \leq p < \frac{t}{m-1}} \left \lfloor \frac{N}{p} \right\rfloor \ll \frac{tm}{m^2 \log N} \ll \frac{N}{m \log N}$$
and the claim then follows from summing the harmonic series.

It remains to show \eqref{ap1-diff}.  If $3 < p_1 \leq K$, then from \eqref{ap1-small}, \eqref{sigma-alt}, \eqref{crude} and \Cref{osc-lemma} (with classical error term) we have
\begin{align*}
A_{p_1} &\geq \frac{1}{N} \sum_{m \leq K(1+\sigma)}^* \nu_{p_1}(m) \left( \frac{A t \sigma}{m \log N} + O\left( \frac{(\log\log N)^{O(1)}A t \sigma}{m \log^2 N} \right) \right) \\
&= \frac{1}{\log N} \sum_{m \leq K(1+\sigma)}^* \nu_{p_1}(m) \frac{3}{m}+ O\left( \frac{(\log\log N)^{O(1)}}{\log^2 N} \right) \\
&= \frac{1}{\log N} \sum_{m \leq K}^* \nu_{p_1}(m) \frac{3}{m}+ O\left( \frac{(\log\log N)^{O(1)}}{\log^2 N} \right)
\end{align*}
and similarly from \eqref{B-def}, \eqref{crude}, and \Cref{osc-lemma} (again with classical error term)
\begin{align*}
  B_{p_1} &\leq
  \frac{1}{N} \sum_{m \leq K} \nu_{p_1}(m) \sum_{\frac{t}{m} \leq p < \frac{t}{m-1}} \frac{N}{p}  \\
  &\leq \frac{1}{N} \sum_{m \leq K} \nu_{p_1}(m) \left( \frac{N}{\log(t/m)} \int_{t/m}^{t/(m-1)} \frac{dx}{x} + O\left( \frac{N}{\log^{10} N} \right) \right) \\
  &\leq \frac{1}{\log N} \sum_{m \leq K} \nu_{p_1}(m) \log \frac{m}{m-1} + O\left( \frac{(\log\log N)^{O(1)}}{\log^2 N} \right).
\end{align*}
It thus suffices to establish the inequality
\begin{equation}\label{key-ineq}
   \sum_{m \leq K} \nu_{p_1}(m) \log \frac{m}{m-1} \leq \sum_{m \leq K}^* \nu_{p_1}(m) \frac{3}{m}
\end{equation}
for all $p_1 > 3$.

Writing $\nu_{p_1}(m) = \sum_{j \geq 1} 1_{p_1^j|m}$, it suffices to show that
  $$ \sum_{m \leq K; p^j|m} \frac{3}{m} 1_{(m,6)=1} - \log \frac{m}{m-1} \geq 0.$$
Making the change of variables $m = p_1^j n$, it suffices to show that
  $$ \sum_{n \leq K'} \frac{3}{n} 1_{(n,6)=1} - p_1^j \log \frac{p_1^j n}{p_1^j n - 1} \geq 0$$
  for any $K' > 0$.   Using the bound
  $$ \log \frac{p_1^jn}{p_1^jn - 1} = \int_{p_1^jn-1}^{p_1^jn} \frac{dx}{x} \leq \frac{1}{p_1^jn-1}$$
  and $p^j \geq 5$, we have
  $$ p_1^j \log \frac{p_1^j n}{p_1^j n - 1} \leq \frac{1}{n-0.2}$$
  and so it suffices to show that
  \begin{equation}\label{kb}
  \sum_{n \leq K'}^* \frac{3}{n} 1_{(n,6)=1} - \frac{1}{n-0.2} \geq 0.
  \end{equation}
  Since
  $$ \sum_{n=1}^\infty \frac{1}{n-0.2}-\frac{1}{n} = \psi(0.8)-\psi(1) = 0.353473\dots,$$
  where $\psi$ here denotes the digamma function rather than the von Mangoldt summatory function, it will suffice to show that
  \begin{equation}\label{kb-2}
  \sum_{n \leq K'} \frac{3}{n} 1_{(n,6)=1} - \frac{1}{n} \geq 0.4.\end{equation}
  This can be numerically verified for $K' \leq 100$, with substantial room to spare for $K'$ large; see \Cref{fig:kb}. On a block $6a-1 \leq n \leq 6a+4$ with $a>1$, the sum is positive:
  \begin{align*}
  \sum_{6a-1 \leq n \leq 6a+4}^* \frac{3}{n}  - \frac{1}{n } &= \left(\frac{1}{6a-1} - \frac{1}{6a}\right) + \left(\frac{1}{6a-1} - \frac{1}{6a+2}\right)\\
  &\quad + \left(\frac{1}{6a+1} - \frac{1}{6a+3}\right) + \left(\frac{1}{6a+1} - \frac{1}{6a+4}\right) \\
  &\quad > 0.
  \end{align*}
The inequality for $K'>100$ is then easily verified from the $K' \leq 100$ data and the triangle inequality.

  \begin{figure}
  \centering
  \includegraphics[width=0.8\textwidth]{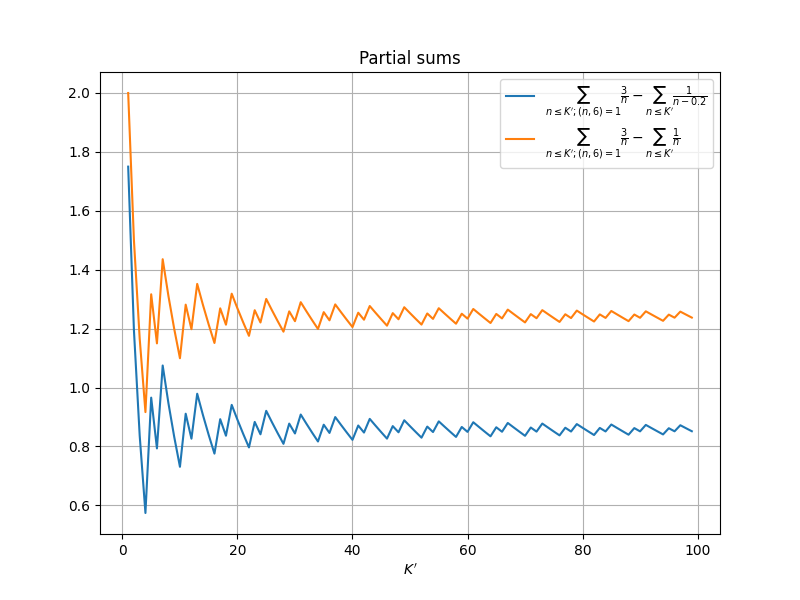}
  \vspace{-8pt}
  \caption{A plot of \eqref{kb}, \eqref{kb-2}.}
  \label{fig:kb}
  \end{figure}

\section{Guy--Selfridge conjecture} \label{10^{11}}

We now establish the Guy--Selfridge conjecture $t(N) \geq N/3$ in the range
$$ N \geq N_0 \coloneqq 10^{11}.$$
We will apply \Cref{repair} with the construction in \Cref{construction-sec} and the choice of parameters
\begin{align*}
  t &\coloneqq N/3\\
  A &\coloneqq 189\\
  K &\coloneqq 293 \\
  L &\coloneqq 4.5;
\end{align*}
the choice of $A$ and $K$ was obtained after some numerical experimentation.  In particular, by \eqref{sigma-def} we have
$$ \sigma = \frac{3N}{At}=\frac{9}{189} =0.047619\dots$$
One can readily check the required conditions \eqref{conditions}, \eqref{t-lower} for $N \geq N_0$, so it remains to verify the hypotheses \eqref{delta-cond}, \eqref{alpha-cond} of \Cref{repair} in this range.  Some of the quantities in these hypotheses involve sums over large ranges, such as $(t/K,N]$; but one can use \Cref{osc-lemma} to obtain adequate upper or lower bounds on such quantities, leaving one with sums over short ranges such as $p \leq K$ or $p \leq K(1+\sigma)$.  As such, all of the bounds needed can be quickly computed even for very large $N$ with simple computer code\footnote{\url{https://github.com/teorth/erdos-guy-selfridge/blob/main/src/python/interval/interval_computations.py}}.

Many of the bounds we will use will be monotone decreasing in $N$, so that they only need to be tested at the left endpoint $N=N_0$.  However, this is not the case for all of the bounds, as some involve subtracting one monotone quantity from another.  For those estimates, we will initially only establish bounds in two extreme cases, $N=N_0$ and $N \geq 10^{70}$, and discuss how to cover the intervening ranges $N_0 < N < 10^{70}$ at the end of the section.

We now bound some of the terms appearing in \Cref{repair}. From \Cref{power-lemma} we have
\begin{equation}\label{kappa-45}
  \kappa_{4.5} = \log \frac{4}{3} =0.28768\dots.
\end{equation}

From \eqref{gammas-def} one can take
$$ \gamma_2 \coloneqq \frac{\log 2}{\log \sqrt{3}} \frac{\log(2L) + \kappa_L}{\log(N_0/3) - \log(2L)} = 0.1423165\dots$$
and
$$\gamma_3 \coloneqq \frac{\log \sqrt{3}}{\log 2} \frac{\log(3L) + \kappa_L}{\log(N_0/3) - \log(3L)} =  0.1059116 \dots$$
for all $N \geq N_0$; by \eqref{kappastar-def} and some calculation we then have
$$ \kappa_{**} \leq 6.830101\dots.$$

From \eqref{stirling} one has
$$ \delta \geq \log N - \log t = \log \frac{3}{e} = 0.0986122\dots$$
for all $N \geq N_0$.

We will use this lower bound as our unit of reference for all other $\delta_i$ quantities, bounding them by suitable multiples of $\delta$.  For instance, from \eqref{delta1-bound} one has
$$ \delta_1 \leq \frac{9}{2A} + \frac{4}{N_0} \leq 0.241447 \delta$$
for all $N \geq N_0$.

From \eqref{delta2-def} and \Cref{osc-lemma}, and the monotonicity of $E(N)/N$, one has
\begin{align*}
  \delta_2 &\leq \frac{\int_{1/3K}^1 f_3(x)\ dx}{\log(t/K)}  + \frac{\|f_3\|_{\mathrm{TV}((1/3K,1])}}{\log(t/K)}  \frac{E(N)}{N} \\
  &\leq \frac{0.919785}{\log(N_0/3K)}  + \frac{1159.795}{\log(N_0/3K)}  \frac{E(N_0)}{N_0} \\
  &\leq 0.504735 \delta
\end{align*}
for all $N \geq N_0$.  Despite the seemingly large numerator, the second term here is in fact negligible for $N \geq N_0$, due to the square root type decay in $E(N)/N$. For $N \geq 10^{70}$ we may replace $N_0$ by $10^{70}$ and obtain the significantly better bound
$$ \delta_2 \leq 0.060410 \delta.$$

From \Cref{delta3-alpha3-bound} and \eqref{pi-upper} one has
\begin{align*}
\delta_3 &\leq \frac{(4A+3)\kappa_{4.5}}{3} \left(\!
\frac{1}{3K \log \frac{N}{3K}} + \frac{1.2762}{3K \log^2\! \frac{N}{3K}}
+ \frac{\log N}{\sqrt{N} \log\! \sqrt{N}\log 5 } + \frac{1.2762 \log N}{\sqrt{N} \log^2\!\! \sqrt{N}\log 5 } \!\right) \\
&\leq \frac{(4A+3)\kappa_{4.5}}{3} \left(\!
  \frac{1}{3K \log\frac{N_0}{3K}} + \frac{1.2762}{3K \log^2\!\frac{N_0}{3K}}
  + \frac{\log N_0}{\sqrt{N_0} \log\! \sqrt{N_0}\log 5 } + \frac{1.2762 \log N_0}{\sqrt{N_0} \log^2\!\! \sqrt{N_0}\log 5 }\! \right) \\
&\leq 0.051574 \delta
\end{align*}
for all $N \geq N_0$.

We skip $\delta_4, \delta_5, \delta_7$ for now.  From \eqref{delta6-def} we have
$$ \delta_6 \leq \frac{\kappa_{4.5}}{N_0} \leq 3 \times 10^{-11} \delta$$
for all $N \geq N_0$, and from \eqref{delta8-def} we have
$$ \delta_8 \leq \frac{2(\log(N_0/3) + \kappa_{4.5})}{N_0} \leq 6 \times 10^{-10} \delta$$
for all $N \geq N_0$, so these two terms are negligible in the analysis.

From \eqref{alpha1-vanish} we have
$$ \alpha_1 = 0.$$
We skip $\alpha_2, \alpha_4, \alpha_5$ for now.  From \Cref{delta3-alpha3-bound} and \eqref{pi-upper} one has
\begin{align*}
  \alpha_3 &\leq \frac{4A+3}{3\log \sqrt{12}} \left(\log \frac{N}{3K} + \kappa_{**}\right)  \\
&\quad \times \left(
  \frac{1}{3K \log\frac{N}{3K}} + \frac{1.2762}{3K \log^2\frac{N}{3K}}
  + \frac{\log N}{\sqrt{N} \log \sqrt{N}\log 5 } + \frac{1.2762 \log N}{\sqrt{N} \log^2 \sqrt{N}\log 5 } \right).
\end{align*}
Expanding out the product, one can check that all terms are non-increasing in $N$; so we may substitute $N_0$ for $N$ in the right-hand side, which after some calculation gives
$$ \alpha_3 \leq 0.361121$$
for all $N \geq N_0$.
From \eqref{alpha6-def} we have
\begin{align*}
   \alpha_6 &\leq \frac{\log(N_0/3) + \kappa_{**}}{N_0 \log \sqrt{12}} \leq 3 \times 10^{-10}
\end{align*}
for all $N \geq N_0$, and similarly from \eqref{alpha7-def} we have
\begin{align*}
\alpha_7 &\leq \max\left( \frac{\log(2N_0)}{(1-\gamma_2)N_0\log 2},  \frac{\log(3N_0)}{(1-\gamma_3)N_0\log \sqrt{3}}\right) \leq 6 \times 10^{-10}
\end{align*}
for all $N \geq N_0$.
so the contribution of these two terms are negligible.

Conveniently, the choice of parameters $A,K$ ensures that there are no primes in the range
$$ 293 = K < p \leq K(1+\sigma) = K(1+\sigma)=306.952\dots$$
and thus
$$ \delta_4=\alpha_4 = 0$$
for all $N \geq N_0$. (Even if this were not the case, the quantities $\delta_4,\alpha_4$ should be viewed as lower order terms, and are far smaller than several of the other $\delta_i$ or $\alpha_i$ for typical choices of parameters.)

The remaining terms $\delta_5, \delta_7, \alpha_2, \alpha_5$ to estimate involve the quantities $A_{p_1}, B_{p_1}$ defined in \eqref{A-def}, \eqref{B-def}, and require a bit more care.  For $B_{p_1}$, we can split the expression as
$$ B_{p_1} = \sum_{m \leq K} \nu_{p_1}(m) \sum_{k: a_{k,m} < b_{k,m}} \frac{k}{N} (\pi( N b_{k,m} ) - \pi(N a_{k,m}))$$
where
$$ a_{k,m} \coloneqq \max\left( \frac{1}{3m}-, \frac{1}{k} \right); \quad b_{k,m} \coloneqq \max\left( \frac{1}{3(m-1)}-, \frac{1}{k-1} \right)$$
where the $-$ denotes the subtraction of an infinitesimal quantity to reflect the restriction to the range $\frac{t}{m} \leq p < \frac{t}{m-1}$ rather than
$\frac{t}{m} < p \leq \frac{t}{m-1}$.  Using \Cref{osc-lemma} (and a limiting argument), we can upper bound this quantity by
$$ B_{p_1} \leq \sum_{m \leq K} \nu_{p_1}(m) \sum_{k: a_{k,m} < b_{k,m}} \left(\frac{k}{2\log(N a_{k,m})}+\frac{k}{2\log(N b_{k,m})}\right) (a_{k,m}-b_{k,m}) + 2\frac{E(N b_{k,m})}{N \log(N a_{k,m})} $$
and lower bound it by
\begin{align*}
   B_{p_1} &\geq \sum_{m \leq K} \nu_{p_1}(m) \sum_{k: a_{k,m} < b_{k,m}} \frac{k\left(1-\frac{2}{\sqrt{a_{k,m} N}}\right)}{\log(N (a_{k,m}+b_{k,m})/2)}  (a_{k,m}-b_{k,m}) - 2\frac{E(N b_{k,m})}{N \log(N a_{k,m})}
\end{align*}
We caution here that while the upper bound for $B_{p_1}$ is monotone decreasing in $N$, the lower bound does not have a favorable monotonicity property, particularly as it will be used when \emph{subtracting} copies of $B_{p_1}$ rather than \emph{adding} them.

From the monotonicity of the upper bound, one can use \eqref{alpha2-def} to calculate that
$$ \alpha_2 \leq 0.269878$$
for all $N \geq N_0$.  For \eqref{delta7-def}, subtraction is involved, and one must proceed with more caution.  For $N=N_0$, one has
$$ \delta_7 \leq 0.11359 \delta.$$
For $N \geq 10^{70}$, we simply discard the negative terms here and obtain the bound
$$ \delta_7 \leq \frac{\kappa_{4.5} \log \sqrt{12}}{\log(10^{70}/3)}  \leq 0.02212 \delta.$$

As for the $A_{p_1}$, we know from \eqref{ap1-vanish} that this vanishes unless $3 < p_1 \leq K(1+\sigma)$.  From \eqref{ap1-small} and \Cref{osc-lemma} one has the upper bound
\begin{align*}
A_{p_1} &\leq \sum_{m \leq K(1+\sigma)}^* \left(\frac{A \nu_{p_1}(m)}{2\log(N/3\min(m,K))} + \frac{A \nu_{p_1}(m)}{2\log(N(1+\sigma)/3m)} \right) \left( \frac{1+\sigma}{3m} - \frac{1}{3\min(m,K)}\right) \\
& \quad \quad + \frac{A \nu_{p_1}(m)}{\log(N/3\min(m,K))} \frac{2E(N(1+\sigma)/3m)}{N}
\end{align*}
and the lower bound
\begin{align*}
A_{p_1} &\geq \sum_{m \leq K(1+\sigma)}^* \frac{A \nu_{p_1}(m)\left(1-\frac{2}{\sqrt{N_0/3\min(m,K)}}\right) }{\log((N/3\min(m,K) + N(1+\sigma)/3m)/2)} \left(\frac{1+\sigma}{3m} - \frac{1}{3\min(m,K)}\right) \\
& \quad \quad - \frac{A \nu_{p_1}(m)}{\log(N/3\min(m,K))}  \frac{2E(N(1+\sigma)/3m)}{N}.
\end{align*}
Again, the upper bound is monotone decreasing in $N$, but the lower bound does not have a favorable monotonicity. At $N=N_0$, one can calculate using these bounds and \eqref{delta5-def}, \eqref{alpha5-def} to obtain
\begin{align*}
\delta_5 &\leq 0.06203 \delta; \qquad \alpha_5 \leq 0.31418
\end{align*}
which, when combined with the previous bounds, gives
$$ \sum_{i=1}^8 \delta_i  \leq 0.9740 \delta; \qquad \sum_{i=1}^7 \alpha_i \leq 0.9452$$
at $N=N_0$, thus verifying \eqref{delta-cond}, \eqref{alpha-cond} in those cases.

For $N \geq 10^{70}$, we use the triangle inequality to crudely upper bound
$$ \delta_5 \leq \kappa_{4.5} \sum_{3 < p_1 \leq K} \frac{\log p_1}{\log(t/K^2)} A_{p_1} + B_{p_1}$$
and
$$ \alpha_5 \leq \frac{1}{\log \sqrt{12}} \sum_{3 < p_1 \leq K}  \frac{(\log K^2+\kappa_{**})\log p_1}{\log(t/K^2)} A_{p_1} + (\log p_1 + \kappa_{**}) B_{p_1}.$$
The bounds available for the right-hand side are now monotone in $N$, and one can calculate that
\begin{align*}
  \delta_5 &\leq 0.077301 \delta; \qquad \alpha_5 \leq 0.184975
  \end{align*}
for $N \geq 10^{70}$. This is better than the previous bound for $\alpha_5$.  For $\delta_5$, the bound is slightly worse, but this is more than compensated for by the improved bounds on $\delta_2$, $\delta_7$, and \eqref{delta-cond}, \eqref{alpha-cond} can be verified here with significant room to spare.

This completes the proof of \Cref{main}(iii) (and hence \Cref{main}(ii)) in the cases $N=N_0$ and $N \geq 10^{70}$.  It remains to cover the intermediate range $N_0 < N \leq 10^{70}$.  Here we adopt the perspective of interval arithmetic.  If $N$ is restricted to a given interval, such as $[10^{11}, 5 \times 10^{11}]$, we can use the worst-case upper and lower bounds for $A_{p_1}, B_{p_1}$ to obtain conservative upper bounds on the most delicate quantities
$\delta_5, \delta_7, \alpha_2, \alpha_5$, thus potentially verifying the conditions \eqref{delta-cond}, \eqref{alpha-cond} simultaneously for all $N$ in such an interval.  As it turns out, there is enough room to spare in these estimates, particularly for large $N$, that this strategy works using only a small number of intervals; specifically, by considering $N$ in the intervals
$$ [10^{11}, 5 \times 10^{11}]; \quad [5 \times 10^{11}, 10^{14}]; \quad [10^{14}, 10^{20}]; \quad [10^{20}, 10^{70}]$$
one can check that such bounds are sufficient to verify \eqref{delta-cond}, \eqref{alpha-cond} in these cases.  This now verifies \Cref{main}(ii), (iii) for all $N \geq 10^{11}$.
(In fact, with more effort, this verification can be pushed down to $N \geq 6 \times 10^{10}$ using the same choice of parameters $A,K,L$, specifically by checking $$ [6 \times 10^{10}, 6.05 \times 10^{10}]; \ \  [6.05 \times 10^{10}, 6.1 \times 10^{10}]; \ \  [6.1 \times 10^{10}, 6.5 \times 10^{10}]; \ \  [6.5 \times 10^{10}, 7 \times 10^{10}]; \ \  [7 \times 10^{10}, 10^{11}].$$)

\appendix

\section{Distance to the next \texorpdfstring{$3$}{3}-smooth number}\label{power-sec}

We now establish the various claims in \Cref{power-lemma}.  We begin with part (iii).  The claim \eqref{mod-kappa} is immediate from \eqref{kappa-def}, \eqref{fancy-kappa-def}.  Now prove \eqref{12-2}, \eqref{12-3}.  If we let $\lceil x/12^a \rceil^{\langle 2,3 \rangle} = 2^b 3^c$, then by \eqref{kappa-def} we have
$$ b \log 2 + c \log 3 \leq \log x - a \log 12 + \kappa_L,$$
while from definition of $a$ we have
\begin{equation}\label{xa12}
  \log x - a \log 12 \leq \log(12L).
\end{equation}
We now compute
\begin{align*}
  \frac{\nu_2(\lceil x \rceil^{\langle 2,3\rangle}_L) - 2 \gamma \nu_3(\lceil x \rceil^{\langle 2,3\rangle}_L)}{1-\gamma}
  &= \frac{2a+b - 2\gamma(a+c)}{1-\gamma} \\
  &\leq 2a + \frac{\log x - a \log 12 + \kappa_L}{(1-\gamma) \log 2}  \\
  &= \frac{\log x}{\log \sqrt{12}} + \left( \frac{1}{(1-\gamma)\log 2} - \frac{1}{\log \sqrt{12}}\right) \left(\log x - a \log 12\right)\\
  &\quad
  + \frac{\kappa_L}{(1-\gamma)\log 2}
\end{align*}
giving \eqref{12-2} from \eqref{xa12}; similarly, we have
\begin{align*}
  \frac{2\nu_3(\lceil x \rceil^{\langle 2,3\rangle}_L) - \gamma \nu_2(\lceil x \rceil^{\langle 2,3\rangle}_L)}{1-\gamma}
  &= \frac{2(a+c) - \gamma(2a+b)}{1-\gamma} \\
  &\leq 2a + \frac{2(\log x - a \log 12 + \kappa_L)}{(1-\gamma) \log 3}  \\
  &= \frac{\log x}{\log \sqrt{12}} + \left( \frac{2}{(1-\gamma)\log 3} - \frac{1}{\log \sqrt{12}}\right) \left(\log x - a \log 12\right)\\
  &\quad + \frac{\kappa_L}{(1-\gamma)\log \sqrt{3}}
\end{align*}
giving \eqref{12-3} from \eqref{xa12}.

To prove parts (i) and (ii) of \Cref{power-lemma}, we establish the following lemma to upper bound $\kappa_L$.

\begin{lemma}\label{lemcount-0}  If $n_1,n_2,m_1,m_2$ are natural numbers such that $n_1+n_2, m_1+m_2 \geq 1$ and
$$ \frac{3^{m_1}}{2^{n_1}}, \frac{2^{n_2}}{3^{m_2}} \geq 1$$
then
$$ \kappa_{\min( 2^{n_1+n_2},3^{m_1+m_2})/6} \leq \log \max\left(\frac{3^{m_1}}{2^{n_1}}, \frac{2^{n_2}}{3^{m_2}}\right).$$
\end{lemma}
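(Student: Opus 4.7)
The plan is to verify $\kappa_L \leq \log\rho$ for $L = \min(2^{n_1+n_2}, 3^{m_1+m_2})/6$ and $\rho = \max(r_1,r_2)$ (with $r_1 := 3^{m_1}/2^{n_1}$, $r_2 := 2^{n_2}/3^{m_2}$) by showing that for every $x \geq L$ there is a $3$-smooth integer in $[x,\rho x]$. Under the paper's convention $\N = \{1,2,\dots\}$, all of $n_i, m_i$ are at least $1$; combined with $r_1, r_2 \geq 1$ this actually forces $r_1, r_2 > 1$ strictly (no nontrivial integer solutions to $2^a = 3^b$). I split on the position of $x$ in the sequence of consecutive $3$-smooth numbers $y < y'$: either $y \geq L$ (Case~I), or $y < L \leq y'$ (Case~II).

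For Case~I, the goal is to show the gap ratio $y'/y \leq \rho$. Write $y = 2^a 3^b$. If $a \geq n_1$ then $y\,r_1 = 2^{a-n_1} 3^{b+m_1}$ is a $3$-smooth integer strictly greater than $y$ and at most $\rho y$, forcing $y' \leq \rho y$; symmetrically use $y\,r_2 = 2^{a+n_2} 3^{b-m_2}$ if $b \geq m_2$. The ``problem case'' $a \leq n_1-1$ and $b \leq m_2-1$ would give $y \leq 2^{n_1-1} 3^{m_2-1} = 2^{n_1} 3^{m_2}/6$. But the strict inequalities $r_1, r_2 > 1$ rewrite as $2^{n_1} 3^{m_2} < 3^{m_1+m_2}$ and $2^{n_1} 3^{m_2} < 2^{n_1+n_2}$, so $2^{n_1}3^{m_2}/6 < L$, contradicting $y \geq L$.

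For Case~II, I propose the explicit candidate
\[
y_0 \coloneqq 2^{n_2-1} 3^{m_1-1},
\]
which is a valid $3$-smooth integer since $n_2, m_1 \geq 1$. A direct calculation gives $6\,y_0 = 2^{n_2} 3^{m_1}$, hence $y_0/L$ equals $r_1$ when $6L = 2^{n_1+n_2}$ and equals $r_2$ when $6L = 3^{m_1+m_2}$. In either subcase $y_0/L \in \{r_1, r_2\} \subseteq [1,\rho]$, so $L \leq y_0 \leq \rho L$. Since $y_0$ is a $3$-smooth integer $\geq L$, the minimal such integer $y'$ satisfies $y' \leq y_0 \leq \rho L$, as desired.

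The inventive step of the proof is recognizing that the factor of $6 = 2 \cdot 3$ in the definition of $L$ is exactly calibrated so that both (i)~the problem case of~I produces the bound $y \leq 2^{n_1} 3^{m_2}/6$ that is just below $L$, and (ii)~the candidate $y_0 = 2^{n_2-1} 3^{m_1-1}$ in~II has ratio $y_0/L$ landing on one of the ``allowed moves'' $r_j$. Once this candidate is identified, the remainder reduces to the two elementary comparisons of $2^{n_1} 3^{m_2}$ against $2^{n_1+n_2}$ and $3^{m_1+m_2}$, which amount to restating $r_2 > 1$ and $r_1 > 1$.
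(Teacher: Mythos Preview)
Your proof is correct and follows essentially the same approach as the paper's: both hinge on the explicit $3$-smooth anchor $y_0 = 2^{n_2-1}3^{m_1-1}$ together with the multiplicative moves $r_1, r_2$. The paper organizes the argument dually---splitting on whether $t \le y_0$ and, when $t > y_0$, working with the \emph{successor} $\lceil t\rceil^{\langle 2,3\rangle} = 2^n 3^m$ (showing $n \ge n_2$ or $m \ge m_1$ and then \emph{dividing} by $r_i$ to force $2^n 3^m < \rho t$)---whereas you split on $y \gtrless L$ and \emph{multiply} the predecessor by $r_i$; the two are mirror images of one another.
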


\begin{proof}  If $\min( 2^{n_1+n_2},3^{m_1+m_2})/6 \leq t \leq 2^{n_2-1} 3^{m_1-1}$, then we have
\begin{equation}\label{tb}
  t \leq 2^{n_2-1} 3^{m_1-1} \leq \max\left(\frac{3^{m_1}}{2^{n_1}}, \frac{2^{n_2}}{3^{m_2}}\right) t,
\end{equation}
so we are done in this case.  Now suppose that $t > 2^{n_2-1} 3^{m_1-1}$.
If we write $\lceil t \rceil^{\langle 2,3 \rangle} =2^n 3^m$ be the smallest $3$-smooth number that is at least $t$, then we must have $n \geq n_2$ or $m \geq m_1$ (or both).  Thus at least one of $\frac{2^{n_1}}{3^{m_1}} 2^n 3^m$ and $\frac{3^{m_2}}{3^{n_2}} 2^n 3^m$ is an integer, and is thus at most $t$ by construction.  This gives \eqref{tb}, and the claim follows.
\end{proof}

Some efficient choices of parameters for this lemma are given in \Cref{approx-table}.  For instance, $\kappa_{4.5} \leq \log\frac{4}{3} = 0.28768\dots$ and $\kappa_{40.5} \leq \log \frac{32}{27} = 0.16989\dots$.  In fact, since $\lceil 4.5+\eps \rceil^{\langle 2,3\rangle} = 6$ and $\lceil 40.5+\eps \rceil^{\langle 2,3\rangle} = 48$ for all sufficiently small $\eps>0$, we see that these bounds are sharp (and similarly for the other entries in \Cref{approx-table}); this establishes part (i).

\begin{table}[ht]
\centering
\begin{tabular}{|c|c|c|c|c|c|}
\hline
\rule{0pt}{12pt}$n_1$ & $m_1$ & $n_2$ & $m_2$ & $\min(2^{n_1+n_2},3^{m_1+m_2})/6$ & $\log \max(3^{m_1}/2^{n_1}, 2^{n_2}/3^{m_2})$ \\[2pt]
\hline
\rule{0pt}{12pt}$1$ & $1$ & $\mathbf{1}$ & $\mathbf{0}$ & $1/2 = 0.5$ & $\log 2 = 0.69314\dots$ \\
\hline
\rule{0pt}{12pt}$\mathbf{1}$ & $\mathbf{1}$ & $2$ & $1$ & $2^2/3 = 1.33\dots$ & $\log (3/2) = 0.40546\dots$\\
\hline
\rule{0pt}{12pt}$3$ & $2$ & $\mathbf{2}$ & $\mathbf{1}$ & $3^2/2 = 4.5$ & $\log (2^2/3) = 0.28768\dots$ \\
$3$ & $2$ & $\mathbf{5}$ & $\mathbf{3}$ & $3^4/2 = 40.5$ & $\log (2^5/3^3) = 0.16989\dots$ \\
\hline
\rule{0pt}{12pt}$\mathbf{3}$ & $\mathbf{2}$ & $8$ & $5$ & $2^{10}/3 = 341.33\dots$ & $\log (3^2/2^3) = 0.11778\dots$\\
$\mathbf{11}$ & $\mathbf{7}$ & $8$ & $5$ & $2^{18}/3 = 87381.33\dots$ & $\log (3^7/2^{11}) = 0.06566\dots$ \\
\hline
\rule{0pt}{12pt}$19$ & $12$ & $\mathbf{8}$ & $\mathbf{5}$ & $3^{17}/2 \approx 6.4 \times 10^7$ & $\log (2^8/3^5) = 0.05211\dots$ \\
$19$ & $12$ & $\mathbf{27}$ & $\mathbf{17}$ & $3^{29}/2 \approx 3.4 \times 10^{13}$ & $\log (2^{27}/3^{17}) = 0.03856\dots$ \\
$19$ & $12$ & $\mathbf{46}$ & $\mathbf{29}$ & $3^{41}/2 \approx 1.8 \times 10^{19} $ & $\log (2^{46}/3^{29}) = 0.02501\dots$ \\
\hline
\end{tabular}
\bigskip

\caption{Efficient parameter choices for \Cref{lemcount-0}.  The parameters used to attain the minimum or maximum are indicated in \textbf{boldface}. Note how the number of rows in each group matches the terms $1,1,2,2,3,\dots$ in the continued fraction expansion.}\label{approx-table}
\end{table}

\begin{remark}
It should be unsurprising that the continued fraction convergents $1/1$, $2/1$, $3/2$, $8/5$, $19/12$, $\dots$ to
$$\frac{\log 3}{\log 2} = 1.5849\dots = [1; 1,1,2,2,3,1,\dots]$$
are often excellent choices for $n_1/m_1$ or $n_2/m_2$, although other approximants such as $5/3$ or $11/7$ are also usable.
\end{remark}

Finally, we establish (ii).  From the classical theory of continued fractions, we can find rational approximants
\begin{equation}\label{abn}
 \frac{p_{2j}}{q_{2j}} \leq \frac{\log 3}{\log 2} \leq \frac{p_{2j+1}}{q_{2j+1}}
\end{equation}
to the irrational number $\log 3/\log 2$, where the convergents $p_j/q_j$ obey the recursions
$$ p_j = b_j p_{j-1} + p_{j-2}; \quad q_j = b_j q_{j-1} + q_{j-2}$$
with $p_{-1} = 1, q={-1}=0, p_0 = b_0, q_0=1$, and
$$[b_0;b_1,b_2,\dots] = [1;1,1,2,2,3,1\dots]$$
is the continued fraction expansion of $\frac{\log 3}{\log 2}$.  Furthermore, $p_{2j+1}q_{2j} - p_{2j} q_{2j+1} = 1$, and hence
\begin{equation}\label{abn-2}
  \frac{\log 3}{\log 2} - \frac{p_{2j}}{q_{2j}} = \frac{1}{q_{2j} q_{2j+1}}.
\end{equation}
By Baker's theorem (see, e.g., \cite{baker-wustholz}), $\frac{\log 3}{\log 2}$ is a Diophantine number, giving a bound of the form
\begin{equation}\label{bj1}
   q_{2j+1} \ll q_{2j}^{O(1)}
\end{equation}
and a similar argument (using $p_{2j+2} q_{2j+1}-p_{2j+1} q_{2j+2} = -1$) gives
\begin{equation}\label{bj2}
 q_{2j+2} \ll q_{2j+1}^{O(1)}.
\end{equation}
We can rewrite \eqref{abn} as
$$ \frac{3^{q_{2j}}}{2^{p_{2j}}}, \frac{2^{p_{2j+1}}}{3^{q_{2j+1}}}\geq 1$$
and routine Taylor expansion using \eqref{abn-2} gives the upper bounds
$$ \frac{3^{q_{2j}}}{2^{p_{2j}}}, \frac{2^{p_{2j+1}}}{3^{q_{2j+1}}}\leq \exp\left( O\left( \frac{1}{q_{2j}}\right)\right).$$
From \Cref{lemcount-0} we obtain
$$
\kappa_{\min(2^{p_{2j} + p_{2j+1}}, 3^{q_{2j}+q_{2j+1}})/6} \ll \frac{1}{q_{2j}}.$$
The claim then follows from \eqref{bj1}, \eqref{bj2} (and the fact that $\kappa$ is monotone non-increasing after optimizing in $j$).

\begin{remark}
It seems reasonable to conjecture that $c$ can be taken to be arbitrarily close to $1$, but this is essentially equivalent to the open problem of determining that the irrationality measure of $\log 3 / \log 2$ is $2$.
\end{remark}

\section{Estimating sums over primes}\label{primes-sec}

In this appendix we establish \Cref{osc-lemma}.  The key tool is

\begin{lemma}[Integration by parts]\label{integ-lemma}  Let $(y,x]$ be a half-open interval in $(0,+\infty)$.  Suppose that one has a function $a \colon \N \to \R$ and a continuous function $f: (y,x] \to \R$ such that
  $$ \sum_{y < n \leq z} a_n = \int_z^y f(t)\ dt + C + O_{\leq}(A)$$
  for all $y \leq z \leq x$, and some $C \in \R$, $A>0$.  Then, for any function $b: (y,x] \to \R$ of bounded total variation, one has
\begin{equation}\label{tve}
   \sum_{y < n \leq x} b(n) a_n = \int_x^y b(t) f(t)\ dt + O_{\leq}(A\|b\|_{\mathrm{TV}^*(y,x]}).
\end{equation}
\end{lemma}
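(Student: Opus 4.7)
The plan is to prove this by classical partial (Abel) summation, reducing the bound to the pointwise control on the partial sum error.

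First, set $S(z) \coloneqq \sum_{y < n \leq z} a_n$ and define the error
$$ G(z) \coloneqq S(z) - \int_z^y f(t)\,dt - C, \qquad y \leq z \leq x,$$
so that $|G(z)| \leq A$ uniformly by hypothesis. Letting $z \to y^+$ (and noting $S(y^+) = 0$ and $\int_y^y f = 0$) gives $G(y^+) = -C$, hence $|G(y^+)| \leq A$ as well. The function $G$ is of bounded variation on $(y,x]$, being the sum of a continuous function and a step function.

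Next, assume first that $b$ is $C^1$ on $(y,x]$ with right-limit $b(y^+)$; the general BV case will follow by density. Writing $b(n) = b(x) - \int_n^x b'(t)\,dt$ for each $n$ in the sum, and swapping the order of summation and integration via $\int_n^x = \int_y^x 1_{n \leq t}$, yields
$$ \sum_{y < n \leq x} b(n) a_n = b(x) S(x) - \int_y^x b'(t) S(t)\,dt.$$
Substituting $S(t) = G(t) + C + \int_t^y f(s)\,ds$ and applying the ordinary fundamental theorem of calculus to $\int_y^x b'(t)\bigl(C + \int_t^y f\bigr)\,dt$ (the $C$-piece telescopes, and the $f$-piece gives $\int_x^y b(t) f(t)\,dt$ after standard integration by parts), one obtains the clean identity
$$ \sum_{y < n \leq x} b(n) a_n - \int_x^y b(t) f(t)\,dt = b(x) G(x) - b(y^+) G(y^+) - \int_y^x b'(t) G(t)\,dt.$$
Using $|G| \leq A$ throughout and $\|b\|_{\mathrm{TV}(y,x]} = \int_y^x |b'(t)|\,dt$ for $C^1$ functions, the right-hand side is bounded by $A(|b(x)| + |b(y^+)| + \|b\|_{\mathrm{TV}(y,x]}) = A\|b\|_{\mathrm{TV}^*(y,x]}$, which is exactly \eqref{tve}.

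Finally, to pass from $C^1$ to arbitrary BV $b$, one approximates $b$ by a sequence $b_k$ of $C^1$ functions (say, convolving with a smooth mollifier) with $b_k(x) \to b(x)$, $b_k(y^+) \to b(y^+)$, and $\|b_k\|_{\mathrm{TV}(y,x]} \to \|b\|_{\mathrm{TV}(y,x]}$; both sides of \eqref{tve} are continuous under such approximation, so the bound survives. Alternatively, one can bypass mollification entirely and interpret $\int_y^x b'(t) G(t)\,dt$ as a Lebesgue--Stieltjes integral $\int_{(y,x]} G(t)\,db(t)$, which for BV $b$ is bounded by $\sup|G| \cdot \|b\|_{\mathrm{TV}(y,x]}$. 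There is no genuine obstacle in the argument; the only care needed is bookkeeping for the endpoint terms $b(x) G(x)$ and $b(y^+) G(y^+)$, which is precisely why the \emph{augmented} total variation (absorbing $|b(x)| + |b(y^+)|$) appears in the estimate rather than $\|b\|_{\mathrm{TV}(y,x]}$ alone.
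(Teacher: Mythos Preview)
Your argument is essentially the same as the paper's: both reduce to the identity
\[
\sum_{y<n\le x} b(n)a_n - \int_x^y b f = b(x)G(x) - b(y^+)G(y^+) - \int_{(y,x]} G\,db
\]
and bound the right-hand side by $A\|b\|_{\mathrm{TV}^*(y,x]}$. Your $C^1$ computation is correct.

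There is, however, a small but genuine wrinkle in your passage to general BV $b$. Mollification does not work as stated: if $b$ has a jump at some integer $n\in(y,x]$, then $b_k(n)$ converges to the average of the one-sided limits rather than to $b(n)$, so $\sum b_k(n)a_n$ need not converge to $\sum b(n)a_n$; and if $b$ has an isolated spike, $\|b_k\|_{\mathrm{TV}}$ can be strictly smaller than $\|b\|_{\mathrm{TV}}$. The direct Lebesgue--Stieltjes route also needs care: when $b$ and $G$ share a discontinuity (i.e., $b$ jumps at an integer), the integration-by-parts formula acquires a correction term $\sum_n \Delta b(n)\,\Delta G(n)$, which you do not account for.

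The paper handles this with a one-line preprocessing step that you could adopt: replace $b$ by a function equal to the constant $b(n)$ on a small neighborhood of each integer $n\in(y,x]$. This leaves the sum $\sum b(n)a_n$ unchanged, perturbs the integral $\int bf$ by an arbitrarily small amount, and (by the triangle inequality applied locally) does not increase $\|b\|_{\mathrm{TV}^*}$. After this modification, $b$ and $G$ have disjoint discontinuity sets, and Riemann--Stieltjes integration by parts applies without correction terms.
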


\begin{proof}  If, for every natural number $y < n \leq x$, one modifies $b$ to be equal to the constant $b(n)$ in a small neighborhood of $n$, then one does not affect the left-hand side of \eqref{tve} or increase the total variation of $b$, while only modifying the integral in \eqref{tve} by an arbitrarily small amount.  Hence, by the usual limiting argument, we may assume without loss of generality that $b$ is locally constant at each such $n$.  If we define the function $g \colon (y,x] \to \R$ by
$$ g(z) \coloneqq  \sum_{y < n \leq z} a_n - \int_z^y f(u)\ du - C$$
then $g$ has jump discontinuities at the natural numbers, but is otherwise continuously differentiable, and is also bounded uniformly in magnitude by $A$.  We can then compute the Riemann--Stieltjes integral
$$ \int_{(y,x]} b\ dg = \sum_{y < n \leq x} b(n) a_n - \int_y^x f(t) b(t)\ dt.$$
Since the discontinuities of $g$ and $b$ do not coincide, we may integrate by parts to obtain
$$ \int_{(y,x]} b\ dg = b(x) g(x) - b(y^+) g(y^+) - \int_{(y,x]} g\ db.$$
The left-hand side is $O_{\leq}(A \|b\|_{\mathrm{TV}^*(y,x]})$, and the claim follows.
\end{proof}

We now prove \eqref{bv-exact}. In fact we prove the sharper estimate
\begin{equation}\label{bsharp}
  \sum_{y < p \leq x} b(p) \log p = \int_y^x b(t) \left(1 - \frac{2}{\sqrt{t}}\right) \ dt
+ O_\leq\left(\|b\|_{\mathrm{TV}^*((y,x])} \tilde E(x) \right)
\end{equation}
where
\begin{equation}\label{etil-def}
\tilde E(x) \coloneqq 0.95 \sqrt{x} + \min( \max(\eps_0,\eps_1(x)), \eps_2(x), \eps_3(x)) 1_{x \geq 10^{19}}
\end{equation}
and
\begin{align*}
  \eps_0(x) &\coloneqq \frac{\sqrt{x}}{8\pi} \log x(\log x - 3),\\
  \eps_1(x) &\coloneqq 1.12494 \times 10^{-10},\\
  \eps_2(x) &\coloneqq 9.39 (\log^{1.515} x) \exp(-0.8274\sqrt{\log x}),\text{ and}\\
  \eps_3(x) &\coloneqq 0.026 (\log^{1.801} x) \exp(-0.1853 (\log^{3/5} x) (\log\log x)^{-1/5}).
\end{align*}
From using the $\eps_2$ term, it is clear that
$$ \tilde E(x) \ll x \exp(-c \sqrt{\log x})$$
for some absolute constant $c>0$; and by using the $\eps_0$ and $\eps_1$ terms and routine calculations one can show that
$$ \tilde E(x) \leq E(x)$$
for all $x \geq 1423$.

Observe that $\tilde E$ is monotone non-decreasing. Thus by \Cref{integ-lemma}, to show \eqref{bsharp}, it will suffice to show that
$$ \sum_{p \leq x} \log p = x - \sqrt{x} + O_{\leq}(\tilde E(x))
  = \int_0^x \left(1-\frac{2}{\sqrt{t}}\right)\ dt + O_{\leq}(\tilde E(x))$$
for all $x \geq 1423$.

For $1423 \leq x \leq 10^{19}$, this claim follows from \cite[Theorem 2]{buthe-2}.  For $x > 10^{19}$, we apply \cite[(6.10), (6.11)]{buthe} to conclude that
$$
\sum_{p \leq x} \log p = \psi(x) - \psi(\sqrt{x}) + O_{\leq}(1.03883 (x^{1/3} + x^{1/5} + 2 (\log x) x^{1/13})),$$
where $\psi(x) \coloneqq \sum_{n \leq x}
\Lambda(n)$ is the usual von Mangoldt summatory function.
From \cite[Theorems 10,12]{rs} we have
$$ \psi(\sqrt{x}) = \sqrt{x} + O_{\leq}(0.18 \sqrt{x}).$$
Since
$$ 0.18 \sqrt{x} + 1.03883 (x^{1/3} + x^{1/5} + 2 (\log x) x^{1/13}) \leq 0.95 \sqrt{x}$$
in this range of $x$, it suffices to show that
$$ \psi(x) = x + O_{\leq}(\min( \max(\eps_0(x),\eps_1(x)), \eps_2(x), \eps_3(x)) )$$
for $x > 10^{19}$.  The claims for $i=2,3$ follow from \cite[Theorems 1.1, 1.4]{johnston-yang}.  In \cite[Theorem 2, (7.3)]{buthe}, the bound
$$ \psi(x) = x + O_{\leq}(\eps_0(x))$$
is established whenever $x \geq 5000$ and $4.92 \frac{x}{\sqrt{\log x}} \leq T$, where $T$ is a height up to which the Riemann hypothesis has been established.  Using the value $T = 3 \times 10^{12}$ from \cite{platt-rh}, we can therefore cover the range $10^{19} < x < e^{55}$ (in fact we could go up to $e^{58.33} \approx 2.1 \times 10^{25}$).  For $x \geq e^{55}$, we can use \cite[Table 2]{buthe} (the value $T = 2.445 \times 10^{12}$ used there following from \cite{platt-rh}).

\begin{remark} Assuming the Riemann hypothesis, the $\eps_1, \eps_2, \eps_3$ terms in the definition of $\tilde E(x)$ may be deleted, since \cite[(7.3)]{buthe} then holds for all $x \geq 5000$.
\end{remark}

The claim \eqref{pix} now follows from \eqref{bv-exact} by setting $b(t) \coloneqq \frac{1}{\log t}$.  The bounds \eqref{pixy-upper}, \eqref{pixy-lower} then follow by estimating
$$ 1-\frac{2}{\sqrt{y}} \leq 1-\frac{2}{\sqrt{t}} \leq 1$$
and using the convexity of $t \mapsto \frac{1}{\log t}$.  Finally, for non-negative $b$, the bounds \eqref{bv-upper}, \eqref{bv-lower} follow from the trivial inequalities
$$ \frac{b(p) \log p}{\log x}  \leq b(p) \leq \frac{b(p) \log p}{\log y}.$$

\section{Computation of \texorpdfstring{$c_0$}{c\_0} and related quantities}\label{c0-app}

In this appendix we give some details regarding the numerical estimation of the constants $c_0, c'_1, c''_1, c_1$ defined in \eqref{c0-def}, \eqref{c1p-def}, \eqref{c1pp-def}, \eqref{c1-def}.

We begin with $c_0$.  As one might imagine from an inspection of \Cref{fig-mean}, direct application of numerical quadrature converges quite slowly due to the oscillatory singularity.  To resolve the singularity, we can perform a change of variables $x=1/y$ to express $c_0$ as an improper integral:
\begin{equation}\label{c0-alt}
   c_0 = \frac{1}{e} \int_1^\infty \lfloor y \rfloor \log \frac{\lceil y/e \rceil}{y/e}\ \frac{dy}{y^2}.
\end{equation}
Next, observe\footnote{We thank an anonymous commenter on the blog of one of the authors for this suggestion.} that
\begin{align*}
  \frac{1}{e} \int_e^\infty y \log \frac{\lceil y/e \rceil}{y/e}\ \frac{dy}{y^2}
  &= \sum_{k=1}^\infty \int_{ke}^{(k+1)e} y \log \frac{k+1}{y/e}\ \frac{dy}{y^2} \\
  &= \frac{1}{e} \sum_{k=1}^\infty \int_k^{k+1} \left(\log(k+1)-\log y\right)\ \frac{dy}{y}\\
  &= \frac{1}{2e} \sum_{k=1}^\infty \log^2 \left(1 + \frac{1}{k}\right)\\
  &= 0.1797439053\dots;
\end{align*}
The value here was computed in interval arithmetic by subtracting off the asymptotically similar sum $\frac{1}{2e} \sum_{k=1}^\infty \frac{1}{k^2} = \frac{1}{2e} \frac{\pi^2}{6}$, summing the resulting partial sum up to $k=10^5$, and bounding the tail of the sum rigorously. We have
$$ \frac{1}{e} \int_1^e \lfloor y \rfloor \log \frac{e}{y}\ \frac{dy}{y^2} = \frac{2}{e^2} - \frac{\log 2}{2e} = 0.143173268\dots$$
and hence
$$ c_0 = \frac{1}{2e} \sum_{k=1}^\infty \log^2 \left(1 + \frac{1}{k}\right)
+ \frac{2}{e^2} - \frac{\log 2}{2e} - \frac{1}{e} \int_e^\infty \{y\} \log \frac{\lceil y/e \rceil}{y/e}\ \frac{dy}{y^2}$$
where $\{x\} \coloneqq x - \lfloor x \rfloor$.  The integrand here lies between $0$ and $1/y^3$, so the integral for $y \geq T$ lies between $0$ and $1/2T^2$.  Truncating to say $T = 10^5$ and performing the integral exactly, one can evaluate
$$ \frac{1}{e} \int_e^\infty \{y\} \log \frac{\lceil y/e \rceil}{y/e}\ \frac{dy}{y^2} = 0.018498162\dots$$
so that
$$ c_0 = 0.30441901\dots.$$
A similar calculation (which we omit) reveals that
\begin{align*}
  c'_1 &= \sum_{k=1}^\infty \frac{1+\log(k+1)}{2e}  \log^2\left(1+\frac{1}{k}\right) - \frac{1}{3e} \log^3\left(1+\frac{1}{k}\right)  \\
  &\quad + \frac{6}{e^2} - \frac{\log^2 2 + \log 2 + 3}{2e}\\
 &\quad - \frac{1}{e} \int_e^\infty \{y\} (\log y) \log \frac{\lceil y/e \rceil}{y/e}\ \frac{dy}{y^2} \\
 &\approx 0.3702051\dots.
\end{align*}
Computing the sum $c''_1$ to reasonable accuracy requires some further analysis.  From the crude bound
$$0 \leq \frac{1}{k} \log\left(\frac{e}{k} \left\lceil \frac{k}{e} \right\rceil\right) \leq \frac{e}{k^2}$$
and the integral test, one has the simple tail bound
$$ 0 \leq \sum_{k=K+1}^\infty \frac{1}{k} \log\left(\frac{e}{k} \left\lceil \frac{k}{e} \right\rceil\right) \leq \frac{e}{K}$$
but the convergence rate here is slow.  To accelerate the convergence, we write $\lceil\frac{k}{e} \rceil = \frac{k}{e} + \{-\frac{k}{e}\}$ and use the more precise Taylor approximation
$$ \frac{e \left\{-\frac{k}{e}\right\}}{k^2} - \frac{e^2 \left\{-\frac{k}{e}\right\}^2}{2k^3}
\leq \frac{1}{k} \log\left(\frac{e}{k} \left\lceil \frac{k}{e} \right\rceil\right) \leq \frac{e \left\{-\frac{k}{e}\right\}}{k^2}.$$
Bounding $\{-k/e\}$ by one, we have the tail bound
$$ 0 \leq \sum_{k=K+1}^\infty \frac{e^2 \left\{-\frac{k}{e}\right\}^2}{2k^3} \leq \frac{e^2}{4K^2}$$
so the main task is then to control the simplified tail
$$ \sum_{k=K+1}^\infty \frac{e\left\{-\frac{k}{e}\right\}}{k^2}.$$
From the integral test one has
$$ \frac{e}{2(K+1)} \leq \sum_{k=K+1}^\infty \frac{\frac{e}{2}}{k^2} \leq \frac{e}{2K}$$
so one can instead look at the normalized tail
$$ \sum_{k=K+1}^\infty e\frac{\left\{-\frac{k}{e}\right\}-\frac{1}{2}}{k^2}.$$
The Erd\H{o}s--Tur\'an inequality states that, for any absolutely convergent non-negative weights $c_k$, any interval $I \subset [0,1]$ of length $|I|$, and any real numbers $\xi_k$, and any $N \geq 1$, one has
$$ \left|\sum_k c_k (1_I(\xi_k \mod 1) - |I|)\right| \leq \frac{1}{N+1} \sum_k c_k + \sum_{n=1}^N \left(\frac{2}{\pi n} + \frac{2}{N+1}\right) \left|\sum_k c_k e^{2\pi i n \xi_k}\right|;$$
see the inequality after \cite[Theorem 20]{vaaler}.  (In this reference, only the special case in which $c_k$ is a uniform probability distribution function on $\{1,\dots,M\}$ is discussed, but it is easy to see that the argument in fact works for arbitrary absolutely convergent non-negative weights $c_k$.)  Applying this for $I = [0,h]$ and then averaging in $h$ from $0$ to $1$, we conclude that
$$ \left|\sum_k c_k \left(\{\xi_k\}-\frac{1}{2}\right)\right| \leq \frac{1}{N+1} \sum_k c_k + \sum_{n=1}^N \left(\frac{2}{\pi n} + \frac{2}{N+1}\right) \left|\sum_k c_k e^{2\pi i n \xi_k}\right|.$$
In particular, we have
$$ \left| \sum_{k=K+1}^\infty e\frac{\left\{-\frac{k}{e}\right\}-\frac{1}{2}}{k^2}\right| \leq \frac{1}{N+1} \sum_{k=K+1}^\infty \frac{e}{k^2}
+ \sum_{n=1}^N \left(\frac{2e}{\pi n} + \frac{2e}{N+1}\right) \left|\sum_{k=K+1}^\infty \frac{e^{-2\pi i n k/e}}{k^2}\right|.
$$
To estimate the exponential sum
$$ S_{n,K} \coloneqq \sum_{k=K+1}^\infty \frac{e^{-2\pi i n k/e}}{k^2}$$
observe from shifting $k$ by one that
$$ S_{n,K} = e^{-2\pi i n/e} \sum_{k=K}^\infty \frac{e^{-2\pi i n k/e}}{(k+1)^2}
= e^{-2\pi i n/e} S_{n,K} + O_{\leq}\left( \frac{1}{(K+1)^2} + \sum_{k=K+1}^2 \frac{1}{k^2} -\frac{1}{(k+1)^2}\right)$$
and hence on summing the telescoping series
$$ |S_{n,K}| \leq \frac{2}{|e^{-2\pi i n/e}-1| (K+1)^2} = \frac{1}{(K+1)^2 \sin(\pi n/e)}.$$
Because the irrationality measure of $e$ is $2$, this will give error terms of the shape $O(\log K/K^2)$ if one sets $N \approx K / \log K$.  Setting for instance $K = 10^6$, $N = 10^5$, an interval arithmetic computation then gives
$$c''_1 = 1.679578996\dots$$
and thus by \eqref{c1-def} we have
$$ c_1 = 0.7554808\dots.$$
This concludes our discussion of the numerical estimation of $c_0, c'_1, c''_1, c_1$.

\end{document}